\numberwithin{equation}{section}
\newtheorem{Theorem}{Theorem}[section]
\newtheorem*{Theorem*}{Theorem}
\newtheorem{Lemma}[Theorem]{Lemma}
\newtheorem{Proposition}[Theorem]{Proposition}
 { \theoremstyle{definition}

\newtheorem*{Example}{Example}
\newtheorem*{Remark}{Remark} }
\newcommand*{\rom}[1]{\expandafter\@slowromancap\romannumeral #1@}
\DeclareMathOperator{\Sym}{Sym }
\DeclareMathOperator{\Symb}{\overline{Sym} }
\newcommand{\id}{\mathrm{id}}
\DeclareMathOperator{\End}{End}
\newcommand\ltwo{\langle 2\rangle}
\begin{document}
\allowdisplaybreaks

\newcommand{\arXivNumber}{2312.00980}

\renewcommand{\PaperNumber}{060}

\FirstPageHeading

\ShortArticleName{New Combinatorial Formulae for Nested Bethe Vectors}

\ArticleName{New Combinatorial Formulae \\ for Nested Bethe Vectors}

\Author{Maksim KOSMAKOV~$^{\rm a}$ and Vitaly TARASOV~$^{\rm b}$}
\AuthorNameForHeading{M.~Kosmakov and V.~Tarasov}

\Address{$^{\rm a)}$~Department of Mathematical Sciences, University of Cincinnati,\\
\hphantom{$^{\rm a)}$}~P.O.~Box 210025, Cincinnati, OH 45221, USA}
\EmailD{\href{mailto:kosmakmm@ucmail.uc.edu}{kosmakmm@ucmail.uc.edu}}

\Address{$^{\rm b)}$~Department of Mathematical Sciences, Indiana University Indianapolis,\\
\hphantom{$^{\rm b)}$}~402 North Blackford St, Indianapolis, IN 46202-3216, USA}
\EmailD{\href{mailto:vtarasov@iu.edu}{vtarasov@iu.edu}}

\ArticleDates{Received January 08, 2025, in final form July 08, 2025; Published online July 22, 2025}

\Abstract{We give new combinatorial formulae for vector-valued weight functions (off-shell nested Bethe vectors) for the evaluation modules over the Yangian $Y(\mathfrak{gl}_4)$. The case of~$Y(\mathfrak{gl}_n)$ for an arbitrary $n$ is considered in [\textit{Lett. Math. Phys.} \textbf{115} (2025), 12, 20~pages, arXiv:2402.15717].}

\Keywords{Bethe ansatz; Yangian; weight functions}

\Classification{17B37; 81R50; 82B23}

\section{Introduction}
In this paper, we will give new combinatorial formulae for vector-valued weight functions for
evaluation modules over the Yangian $Y(\mathfrak{gl}_n)$. The weight functions, also known as
(off-shell) nested Bethe vectors, play an important role in the theory of quantum integrable
models and representation theory of Lie algebras and quantum groups. Initially, they appeared
in the framework of the nested algebraic Bethe ansatz as a tool to find eigenvectors and
eigenvalues of transfer matrices of lattice integrable models associated with higher rank Lie
algebras \cite{KulRes83, KulRes82}, see \cite{S1, S2} for a review of the algebraic Bethe ansatz.
The results of \cite{KulRes83} have been extended to higher transfer matrices in \cite{MTV1}.

Furthermore, the vector-valued weight functions were used to construct hypergeometric solutions
of the quantized (difference) Knizhnik--Zamolodchikov equations \cite{MTT, TV}. They also showed
up in several related problems \cite{KPT, MTV2, TV3, TV4}. In a more recent development,
the weight functions were connected to the stable envelopes for particular Nakajima quiver
varieties, the cotangent bundles of partial flag varieties \cite{RTV1, RTV2, RTV3, RTV4, TV5}.

For various applications, it is important to have expressions for vector-valued weight functions
for tensor products of evaluation modules over $Y(\mathfrak{gl}_n)$. Such expressions can be
obtained in two steps. The first step is to consider weight functions for a single evaluation
module, and the second step is to combine expressions for individual evaluation modules
into an expression for the whole tensor product. In this paper, we will focus on the first step.
The second step is fairly standard and is not specifically discussed here.

By definition, an evaluation $Y(\mathfrak{gl}_n)$-module is a $\mathfrak{gl}_n$-module
equipped with the action of~$Y(\mathfrak{gl}_n)$ via the evaluation homomorphism
$Y(\mathfrak{gl}_n)\rightarrow U(\mathfrak{gl}_n)$, see Section~\ref{notation}. The goal is
to expand the vector-valued weight function for the evaluation $Y(\mathfrak{gl}_n)$-module
in a basis coming from the representation theory of $\mathfrak{gl}_n$ and find expressions
for the coordinates. For Verma modules over~$\mathfrak{gl}_n$, expressions of this kind are given
in~\cite{TVC}. In this paper, we give a generalization of formulae from~\cite{TVC}.

Combinatorial formulae for the vector-valued weight functions associated with the differential
Knizhnik--Zamolodchikov equations were developed in \cite{FRV,MV, M,RSV,SV1,SV2}.

The expressions for weight functions in \cite{TVC} are based on recursions
induced by the standard embeddings of Lie algebras,
$\mathfrak{gl}_1\oplus \mathfrak{gl}_{n-1}\subset \mathfrak{gl}_n$ and
$\mathfrak{gl}_{n-1}\oplus \mathfrak{gl}_1 \subset \mathfrak{gl}_n$. The recursions
allow one to write down weight functions for $Y(\mathfrak{gl}_n)$ via weight functions for
$Y(\mathfrak{gl}_{n-1})$. This results in~formulae for coordinates of weight functions in bases
of Verma $\mathfrak{gl}_n$-modules of the form
\begin{equation}\label{basis}
\biggl\{ \prod_{i>j} e_{ij}^{m_{ij}}v, \,
m_{ij}\in\mathbb{Z}_{\geq0}\biggr\},
\end{equation}
where $e_{ij}$ are the standard generators of $\mathfrak{gl}_n$, see~\eqref{eijkl},
$v$ is the highest weight
vector, and some ordering of noncommuting factors is imposed. The ordering is determined
by the in-between part of the involved chain of embeddings
$\mathfrak{gl}_1 \oplus\dots\oplus \mathfrak{gl}_1\subset\dots\subset\mathfrak{gl}_n$.
For instance, the chain
\[\mathfrak{gl}_1 \oplus\dots\oplus \mathfrak{gl}_1\subset
\dots\subset\mathfrak{gl}_{n-2}\oplus \mathfrak{gl}_1 \oplus \mathfrak{gl}_1 \subset
\mathfrak{gl}_{n-1}\oplus \mathfrak{gl}_1 \subset \mathfrak{gl}_n
\]
yields the ordering
\begin{gather}\label{eq1-add}
\text{$e_{ij}^\circledast$ is to the left of $e_{kl}^\circledast$ if $i>k$ or $i=k$, $j>l$,}
\end{gather}
while the chain
\[
\mathfrak{gl}_1 \oplus\dots\oplus \mathfrak{gl}_1
\subset\dots\subset\mathfrak{gl}_1 \oplus \mathfrak{gl}_1\oplus\mathfrak{gl}_{n-2} \subset
\mathfrak{gl}_1\oplus \mathfrak{gl}_{n-1}\subset \mathfrak{gl}_n
\]
yields the ordering
\begin{gather}\label{eq2-add}
\text{$e_{ij}^\circledast$ is to the left of $e_{kl}^\circledast$ if $j<l$ or $j=l$, $i<k$.}
\end{gather}
For example, for $n=4$, the product $e_{43}e_{42}e_{41}e_{32}e_{31}e_{21}$ obeys ordering~\eqref{eq1-add}, while the product $e_{21}e_{31}e_{41}e_{32}e_{42}e_{43}$ obeys ordering~\eqref{eq2-add}.

However, some natural orderings of noncommuting factors in~\eqref{basis} important
for applications do not show up in the formulae established in \cite{TVC},
see, for instance, \cite{MV}.
The first nontrivial example occurs at $n=4$ and is given by the basis
\begin{equation}\label{basis4}
\bigl\{e_{32}^{m_{32}}e_{31}^{m_{31}}e_{42}^{m_{42}}e_{41}^{m_{41}}e_{21}^{m_{21}}e_{43}^{m_{43}}v,\,
 m_{ij}\in\mathbb{Z}_{\geq0}\bigr\}.
\end{equation}
To make the set of covered orderings wider, one can consider recursions based on more general
embeddings
\begin{equation}\label{newembed}
\mathfrak{gl}_m\oplus\mathfrak{gl}_{n-m}\subset \mathfrak{gl}_n \qquad \text{with}\ 1<m<n-1.
\end{equation}
For instance, the embedding
$\mathfrak{gl}_2\oplus \mathfrak{gl}_2\subset \mathfrak{gl}_4$ yields example~\eqref{basis4}.
In this paper, we will work out example~\eqref{basis4} in detail with the main result
given by Theorem~\ref{main2}. We consider the general case in \cite{KT}.

We would like to present the $\mathfrak{gl}_4$ case separately in order to explain calculations
more clearly without introducing too cumbersome notation and to make the exposition more
accessible. For the same purpose, we show explicitly intermediate steps in the proofs that
might commonly be omitted for the sake of making a paper shorter. In particular, we give in
Appendix~\ref{appendixA} a proof of Proposition~\ref{twotensor}. Although this statement has a long history,
going back to \cite{Kor}, numerous applications, and is explained in several lecture courses,
see~\cite{S2}, its straightforward proof is not easily available in the literature.

At the same time we point out that the proof of Theorem~\ref{main2} in this paper extends
almost in a straightforward way to the proof of \cite[Theorem~5.5]{KT} in the general
$\mathfrak{gl}_n$ case. In particular, the proof of the key Proposition~6.3 in \cite{KT}
is literally the same as the proof of Proposition~\ref{symprop} in~this~paper.

Unlike \cite{TVC}, we will consider only the case of weight functions for Yangian modules
(the rational case). It turns out that dealing with weight functions for modules over
the quantum loop algebra \smash{$U_q\bigl(\widetilde{\mathfrak{gl}_n}\bigr)$}, the trigonometric case,
hits an obstacle of essential noncommutativity of $q$-analogues of the generators
$e_{ij}$, $i>j$. This obstacle does not show up for the embeddings
$\mathfrak{gl}_1\oplus \mathfrak{gl}_{n-1}\subset \mathfrak{gl}_n$ and
$\mathfrak{gl}_{n-1}\oplus \mathfrak{gl}_1 \subset \mathfrak{gl}_n$ explored in \cite{TVC},
but reveals itself for embeddings~\eqref{newembed}. For instance, the obstacle
in example~\eqref{basis4} comes from the relation
\begin{equation*}
e_{42}e_{31}-e_{31}e_{42}=\bigl(q-q^{-1}\bigr)e_{32}e_{41}
\end{equation*}
that holds in the trigonometric case.

There is an alternative approach to get explicit expressions for the vector-valued weight
functions in the trigonometric case, see \cite{KP,KPT, OPS}, based on considering composed
currents and half-currents in the quantum affine algebra and their projections on two Borel
subalgebras of different kinds. This approach allows one to recover the combinatorial expressions
for vector-valued weight functions in evaluation modules in the trigonometric case obtained
in \cite{TVC}. It is an~interesting open question whether the composed currents approach
can be helpful to obtain trigonometric analogues of new combinatorial expressions for
vector-valued weight functions developed in this paper.

\section{Notations}\label{notation}
We will be using the standard superscript notation for embeddings of tensor factors into tensor products. For a tensor product of vector spaces $V_1\otimes V_2\otimes\dots\otimes V_k$ and an operator $A\in \End (V_i)$, denote
\begin{equation*}
A^{(i)}=1^{\otimes(i-1)} \otimes A \otimes 1^{\otimes(k-i)} \in \End(V_1\otimes V_2\otimes\dots\otimes V_k). \end{equation*}
Also, if $B\in \End (V_j)$, $i\neq j$, denote $(A\otimes B)^{(ij)}=A^{(i)}B^{(j)}$, etc.

Fix a positive integer $n$. Throughout the paper, we identify elements of End $\mathbb{C}^n$ with $n\times n$ matrices using the standard basis of $\mathbb{C}^n$. That is, for $L\in \End \mathbb{C}^n$ we have $ L= \bigl( L_b^a \bigr)_{a,b=1}^n$, where~$L_b^a$ are the entries of $L$. Entries of matrices acting in the tensor products \smash{$ (\mathbb{C}^n )^{\otimes k}$} are naturally labeled by multiindices. For instance, if \smash{$M\in \End(\mathbb{C}^n\otimes \mathbb{C}^n)$}, then \smash{$M= \bigl( M_{cd}^{ab} \bigr)_{a,b,c,d=1}^n$}.

The rational $R$-matrix is $R(u)\in \End (\mathbb{C}^n\otimes \mathbb{C}^n)$,
\begin{equation}\label{rmatrix}
R(u)=1+\frac{1}{u}\sum_{a, b=1}^{n} E_{a b} \otimes E_{b a},
\end{equation}
where $ E_{a b} \in \End\left(\mathbb{C}^{n}\right) $ is the matrix with the only nonzero entry equal to 1 at the intersection of the $a$-th row and $b$-th column. The entries of $R(u)$ are
\begin{equation*}
R^{ab}_{cd}(u)= \delta_{ac}\delta_{bd} +\frac{1}{u}\delta_{ad}\delta_{bc}.
\end{equation*}
The $R$-matrix satisfies the Yang--Baxter equation
\begin{equation}\label{YB} R^{(12)}(u-v) R^{(13)}(u) R^{(23)}(v)=R^{(23)}(v) R^{(13)}(u) R^{(12)}(u-v).\end{equation}

The Yangian $Y(\mathfrak{g l}_{n})$ is an unital associative algebra with generators \smash{$ \bigl(T^a_b\bigr)^{\{s\}}$}, $a, b=1, \dots, n, $ and $s=1,2, \dots$. Organize them into generating series
\begin{equation}\label{toperator} T^a_b(u)=\delta_{ab}+\sum_{s=1}^{\infty} \bigl(T^a_b\bigr)^{\{s\}} u^{-s}, \qquad a, b=1, \dots, n.\end{equation}
The defining relations in $Y(\mathfrak{g l}_{n})$ are
\begin{equation} (u-v)\bigl[T^a_b(u), T^c_d(v)\bigr]=T^a_d(u)T^c_b(v) -T^a_d(v)T^c_b(u) \label{trel}\end{equation}
for all $a, b, c, d=1, \dots, n$.

Combine series $\eqref{toperator}$ into a matrix $T(u)=\sum_{a, b=1}^{n} E_{a b} \otimes T_{ b}^a(u)$ with entries in $ Y(\mathfrak{g l}_{n})$. Then relations~\eqref{trel} amount to the following equality:
\begin{equation*}
R^{(12)}(u-v) T^{(1)}(u) T^{(2)}(v)=T^{(2)}(v) T^{(1)}(u) R^{(12)}(u-v),
\end{equation*}
where $T^{(1)}(u)=\sum_{a, b=1}^{n} E_{a b}\otimes 1 \otimes T_{ b}^a(u)$ and $T^{(2)}(v)=\sum_{a, b=1}^{n} 1 \otimes E_{a b}\otimes T_{ b}^a(v)$.

The Yangian $Y(\mathfrak{g l}_{n})$ is a Hopf algebra. In terms of generating series $\eqref{toperator}$, the coproduct $\Delta\colon Y(\mathfrak{g l}_{n}) \rightarrow Y(\mathfrak{g l}_{n}) \otimes Y(\mathfrak{g l}_{n})$ reads as follows:
\begin{equation}\label{coproduct}
\Delta\bigl(T_{ b}^a(u)\bigr)=\sum_{c=1}^{n} T^c_{ b}(u) \otimes T^a_{ c}(u), \qquad a, b=1, \dots, n.
\end{equation}
Denote by $\widetilde{\Delta}\colon Y(\mathfrak{g l}_{n}) \rightarrow Y(\mathfrak{g l}_{n}) \otimes Y(\mathfrak{g l}_{n})$ the opposite coproduct
\begin{equation}\label{opcoproduct}
\widetilde{\Delta}\bigl(T_{ b}^a(u)\bigr)=\sum_{c=1}^nT^a_c(u)\otimes T^c_b(u), \qquad a, b=1, \dots, n.
\end{equation}
There is a one-parameter family of automorphisms $\rho_x\colon Y(\mathfrak{gl}_n) \rightarrow Y(\mathfrak{gl}_n)$ defined in terms of the series $T(u)$ by the rule $\rho_x T(u)= T(u-x)$, where, on the right-hand side, each expression $(u-x)^{-s}$ has to be expanded as a power series in $u^{-1}$.

Denote by $e_{ab}$, $a,b=1,\dots,n$, the standard generators of the Lie algebra $\mathfrak{gl}_n$,
\begin{equation}\label{eijkl}
[e_{ab},e_{cd}]=e_{ad} \delta_{bc}-e_{cb} \delta_{ad}.
\end{equation}
A vector $v$ in a $\mathfrak{gl}_n$-module is called
singular of weight $\bigl(\Lambda^1,\dots,\Lambda^n\bigr)$ if $e_{ab}v = 0$ for all $a<b$ and~${e_{aa}v=\Lambda^av}$ for all $a=1,\dots,n$.

The Yangian $Y(\mathfrak{g l}_{n})$ contains the universal enveloping algebra $U(\mathfrak{g l}_{n})$ as a Hopf subalgebra. The embedding is given by the rule \smash{$e_{a b} \mapsto \bigl(T_{ a}^b\bigr)^{\{1\}}$} for all $a, b=1, \dots, n$. We identify $U(\mathfrak{g l}_{n})$ with its image in $Y(\mathfrak{g l}_{n})$ under this embedding.

The evaluation homomorphism $\epsilon\colon Y (\mathfrak{gl}_n) \rightarrow U(\mathfrak{gl}_n)$ is given by the rule $\epsilon\colon T_{b}^a(u) \mapsto \delta_{ab}+e_{ba}u^{-1}$ for all $a,b = 1,\dots,n$. Both the automorphisms $\rho_x$ and the homomorphism $\epsilon$ restricted to the subalgebra $U(\mathfrak{gl}_n)$ are the identity maps.

For a $\mathfrak{gl}_n$-module $V$, denote by $V(x)$ the $Y(\mathfrak{gl}_n)$-module induced from $V$ by the homomorphism~${\epsilon\circ \rho_x}$. The module $V(x)$ is called an evaluation module over $Y(\mathfrak{gl}_n)$.

A vector $v$ in a $Y(\mathfrak{gl}_n)$-module is called singular with respect to the action of $Y(\mathfrak{gl}_n)$ if ${T^{a}_{b}(u)v = 0}$ for all $1\leq b < a \leq n$. A singular vector $v$ that is an eigenvector for the action of $T_{1}^{1}(u),\dots,T_{n}^{n}(u)$ is called a weight singular vector, and the respective eigenvalues are denoted by $\bigl\langle T^1_{1}(u)v\bigr\rangle,\dots, \langle T^n_{n}(u)v\rangle$.

\begin{Example}
Let $V$ be a $\mathfrak{g l}_{n}$-module and $v \in V$ be a $\mathfrak{gl}_n$-singular vector of weight $\bigl(\Lambda^{1}, \dots, \Lambda^{n}\bigr)$. Then $v$ is a weight singular vector with respect to the action of $Y(\mathfrak{g l}_{n})$ in the evaluation module~$V(x)$ and $\langle T^a_{ a}(u) v\rangle=1+\Lambda^{a}(u-x)^{-1}$, $a=1, \dots, n$.
\end{Example}

For $k<n$, we consider two embeddings of the algebra $Y(\mathfrak{gl}_{k})$ into $Y(\mathfrak{gl}_{n})$, called $\phi_k$ and $\psi_k$:
\begin{gather}\label{embeddings}
\phi_k\bigl(T^{\langle k\rangle}(u)\bigr)^a_b=\bigl(T^{\langle n\rangle}(u)\bigr)^a_b,\qquad
\psi_k\bigl(T^{\langle k\rangle}(u)\bigr)^a_b=\bigl(T^{\langle n\rangle}(u)\bigr)^{a+n-k}_{b+n-k},
\end{gather}
with $a,b=1,\dots, k$. Here $\bigl(T^{\langle k\rangle}(u)\bigr)^a_b$ and $\bigl(T^{\langle n\rangle}(u)\bigr)^a_b$ are series $T^a_b(u)$ for the algebras $Y(\mathfrak{gl}_{k})$ and $Y(\mathfrak{gl}_{n})$, respectively.

\section{Combinatorial formulae for rational weight functions}

Fix a collection of nonnegative integers $\xi_1,\xi_2,\dots,\xi_{n-1}$. Set $\boldsymbol{\xi}=(\xi_1,\xi_2,\dots,\xi_{n-1})$ and $\xi^a=\xi_1+\dots+\xi_a$, $a=1,\dots,n-1$.
Consider the variables $t^a_i$, $a=1,\dots, n-1$, $i=1,\dots, \xi_a$. We will also write
\begin{equation*}
\boldsymbol{t}^a=\bigl(t^a_1,\dots, t^a_{\xi_a}\bigr),\qquad \boldsymbol{t}=\bigl(\boldsymbol{t}^1,\dots,\boldsymbol{t}^{n-1}\bigr).
\end{equation*}
We will use the ordered product notation for any noncommuting factors $X_1,\dots,X_k$,
\begin{equation*}
\overset{\rightarrow}{\prod\limits_{1\leq i\leq k}} X_i= X_1X_2\cdots X_k, \qquad \overset{\leftarrow}{\prod\limits_{1\leq i\leq k}} X_i= X_kX_{k-1}\cdots X_1.
\end{equation*}
Consider the vector space \smash{$(\mathbb{C}^n)^{\otimes \xi^{n-1}}$} and define
\begin{gather*}
\overset{[j]}{\mathbb{T}}\bigl({\boldsymbol{t}^j}\bigr)=\overset{\rightarrow}{\prod\limits_{ 1\leq k\leq \xi_j}} {T}^{(\xi^{j-1}+k)}\bigl(t_k^j\bigr),\\ \overset{[k,j]}{\mathbb{R}}\bigl(\boldsymbol{t}^k,\boldsymbol{t}^j\bigr)=
\overset{\rightarrow}{\prod\limits_{1\leq i\leq\xi_k}}\Biggl(
\overset{\leftarrow}{\prod\limits_{1\leq l\leq\xi_j}} {R}^{(\xi^{k-1}+i,\xi^{j-1}+l)}\bigl(t^k_i-t^j_l\bigr)\Biggr),
\end{gather*}
where we consider \smash{${T}^{(\xi^{j-1}+k)}\bigl(t_k^j\bigr)$} as a matrix with noncommuting entries belonging to $Y(\mathfrak{gl}_{n})$.

For the expression
\begin{equation}\label{betthe 1}
\widehat{\mathbb{T}}_{\boldsymbol{\xi}}(\boldsymbol{t})= \overset{[1]}{\mathbb{T}}\bigl(\boldsymbol{t}^1\bigr)\cdots \overset{[n-1]}{\mathbb{T}}\bigl(\boldsymbol{t}^{n-1}\bigr)\overset{\leftarrow}{\prod\limits_{1\leq i\leq n-1}}\Biggl(\overset{\leftarrow}{\prod\limits_{1\leq j< i}}\overset{[i,j]}{\mathbb{R}}\bigl(\boldsymbol{t}^i,\boldsymbol{t}^j\bigr)\Biggr),
\end{equation}
denote by ${\mathbb{B}}_{\boldsymbol{\xi}}(\boldsymbol{t})$ the following entry
\begin{equation*}
{\mathbb{B}}_{\boldsymbol{\xi}}(\boldsymbol{t})=\bigl( \widehat{\mathbb{T}}_{\boldsymbol{\xi}}(\boldsymbol{t}) \bigr)^{\boldsymbol{1}^{\xi_1}, \boldsymbol{2}^{\xi_2}, \dots, \boldsymbol{n-1}^{\xi_{n-1}}}_{\boldsymbol{2}^{\xi_1}, \boldsymbol{3}^{\xi_2}, \dots, \boldsymbol{n}^{\xi_{n-1}}},
\end{equation*}
where
\begin{align*}
&\boldsymbol{1}^{\xi_1}, \boldsymbol{2}^{\xi_2}, \dots, \boldsymbol{(n-1)}^{\xi_{n-1}} =\underbrace{ 1, 1,\dots, 1}_{\xi_1}, \underbrace{ 2,2,\dots,2}_{\xi_2}, \dots,\underbrace{n-1, n-1, \dots, n-1}_{\xi_{n-1}},\\
&\boldsymbol{2}^{\xi_1}, \boldsymbol{3}^{\xi_2}, \dots, \boldsymbol{n}^{\xi_{n-1}} =\underbrace{ 2, 2,\dots, 2}_{\xi_1}, \underbrace{ 3,3,\dots,3}_{\xi_2}, \dots, \underbrace{n, n, \dots, n}_{\xi_{n-1}}.
\end{align*}
To indicate the dependence on $n$, if necessary, we will write \smash{${\mathbb{B}}_{\boldsymbol{\xi}}^{\langle n\rangle}(\boldsymbol{t})$}.
\begin{Example}
Let $n=2$ and $\boldsymbol{\xi}=(\xi_{1}) $. Then ${\mathbb{B}}_{\boldsymbol{\xi}}^{\ltwo}(\boldsymbol{t})=T^1_{2} \bigl(t_{1}^{1} \bigr) \dots T^1_{2} \bigl(t_{\xi_1^{}}^{1}\bigr)$.
Abusing notation, we will further write \smash{${\mathbb{B}}_{\xi_1}^{\ltwo}(\boldsymbol{t})$}
instead of \smash{${\mathbb{B}}_{\boldsymbol{\xi}}^{\ltwo}(\boldsymbol{t})$}.
\end{Example}
\begin{Example} Let $n=4$ and $\boldsymbol{\xi}=(1,1,1)$. Then
\begin{align*}
{\mathbb{B}}_{\boldsymbol{\xi}}^{\langle 4\rangle}(\boldsymbol{t})={}& T^1_{2}\bigl(t_{1}^{1}\bigr) T^2_{3}\bigl(t_{1}^{2}\bigr) T^3_{4}\bigl(t_{1}^{3}\bigr) \\
&{+}\frac{1}{t_{1}^{2}-t_{1}^{1}} T^1_{3}\bigl(t_{1}^{1}\bigr) T^2_{2}\bigl(t_{1}^{2}\bigr) T^3_{4}\bigl(t_{1}^{3}\bigr)+\frac{1}{t_{1}^{3}-t_{1}^{2}} T^1_{2}\bigl(t_{1}^{1}\bigr) T^2_{4}\bigl(t_{1}^{2}\bigr) T^3_{3}\bigl(t_{1}^{3}\bigr) \\
&{+}\frac{1}{\bigl(t_{1}^{2}-t_{1}^{1}\bigr)\bigl(t_{1}^{3}-t_{1}^{2}\bigr)}\bigl(T^1_{4}\bigl(t_{1}^{1}\bigr) T^2_{2}\bigl(t_{1}^{2}\bigr) T^3_{3}\bigl(t_{1}^{3}\bigr)+T^1_{3}\bigl(t_{1}^{1}\bigr) T^2_{4}\bigl(t_{1}^{2}\bigr) T^3_{2}\bigl(t_{1}^{3}\bigr)\bigr) \\
&{+}\frac{\bigl(t_{1}^{2}-t_{1}^{1}\bigr)\bigl(t_{1}^{3}-t_{1}^{2}\bigr)+1}{\bigl(t_{1}^{2}-t_{1}^{1}\bigr)\bigl(t_{1}^{3}-t_{1}^{1}\bigr)\bigl(t_{1}^{3}-t_{1}^{2}\bigr)} T^1_{4}\bigl(t_{1}^{1}\bigr) T^2_{3}\bigl(t_{1}^{2}\bigr) T^3_{2}\bigl(t_{1}^{3}\bigr).
\end{align*}\end{Example}

For a weight singular vector $v$ with respect to the action of $Y(\mathfrak{gl}_n)$, we call the expression $\mathbb{B}_{\boldsymbol{\xi}}(\boldsymbol{t})v$ the (rational) vector-valued weight function of weight ($\xi_1,\xi_2-\xi_1,\dots,\xi_{n-1}-\xi_{n-2},-\xi_{n-1}$) associated with $v$.

From now on, we will consider only the case $n=4$. We are interested in writing down the following expansion for a weight function in an evaluation module over the $Y(\mathfrak{gl}_4)$:
\begin{equation}\label{example}
{\mathbb{B}}_{\boldsymbol{\xi}}(\boldsymbol{t})v=\sum\limits_{\vec{m}\in\mathbb{Z}_{\geq 0}^6} F_{\vec{m}}(\boldsymbol{t})\cdot e^{m_{32}}_{32}e^{m_{31}}_{31}e^{m_{42}}_{42}e^{m_{41}}_{41}e^{m_{21}}_{21}e^{m_{43}}_{43} v
\end{equation}
with the functions $F_{\vec{m}}(\boldsymbol{t})$ given by explicit formulae. Various
similar expansions for $ {\mathbb{B}}_{\boldsymbol{\xi}}(\boldsymbol{t})v$ were obtained in \cite{TVC}, however, expansion $\eqref{example}$ is not covered there.

\section{Splitting property of the weight functions}

Let \smash{$T_{a b}^{\ltwo}(u)$} be series ($\ref{toperator}$) for the algebra $Y(\mathfrak{g l}_{2})$, and \smash{$R^{\ltwo}(u)$} be the corresponding rational $R$-matrix, see $\eqref{rmatrix}$. Consider two $Y(\mathfrak{g l}_{2})$-module structures on the vector space $\mathbb{C}^2$. The first one, called $L(x)$, is given by the rule
\begin{equation*}
\pi(x)\colon \ T^{\ltwo}(u) \mapsto R^{\ltwo}(u-x),
\end{equation*}
and the second one, called $\bar{L}(x)$, is given by the rule
\begin{equation*}
\varpi(x)\colon \ T^{\ltwo}(u) \mapsto\bigl( \bigl(R^{\ltwo}(x-u)\bigr)^{(21)}\bigr)^{t_{2}},
\end{equation*}
where the superscript $t_{2}$ stands for the matrix transposition in the second tensor factor.

Let $\mathbf{w}_{1}$, $\mathbf{w}_{2}$ be the standard basis of the space $\mathbb{C}^{2}$. The module $L(x)$ is a highest weight evaluation module with $\mathfrak{g l}_{2} $ highest weight $(1, 0)$ and highest weight vector~$\mathbf{w}_{1}$. The module~$\bar{L}(x)$ is a~highest weight evaluation module with $\mathfrak{g l}_{2}$ highest weight $(0,-1)$ and highest weight vector~$\mathbf{w}_{2}$.
For any $X\in \operatorname{End}\bigl(\mathbb{C}^{2}\bigr)$, set $\nu(X)=X \mathbf{w}_{1}$ and $\bar{\nu}(X)=X \mathbf{w}_{2}$.

Recall the coproducts $\Delta$ and \smash{$\widetilde{\Delta}$}, see~\eqref{coproduct} and~\eqref{opcoproduct}, and the embeddings $\psi_2\colon Y(\mathfrak{g l}_{2})\rightarrow Y(\mathfrak{g l}_{4})$ and $\phi_2\colon Y(\mathfrak{g l}_{2})\rightarrow Y(\mathfrak{g l}_{4})$ given by $\eqref{embeddings}$.
For any $k$, denote by
\[
 \Delta^{(k)}\colon \ Y(\mathfrak{g l}_{2}) \rightarrow(Y(\mathfrak{g l}_{2}))^{\otimes(k+1)} \qquad \text{and}\qquad \widetilde{\Delta^{(k)}}\colon \ Y(\mathfrak{g l}_{2}) \rightarrow(Y(\mathfrak{g l}_{2}))^{\otimes(k+1)}
 \]
 the iterated coproduct and opposite coproduct.
Consider the maps
\begin{gather*}\psi_2(x_{1}, \dots, x_{k})\colon\ Y(\mathfrak{g l}_{2}) \rightarrow\bigl(\mathbb{C}^{2}\bigr)^{\otimes k} \otimes Y(\mathfrak{g l}_{4}),
\\
\psi_2(x_{1}, \dots, x_{k})= \bigl(\nu^{\otimes k} \otimes \mathrm{id}\bigr) \circ (\pi (x_{1} ) \otimes \cdots \otimes \pi (x_{k} ) \otimes \psi_2 ) \circ\Delta^{(k)},
\end{gather*}
and
\begin{gather*}
\begin{split}
& \phi_2 (x_{1}, \dots, x_{k} )\colon\ Y(\mathfrak{g l}_{2}) \rightarrow\bigl(\mathbb{C}^{2}\bigr)^{\otimes k} \otimes Y(\mathfrak{g l}_{4}),
\\
& \phi_2 (x_{1}, \dots, x_{k} )= \bigl(\bar{\nu}^{\otimes k} \otimes \mathrm{id} \bigr) \circ ( \varpi (x_{1} ) \otimes \cdots \otimes \varpi (x_{k} ) \otimes \phi_2 ) \circ \widetilde{\Delta} ^{(k)}.
\end{split}
\end{gather*}
For any element \smash{$g \in\bigl(\mathbb{C}^{2}\bigr)^{\otimes k} \otimes Y(\mathfrak{g l}_{4})$}, we define its components $g^{\boldsymbol{a}}$, $\boldsymbol{a}=(a_{1}, \dots, a_{k})$, by the rule
\begin{equation*}
g=\sum_{a_{1}, \dots, a_{k}=1}^{2} \mathbf{w}_{a_{1}} \otimes \cdots \otimes \mathbf{w}_{a_{k}} \otimes g^{\boldsymbol{a}}.
\end{equation*}

In the $\mathfrak{gl}_4$ case, we have $\boldsymbol{\xi}=(\xi_1,\xi_2,\xi_3)$, and formula $\eqref{betthe 1}$ takes the form
\begin{eqnarray*}
{\mathbb{B}}_{\boldsymbol{\xi}}(\boldsymbol{t})=\Bigl(\overset{[1]}{\mathbb{T}}\bigl(\boldsymbol{t}^1\bigr)\overset{[2]}{\mathbb{T}}\bigl(\boldsymbol{t}^2\bigr)\overset{[3]}{\mathbb{T}}\bigl(\boldsymbol{t}^3\bigr)
\overset{[32]}{\mathbb{R}}\bigl(\boldsymbol{t}^3,\boldsymbol{t}^2\bigr)\overset{[31]}{\mathbb{R}}\bigl(\boldsymbol{t}^3,\boldsymbol{t}^1\bigr)\overset{[21]}{\mathbb{R}}\bigl(\boldsymbol{t}^2,\boldsymbol{t}^1\bigr)
\Bigr)^{\boldsymbol{1}^{\xi_1}, \boldsymbol{2}^{\xi_2}, \boldsymbol{3}^{\xi_3}}_{\boldsymbol{2}^{\xi_1}, \boldsymbol{3}^{\xi_2}, \boldsymbol{4}^{\xi_3}}.
\end{eqnarray*}

\begin{Proposition}Let $v$ be a $Y(\mathfrak{g l}_{4})$-singular vector, $\xi_1$, $\xi_2$, $\xi_3$ be nonnegative integers, and
$\boldsymbol{t}=\bigl(t^1_1,\dots,t^1_{\xi_1};t^2_1,\dots,t^2_{\xi_2};t^3_1,\dots,t^3_{\xi_3}\bigr)$. Then
\label{propsplitting}
\begin{equation}\label{splitting}
\mathbb{B}_{\boldsymbol{\xi}}(\boldsymbol{t})v =
\sum_{\boldsymbol{a},\boldsymbol{b}} \bigl(\mathcal{T}\bigl(\boldsymbol{t}^{2}\bigr)\bigr)^{\boldsymbol{a}}_{\boldsymbol{b}}
\bigl(\phi_2\bigl(\boldsymbol{t}^ 2\bigr)\bigl(\mathbb{B}_{\xi_1}^{\ltwo}\bigl(\boldsymbol{t}^{1}\bigr)\bigr)\bigr)^{\boldsymbol{a}}
\bigl(\psi_2\bigl(\boldsymbol{t}^{2}\bigr)\bigl(\mathbb{B}_{\xi_3}^{\ltwo}\bigl(\boldsymbol{t}^{ 3}\bigr)\bigr)\bigr)^{\boldsymbol{b-2}} v,
\end{equation}
where the sum is taken over all sequences $\boldsymbol{a}=(a_1,a_2,\dots,a_{\xi_2})$, $\boldsymbol{b}=(b_1,b_2,\dots,b_{\xi_2})$, such that $a_i\in \{1,2\}$, $b_i\in \{3,4\}$ for all $i=1,\dots,\xi_2$, $\boldsymbol{b-2}=(b_1-2,b_2-2,\dots,b_{\xi_2}-2)$, and
\begin{equation*}
\bigl( \mathcal{T}\bigl(\boldsymbol{t}^2\bigr)\bigr)^{\boldsymbol{a}}_{\boldsymbol{b}}=T\bigl(t^2_1\bigr)^{a_1}_{b_1}T\bigl(t^2_2\bigr)^{a_2}_{b_2}\cdots T\bigl(t^2_{\xi_2}\bigr)^{a_{\xi_2}}_{b_{\xi_2}}.
\end{equation*}
\end{Proposition}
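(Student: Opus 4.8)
The plan is to prove the identity by first making the maps $\phi_2(\boldsymbol{t}^2)$ and $\psi_2(\boldsymbol{t}^2)$ on the right-hand side fully explicit, and then reconciling the resulting expression with the defining matrix entry of $\mathbb{B}_{\boldsymbol{\xi}}(\boldsymbol{t})$ by means of the defining relation~\eqref{trel} and the Yang--Baxter equation~\eqref{YB}. The organizing principle is the block decomposition $\mathfrak{gl}_2\oplus\mathfrak{gl}_2\subset\mathfrak{gl}_4$: the variables $\boldsymbol{t}^1$ build a nested $Y(\mathfrak{gl}_2)$ weight function inside the upper block (indices $1,2$), the variables $\boldsymbol{t}^3$ build one inside the lower block (indices $3,4$), and the variables $\boldsymbol{t}^2$ supply the genuinely off-diagonal operators $T^a_b$ with $a\in\{1,2\}$, $b\in\{3,4\}$, collected in $\mathcal{T}(\boldsymbol{t}^2)$, that cross between the two blocks.

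First I would evaluate $\psi_2(\boldsymbol{t}^2)\bigl(\mathbb{B}^{\ltwo}_{\xi_3}(\boldsymbol{t}^3)\bigr)$ directly. Since $\Delta^{(\xi_2)}$ is an algebra homomorphism, it sends the product $\mathbb{B}^{\ltwo}_{\xi_3}(\boldsymbol{t}^3)$ to a product of iterated coproducts~\eqref{coproduct} of the generators; the factors $\pi(t^2_i)$ then realize the first $\xi_2$ tensor components as the $R$-matrices $R^{\ltwo}(t^3_l-t^2_i)$, which is exactly the dressing recorded by $\overset{[32]}{\mathbb{R}}(\boldsymbol{t}^3,\boldsymbol{t}^2)$, while $\psi_2$ acts on the last component by shifting the Yangian indices into $\{3,4\}$. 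Evaluating each of the $\xi_2$ auxiliary factors on $\mathbf{w}_1$ via $\nu$ then shows that the component $\bigl(\psi_2(\boldsymbol{t}^2)(\mathbb{B}^{\ltwo}_{\xi_3}(\boldsymbol{t}^3))\bigr)^{\boldsymbol{b-2}}$ is the matrix entry of $\overset{[3]}{\mathbb{T}}(\boldsymbol{t}^3)\overset{[32]}{\mathbb{R}}(\boldsymbol{t}^3,\boldsymbol{t}^2)$ with upper index $3$ on the type-$3$ slots and column indices $\boldsymbol{b}$ on the type-$2$ slots. A parallel computation, using the opposite coproduct~\eqref{opcoproduct}, the maps $\varpi$, and evaluation on $\mathbf{w}_2$ via $\bar{\nu}$, identifies $\bigl(\phi_2(\boldsymbol{t}^2)(\mathbb{B}^{\ltwo}_{\xi_1}(\boldsymbol{t}^1))\bigr)^{\boldsymbol{a}}$ with the matrix entry of $\overset{[1]}{\mathbb{T}}(\boldsymbol{t}^1)\overset{[21]}{\mathbb{R}}(\boldsymbol{t}^2,\boldsymbol{t}^1)$ in the upper block. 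I would check both identifications first in the case $\boldsymbol{\xi}=(1,1,1)$, where the $R$-matrix entries reduce to elementary operators on $\mathbb{C}^2$, and then in general.

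Substituting the two expressions into the right-hand side and summing over $\boldsymbol{a}$ and $\boldsymbol{b}$ contracts the type-$2$ auxiliary indices and produces a sum of products in which the Yangian factors stand in the order type-$2$, type-$1$, type-$3$ and are dressed only by $\overset{[21]}{\mathbb{R}}$ and $\overset{[32]}{\mathbb{R}}$. It then remains to match this against the definition of $\mathbb{B}_{\boldsymbol{\xi}}(\boldsymbol{t})$, where the factors appear in the order type-$1$, type-$2$, type-$3$ and all three blocks $\overset{[32]}{\mathbb{R}}$, $\overset{[31]}{\mathbb{R}}$, $\overset{[21]}{\mathbb{R}}$ are present explicitly. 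This passage between the two orderings is precisely what relation~\eqref{trel} encodes: carrying an $R$-matrix through a product $T^{(i)}(u)T^{(j)}(v)$ interchanges the two factors, and~\eqref{YB} governs how the three $R$-blocks move past one another.

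I expect the real difficulty to lie in this reconciliation of non-commuting factors. The cross $R$-matrix $\overset{[31]}{\mathbb{R}}(\boldsymbol{t}^3,\boldsymbol{t}^1)$ is absent from the factored side, yet it is genuinely present in the answer: already for $\boldsymbol{\xi}=(1,1,1)$ it is responsible for the pole $1/(t^3_1-t^1_1)$ in the last term of the corresponding Example. On the factored side this contribution has to be recovered from the commutators that arise when the type-$1$ and type-$3$ operators are moved into their final positions, each commutator producing through~\eqref{trel} a pole at the difference of a type-$3$ and a type-$1$ variable. Arranging these commutators so that they reassemble exactly into $\overset{[31]}{\mathbb{R}}(\boldsymbol{t}^3,\boldsymbol{t}^1)$, while tracking which intermediate index configurations survive, is the delicate step. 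I would organize the whole reconciliation by induction on $\xi_2$, removing one type-$2$ variable at a time so that at each stage only a single extra factor $T(t^2_i)^{a_i}_{b_i}$ must be commuted into place.
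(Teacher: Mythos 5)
Your first step --- unpacking $\psi_2\bigl(\boldsymbol{t}^2\bigr)$ and $\phi_2\bigl(\boldsymbol{t}^2\bigr)$ so that the two $Y(\mathfrak{gl}_2)$ weight functions become matrix entries of $\overset{[3]}{\mathbb{T}}\bigl(\boldsymbol{t}^3\bigr)\overset{[32]}{\mathbb{R}}\bigl(\boldsymbol{t}^3,\boldsymbol{t}^2\bigr)$ and of $\overset{[21]}{\mathbb{R}}\bigl(\boldsymbol{t}^2,\boldsymbol{t}^1\bigr)\overset{[1]}{\mathbb{T}}\bigl(\boldsymbol{t}^1\bigr)$ (note the order: in the $\phi_2$ factor the $R$-block stands to the \emph{left} of the $\mathbb{T}$-block, not to the right as you wrote) --- is exactly how the paper argues, via its Lemma~\ref{lemma}. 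But your reconciliation plan is aimed at the wrong target and cannot close as written. Formula~\eqref{splitting} is \emph{false} as an identity in $Y(\mathfrak{gl}_4)$: its right-hand side carries no dependence whatsoever on the differences $t^3_i-t^1_j$, so the cross block $\overset{[31]}{\mathbb{R}}\bigl(\boldsymbol{t}^3,\boldsymbol{t}^1\bigr)$ is not hiding in any commutators and there is nothing to ``reassemble''. The pole $1/\bigl(t^3_1-t^1_1\bigr)$ you point to in the $\boldsymbol{\xi}=(1,1,1)$ example multiplies only monomials ending in $T^3_2\bigl(t^3_1\bigr)$, and these annihilate a singular vector; once applied to $v$, every $t^3-t^1$ pole of $\mathbb{B}_{\boldsymbol{\xi}}(\boldsymbol{t})$ disappears. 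Your proposal never invokes the singularity of $v$, which is the only ingredient that makes \eqref{splitting} true at all.

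The actual mechanism is the reverse of the one you anticipate: one must show that $\overset{[31]}{\mathbb{R}}$ acts \emph{trivially} on the index configurations that survive against the singular vector. In the paper, the Yang--Baxter equation~\eqref{YB} is used once, to rewrite $\mathbb{B}_{\boldsymbol{\xi}}(\boldsymbol{t})$ in the ordering $\overset{[21]}{\mathbb{R}}\,\overset{[2]}{\mathbb{T}}\,\overset{[1]}{\mathbb{T}}\,\overset{[3]}{\mathbb{T}}\,\overset{[31]}{\mathbb{R}}\,\overset{[32]}{\mathbb{R}}$, after which complete sums over intermediate indices $\boldsymbol{p},\boldsymbol{q},\boldsymbol{r},\boldsymbol{s}$ are inserted. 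The commutation relation $T^3_b(w)T^3_d(u)=\frac{w-u-1}{w-u}T^3_d(u)T^3_b(w)+\frac{1}{w-u}T^3_d(w)T^3_b(u)$ together with singularity of $v$ forces the $\overset{[3]}{\mathbb{T}}$-component to vanish on $v$ unless $\boldsymbol{r}\geq\boldsymbol{3}^{\xi_3}$, and then an induction on $\xi_3$, resting only on the elementary facts that the entry $R^{jl}_{ik}$ with $i\neq l$ is nonzero only for $i=j$, $k=l$ and that $R^{ik}_{ik}=1$, yields the collapse $\bigl(\overset{[31]}{\mathbb{R}}\bigr)^{\boldsymbol{p},\boldsymbol{q},\boldsymbol{r}}_{\boldsymbol{2}^{\xi_1},\boldsymbol{q},\boldsymbol{s}}=\delta_{\boldsymbol{p},\boldsymbol{2}^{\xi_1}}\delta_{\boldsymbol{r},\boldsymbol{s}}$. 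After that, no gymnastics with relation~\eqref{trel} is required: the vanishing of $\bigl(\overset{[21]}{\mathbb{R}}\bigr)$-entries with $a_i\geq 3$ and of $\bigl(\overset{[32]}{\mathbb{R}}\bigr)$-entries with $b_i\leq 2$ restricts the sum to $a_i\in\{1,2\}$, $b_i\in\{3,4\}$, and summing out the internal indices gives \eqref{splitting}. Your proposed induction on $\xi_2$, removing one $t^2$-variable at a time, never confronts this point; the induction that does the work is on $\xi_3$, applied to the cross $R$-block sitting against the singular vector.
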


\begin{proof}
Formula ($\ref{splitting}$) follows from the definition of the maps $\psi_2\bigl(\boldsymbol{t}^{2}\bigr)$ and $\phi_2\bigl(\boldsymbol{t}^{2}\bigr)$ and Lemma~$\ref{lemma}$ below.
\end{proof}

\begin{Lemma}\label{lemma} One has
\begin{equation*}
\mathbb{B}_{\boldsymbol{\xi}}(\boldsymbol{t})v=
\sum_{\boldsymbol{a},\boldsymbol{b}} \bigl( \mathcal{T}\bigl(\boldsymbol{t}^{ 2}\bigr) \bigr)^{\boldsymbol{a}}_{\boldsymbol{b}} \Bigl( \overset{[21]}{\mathbb{R}}\bigl(\boldsymbol{t}^{ 2},\boldsymbol{t}^{ 1}\bigr) \overset{[1]}{\mathbb{T}}\bigl(\boldsymbol{t}^{ 1}\bigr) \Bigr)^{\boldsymbol{1}^{\xi_1}, \boldsymbol{2}^{\xi_2}, \boldsymbol{3}^{\xi_3}}_{\boldsymbol{2}^{\xi_1}, \boldsymbol{a}, \boldsymbol{3}^{\xi_3}} \Bigl( \overset{[3]}{\mathbb{T}}\bigl(\boldsymbol{t}^{ 3}\bigr) \overset{[32]}{\mathbb{R}}\bigl(\boldsymbol{t}^{ 3},\boldsymbol{t}^{ 2}\bigr) \Bigr)^{\boldsymbol{1}^{\xi_1}, \boldsymbol{b}, \boldsymbol{3}^{\xi_3}}_{\boldsymbol{1}^{\xi_1}, \boldsymbol{3}^{\xi_2}, \boldsymbol{4}^{\xi_3}}v,
\end{equation*}
where the sum over $\boldsymbol{a}$, $\boldsymbol{b}$ is the same as in formula $\eqref{splitting}$.
\end{Lemma}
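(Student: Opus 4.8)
The plan is to turn the left‑hand side into the right‑hand side by three moves on the operator $\widehat{\mathbb{T}}_{\boldsymbol{\xi}}$ before the fixed matrix entry is extracted: reorder the $R$‑matrix tail by the Yang--Baxter equation, interchange the block‑$1$ and block‑$2$ monodromies by the $RTT$‑relation so that $\overset{[2]}{\mathbb{T}}(\boldsymbol{t}^2)$ comes out front as $\mathcal{T}(\boldsymbol{t}^2)$, and finally drop $\overset{[31]}{\mathbb{R}}(\boldsymbol{t}^3,\boldsymbol{t}^1)$ using that $v$ is $Y(\mathfrak{gl}_4)$‑singular. Throughout I use the block versions of \eqref{YB} and \eqref{trel}, obtained from the single‑site relations strand by strand (cf. Proposition~\ref{twotensor}); since they are operator identities on $\bigl(\mathbb{C}^4\bigr)^{\otimes\xi^3}\otimes Y(\mathfrak{gl}_4)$, the prescribed top and bottom indices are untouched.

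First I would rewrite $\overset{[32]}{\mathbb{R}}\,\overset{[31]}{\mathbb{R}}\,\overset{[21]}{\mathbb{R}}$. Treating the three blocks as fat lines carrying $\boldsymbol{t}^3,\boldsymbol{t}^2,\boldsymbol{t}^1$, the Yang--Baxter equation \eqref{YB} gives $\overset{[32]}{\mathbb{R}}\,\overset{[31]}{\mathbb{R}}\,\overset{[21]}{\mathbb{R}}=\overset{[21]}{\mathbb{R}}\,\overset{[31]}{\mathbb{R}}\,\overset{[32]}{\mathbb{R}}$. As $\overset{[21]}{\mathbb{R}}$ acts only on the block‑$1$ and block‑$2$ factors, it commutes with $\overset{[3]}{\mathbb{T}}(\boldsymbol{t}^3)$, so I can slide it left until it abuts $\overset{[2]}{\mathbb{T}}(\boldsymbol{t}^2)$. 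Then the block form of \eqref{trel}, namely $\overset{[1]}{\mathbb{T}}(\boldsymbol{t}^1)\,\overset{[2]}{\mathbb{T}}(\boldsymbol{t}^2)\,\overset{[21]}{\mathbb{R}}(\boldsymbol{t}^2,\boldsymbol{t}^1)=\overset{[21]}{\mathbb{R}}(\boldsymbol{t}^2,\boldsymbol{t}^1)\,\overset{[2]}{\mathbb{T}}(\boldsymbol{t}^2)\,\overset{[1]}{\mathbb{T}}(\boldsymbol{t}^1)$, interchanges the two monodromies and rewrites $\widehat{\mathbb{T}}_{\boldsymbol{\xi}}$ as the same matrix entry of $\overset{[21]}{\mathbb{R}}\,\overset{[2]}{\mathbb{T}}\,\overset{[1]}{\mathbb{T}}\,\overset{[3]}{\mathbb{T}}\,\overset{[31]}{\mathbb{R}}\,\overset{[32]}{\mathbb{R}}$. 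The Yangian factors now stand in the order block‑$2$, block‑$1$, block‑$3$, matching $\mathcal{T}(\boldsymbol{t}^2)$, $\overset{[1]}{\mathbb{T}}$, $\overset{[3]}{\mathbb{T}}$ on the right.

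The heart of the argument is that $\overset{[31]}{\mathbb{R}}(\boldsymbol{t}^3,\boldsymbol{t}^1)$ may be replaced by the identity once the entry is applied to $v$. The reordering has placed $\overset{[3]}{\mathbb{T}}(\boldsymbol{t}^3)$ next to $v$ (the trailing $R$‑matrices are scalars on the module), so the block‑$3$ Yangian factor acting on $v$ is a product $T^3_{\rho_1}(t^3_1)\cdots T^3_{\rho_{\xi_3}}(t^3_{\xi_3})\,v$ of entries all of upper index $3$. Using \eqref{trel} for two such series together with $T^3_b(u)v=0$ for $b<3$, an induction on the position of the rightmost index $\rho_i<3$ (commute it one step to the right; every resulting term has that entry further right, and the base case annihilates $v$) shows the product vanishes unless all $\rho_i\ge 3$. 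Thus only $\rho_i\in\{3,4\}$ contribute. The bottom index fixes every block‑$1$ output of $\overset{[31]}{\mathbb{R}}$ to be $2$; propagating this value inward through the factors $R^{ab}_{cd}(t^3_i-t^1_l)=\delta_{ac}\delta_{bd}+(t^3_i-t^1_l)^{-1}\delta_{ad}\delta_{bc}$, in each factor the block‑$1$ slot carries $2$ and the block‑$3$ slot carries $\rho_i\ge 3$, so its transposition term, proportional to $\delta_{ad}=\delta_{\rho_i,2}$, vanishes. Hence every factor collapses to $\delta_{ac}\delta_{bd}$: $\overset{[31]}{\mathbb{R}}$ acts as the identity and forces all block‑$1$ indices produced by $\overset{[1]}{\mathbb{T}}$ to equal $2$.

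After discarding $\overset{[31]}{\mathbb{R}}$ I am left with the entry of $\overset{[21]}{\mathbb{R}}\,\overset{[2]}{\mathbb{T}}\,\overset{[1]}{\mathbb{T}}\,\overset{[3]}{\mathbb{T}}\,\overset{[32]}{\mathbb{R}}$, where block‑$1$ is touched only by $\overset{[21]}{\mathbb{R}}$ and $\overset{[1]}{\mathbb{T}}$, block‑$3$ only by $\overset{[3]}{\mathbb{T}}$ and $\overset{[32]}{\mathbb{R}}$, and block‑$2$ is the single shared block. Naming the intermediate block‑$2$ indices $\boldsymbol{a}$ (between $\overset{[21]}{\mathbb{R}}$ and $\overset{[2]}{\mathbb{T}}$) and $\boldsymbol{b}$ (between $\overset{[2]}{\mathbb{T}}$ and $\overset{[32]}{\mathbb{R}}$) and summing, the entry factors exactly as $\mathcal{T}(\boldsymbol{t}^2)^{\boldsymbol{a}}_{\boldsymbol{b}}$ times the two claimed factors, and the entries of $\overset{[21]}{\mathbb{R}}$ and $\overset{[32]}{\mathbb{R}}$ automatically restrict $a_i\in\{1,2\}$, $b_i\in\{3,4\}$, reproducing the summation range. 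The main obstacle is the third paragraph: showing $\overset{[31]}{\mathbb{R}}$ drops out, which is exactly where the singular vector $v$ is used and, as the introduction notes, where the trigonometric case would break down; the remaining steps are index bookkeeping and the standard block YBE and $RTT$ manipulations.
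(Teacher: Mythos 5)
Your proposal is correct and takes essentially the same route as the paper's proof: the identical YBE/RTT reordering to the entry of $\overset{[21]}{\mathbb{R}}\,\overset{[2]}{\mathbb{T}}\,\overset{[1]}{\mathbb{T}}\,\overset{[3]}{\mathbb{T}}\,\overset{[31]}{\mathbb{R}}\,\overset{[32]}{\mathbb{R}}$, the vanishing of the $\overset{[3]}{\mathbb{T}}$-entries on $v$ unless all block-$3$ indices are at least $3$, the collapse $\bigl(\overset{[31]}{\mathbb{R}}\bigr)^{\boldsymbol{p},\boldsymbol{q},\boldsymbol{r}}_{\boldsymbol{2}^{\xi_1},\boldsymbol{q},\boldsymbol{s}}=\delta_{\boldsymbol{p},\boldsymbol{2}^{\xi_1}}\delta_{\boldsymbol{r},\boldsymbol{s}}$, and the same final index bookkeeping yielding the restrictions $a_i\in\{1,2\}$, $b_i\in\{3,4\}$. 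Your factor-by-factor propagation of the value $2$ through $\overset{[31]}{\mathbb{R}}$ is just an unrolled form of the paper's induction on $\xi_3$, resting on the same observation about the entries $R^{jl}_{ik}$ with $i\neq l$.
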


\begin{proof}
Using the Yang--Baxter equation~\eqref{YB}, we can write ${\mathbb{B}}(\boldsymbol{t})$ in the following form:
\begin{equation*}
{\mathbb{B}}_{\boldsymbol{\xi}}(\boldsymbol{t})v=\Bigl( \overset{[21]}{\mathbb{R}}\bigl(\boldsymbol{t}^2,\boldsymbol{t}^1\bigr)\overset{[2]}{\mathbb{T}}\bigl(\boldsymbol{t}^2\bigr)\overset{[1]}{\mathbb{T}}\bigl(\boldsymbol{t}^1\bigr) \overset{[3]}{\mathbb{T}}\bigl(\boldsymbol{t}^3\bigr)\overset{[31]}{\mathbb{R}}\bigl(\boldsymbol{t}^3,\boldsymbol{t}^1\bigr)\overset{[32]}{\mathbb{R}} \bigl(\boldsymbol{t}^3,\boldsymbol{t}^2\bigr)\Bigr)^{\boldsymbol{1}^{\xi_1}, \boldsymbol{2}^{\xi_2}, \boldsymbol{3}^{\xi_3}}_{\boldsymbol{2}^{\xi_1}, \boldsymbol{3}^{\xi_2}, \boldsymbol{4}^{\xi_3}}v.
\end{equation*}
Therefore,
\begin{equation}\label{begin}
{\mathbb{B}}_{\boldsymbol{\xi}}(\boldsymbol{t})v=\sum_{\boldsymbol{p},\boldsymbol{q},\boldsymbol{r},\boldsymbol{s}}\Bigl( \overset{[21]}{\mathbb{R}} \overset{[2]}{\mathbb{T}} \overset{[1]}{\mathbb{T}} \Big)^{\boldsymbol{1}^{\xi_1},\boldsymbol{2}^{\xi_2}, \boldsymbol{3}^{\xi_3}}_{\boldsymbol{p},\boldsymbol{q},\boldsymbol{3}^{\xi_3}} \Bigl( \overset{[3]}{\mathbb{T}} \Big)^{\boldsymbol{p},\boldsymbol{q},\boldsymbol{3}^{\xi_3}}_{\boldsymbol{p},\boldsymbol{q},\boldsymbol{r}} \Bigl(\overset{[31]}{\mathbb{R}} \Big)^{\boldsymbol{p},\boldsymbol{q},\boldsymbol{r}}_{\boldsymbol{2}^{\xi_1},\boldsymbol{q},\boldsymbol{s}} \Bigl( \overset{[32]}{\mathbb{R}} \Big)^{\boldsymbol{2}^{\xi_1},\boldsymbol{q},\boldsymbol{s}}_{\boldsymbol{2}^{\xi_1},\boldsymbol{3}^{\xi_2},\boldsymbol{4}^{\xi_3}}v,
\end{equation}
where $\boldsymbol{p}=(p_1,\dots, p_{\xi_1})$, $\boldsymbol{q}=(q_1,\dots, q_{\xi_2})$, $\boldsymbol{r}=(r_1,\dots, r_{\xi_3})$, $\boldsymbol{s}=(s_1,\dots, s_{\xi_3})$. In $\eqref{begin}$, we~omitted the arguments $\boldsymbol{t}^{1}$, $\boldsymbol{t}^{2}$, $\boldsymbol{t}^{3}$ since they can be restored from the context.

We say that \smash{$\boldsymbol r\geq \boldsymbol{3}^{\xi_3}$} if $r_i\geq 3$ for all $i=1,\dots, \xi_3$. Observe that by the definition of a singular vector and the commutation relations
\begin{equation*}
T^{3}_b(w)T^3_d(u)=\frac{w-u-1}{w-u} T^3_d(u)T^3_b(w)+\frac{1}{w-u}T^3_d(w)T^3_b(u),
\end{equation*}
we have \smash{$\overset{[3]}{\mathbb{T}}\bigl(\boldsymbol{t}^3\bigr)_{\boldsymbol{r}}^{\boldsymbol{3}^{\xi_3}}v=0$} unless $\boldsymbol{r}\geq \boldsymbol{3}^{\xi_3}$.

Furthermore, for \smash{$\boldsymbol{r}\geq \boldsymbol {3}^{\xi_3}$}, we have by induction on $\xi_3$ that
\[
\Bigl(\overset{[31]}{\mathbb{R}} \Big)^{\boldsymbol{p},\boldsymbol{q},\boldsymbol{r}}_{\boldsymbol{2}^{\xi_1}, \boldsymbol{q},\boldsymbol{s}}=\delta_{\boldsymbol{p},\boldsymbol{2}^{\xi_1}} \delta_{\boldsymbol{r},\boldsymbol{s}}.
\]
Indeed, for $\xi_3=0$, the statement is true. Assume that
\[
\Bigl(\overset{[31]}{\mathbb{R}} \Big)^{\boldsymbol{p},\boldsymbol{q},\boldsymbol{r}}_{\boldsymbol{2}^{\xi_1},\boldsymbol{q},\boldsymbol{s}}=\delta_{\boldsymbol{p}, \boldsymbol{2}^{\xi_1}} \delta_{\boldsymbol{r},\boldsymbol{s}}
\]
for $\boldsymbol{r}\geq \boldsymbol {3}^{\xi_3}$ if $\xi_3=n-1$, and consider the case $\xi_3=n$.
Let $\boldsymbol{r}=(r_1,\dots,r_n)$, $\boldsymbol{s}=(s_1,\dots,s_n)$, $\tilde{\boldsymbol{r}}=(r_1,\dots,r_{n-1})$, and $\tilde{\boldsymbol{s}}=(s_1,\dots,s_{n-1})$. Then
\begin{equation}\label{30}
\Bigl(\overset{[31]}{\mathbb{R}} \Big)^{\boldsymbol{p},\boldsymbol{q},\boldsymbol{r}}_{\boldsymbol{2}^{\xi_1},\boldsymbol{q},\boldsymbol{s}}=\sum_{\boldsymbol{x}}
\Biggl( \overset{\rightarrow}{\prod\limits_{1\leq i\leq n-1}} \Biggl( \overset{\leftarrow}{\prod\limits_{1\leq j\leq \xi_1}} R^{(\xi^2+i,j)} \Biggr)\Biggr)^{\boldsymbol{p},\boldsymbol{q},\tilde{\boldsymbol{r}},r_n}_{\boldsymbol{x},\boldsymbol{q},\tilde{\boldsymbol{s}},r_n}
\Biggl( \overset{\leftarrow}{\prod\limits_{1\leq k\leq \xi_1}}R^{( \xi^3,k)}\Biggr)^{\boldsymbol{x},\boldsymbol{q},\tilde{\boldsymbol{s}},r_n}_{\boldsymbol{2}^{\xi_1},\boldsymbol{q},\tilde{\boldsymbol{s}},s_n}.
\end{equation}
Observe that the $R$-matrix entry \smash{$R^{jl}_{ik}$} with $i\neq l$ is not zero if and only if $i=j$ and $k=l$, and \smash{$R^{ik}_{ik}=1$}. Because of that and since $r_n\geq 3$, the last factor \smash{$\bigl(\prod R^{(\xi^3,k)} \bigr)^{\boldsymbol{x},\boldsymbol{q},\tilde{\boldsymbol{s}},{r_n}}_{\boldsymbol{2}^{\xi_1},\boldsymbol{q},\tilde{\boldsymbol{s}},s_n}$} in $\eqref{30}$ equals $\delta_{\boldsymbol{x},\boldsymbol{2}^{\xi_1}} \delta_{r_n,s_n}$, and we get
\begin{equation*}
\Bigl(\overset{[31]}{\mathbb{R}} \Big)^{\boldsymbol{p},\boldsymbol{q},\boldsymbol{r}}_{\boldsymbol{2}^{\xi_1},\boldsymbol{q},\boldsymbol{s}}=
\Biggl( \overset{\rightarrow}{\prod\limits_{1\leq i\leq n-1}} \Bigg(\overset{\leftarrow}{\prod\limits_{1\leq j\leq \xi_1}}R^{(\xi^2+i,j)} \Bigg)\Biggr)^{\boldsymbol{p},\boldsymbol{q},\tilde{\boldsymbol{r}},r_n}_{\boldsymbol{2}^{\xi_1},\boldsymbol{q},\tilde{\boldsymbol{s}},r_n}\delta_{r_n,s_n}
= \delta_{\boldsymbol{p},\boldsymbol{2}^{\xi_1}} \delta_{\boldsymbol{\tilde{r}},\boldsymbol{\tilde{s}}} \delta_{r_n,s_n}
= \delta_{\boldsymbol{p},\boldsymbol{2}^{\xi_1}} \delta_{\boldsymbol{{r}},\boldsymbol{{s}}},
\end{equation*}
by the induction assumption.

Since
\[
\Bigl( \overset{[3]}{\mathbb{T}} \Big)^{\boldsymbol{p},\boldsymbol{q},\boldsymbol{3}^{\xi_3}}_{\boldsymbol{p},\boldsymbol{q},\boldsymbol{r}} v=0
\]
 unless $\boldsymbol{r}\geq \boldsymbol{3}^{\xi_3}$ and
\[
\Bigl(\overset{[31]}{\mathbb{R}} \Big)^{\boldsymbol{p},\boldsymbol{q},\boldsymbol{r}}_{\boldsymbol{2}^{\xi_1}, \boldsymbol{q},\boldsymbol{s}}=\delta_{\boldsymbol{p},\boldsymbol{2}^{\xi_1}} \delta_{\boldsymbol{{r}},\boldsymbol{{s}}}
\]
for $\boldsymbol{r}\geq \boldsymbol{3}^{\xi_3}$, formula $\eqref{begin}$ becomes
\begin{equation*}
{\mathbb{B}}_{\boldsymbol{\xi}}(\boldsymbol{t})v=\sum_{\boldsymbol{q},\boldsymbol{r}}
\Bigl( \overset{[21]}{\mathbb{R}} \overset{[2]}{\mathbb{T}} \overset{[1]}{\mathbb{T}} \Big)^{\boldsymbol{1}^{\xi_1},\boldsymbol{2}^{\xi_2}, \boldsymbol{3}^{\xi_3}}_{\boldsymbol{2}^{\xi_1},\boldsymbol{q},\boldsymbol{3}^{\xi_3}}
\Bigl( \overset{[3]}{\mathbb{T}} \Big)^{\boldsymbol{2}^{\xi_1},\boldsymbol{q},\boldsymbol{3}^{\xi_3}}_{\boldsymbol{2}^{\xi_1},\boldsymbol{q},\boldsymbol{r}} \Bigl( \overset{[32]}{\mathbb{R}} \Big)^{\boldsymbol{2}^{\xi_1},\boldsymbol{q},\boldsymbol{r}}_{\boldsymbol{2}^{\xi_1},\boldsymbol{3}^{\xi_2},\boldsymbol{4}^{\xi_3}}v,
\end{equation*}
and can be further transformed as
\begin{equation}\label{33}
{\mathbb{B}}_{\boldsymbol{\xi}}(\boldsymbol{t})v= \sum_{\boldsymbol{a},\boldsymbol{b},\boldsymbol{c},\boldsymbol{r}}
\Bigl(\overset{[2]}{\mathbb{T}} \Big)^{\boldsymbol{c},\boldsymbol{a},\boldsymbol{3}^{\xi_3}}_{\boldsymbol{c},\boldsymbol{b},\boldsymbol{3}^{\xi_3}}
\Bigl(\overset{[21]}{\mathbb{R}} \Big)^{\boldsymbol{1}^{\xi_1},\boldsymbol{2}^{\xi_2},\boldsymbol{3}^{\xi_3}}_{\boldsymbol{c},\boldsymbol{a},\boldsymbol{3}^{\xi_3}}
\Bigl(\overset{[1]}{\mathbb{T}} \Big)^{\boldsymbol{c},\boldsymbol{b},\boldsymbol{3}^{\xi_3}}_{\boldsymbol{2}^{\xi_1},\boldsymbol{b},\boldsymbol{3}^{\xi_3}} \Bigl( \overset{[3]}{\mathbb{T}} \Big)^{\boldsymbol{2}^{\xi_1},\boldsymbol{b},\boldsymbol{3}^{\xi_3}}_{\boldsymbol{2}^{\xi_1},\boldsymbol{b},\boldsymbol{r}} \Bigl( \overset{[32]}{\mathbb{R}} \Big)^{\boldsymbol{2}^{\xi_1},\boldsymbol{b},\boldsymbol{r}}_{\boldsymbol{2}^{\xi_1},\boldsymbol{3}^{\xi_2},\boldsymbol{4}^{\xi_3}}v,
\end{equation}
where the sum is over all sequences $\boldsymbol{a}=(a_1,\dots,a_{\xi_2})$, $\boldsymbol{b}=(b_1,\dots,b_{\xi_2})$, $\boldsymbol{c}=(c_1,\dots, c_{\xi_1})$, $\boldsymbol{r}=(r_1,\dots,r_{\xi_3})$ such that $a_i,b_i,c_i,r_i\in\{1,2,3,4\}$. Since
\[
\Bigl(\overset{[21]}{\mathbb{R}} \Bigr)^{\boldsymbol{1}^{\xi_1},\boldsymbol{2}^{\xi_2},\boldsymbol{3}^{\xi_3}}_{\boldsymbol{c},\boldsymbol{a},\boldsymbol{3}^{\xi_3}}=0
\]
 if $a_i\geq3$ for some~$i$, and
 \[
 \Bigl( \overset{[32]}{\mathbb{R}} \Bigr)^{\boldsymbol{2}^{\xi_1},\boldsymbol{b},\boldsymbol{r}}_{\boldsymbol{2}^{\xi_1},\boldsymbol{3}^{\xi_2},\boldsymbol{4}^{\xi_3}}=0
 \]
 if $b_i\leq 2$ for some $i$, terms in the sum on the right-hand side of $\eqref{33}$ equal zero unless $a_i\in\{1,2\}$ and $b_i\in \{3,4\}$ for all $i$. Taking the sum over $\boldsymbol{c}$ and $\boldsymbol{r}$ in formula~\eqref{33}, we get the statement of Lemma~\ref{lemma}.
\end{proof}

\begin{Example} Here we illustrate the proof of the relation
\[
\Bigl(\overset{[31]}{\mathbb{R}} \Big)^{\boldsymbol{p},\boldsymbol{q},\boldsymbol{r}}_{\boldsymbol{2}^{\xi_1}, \boldsymbol{q},\boldsymbol{s}}=\delta_{\boldsymbol{p},\boldsymbol{2}^{\xi_1}} \delta_{\boldsymbol{r},\boldsymbol{s}}
\]
 if $\boldsymbol{r}\geq \boldsymbol {3}^{\xi_3}$ for $\xi_1=\xi_3=2$. In this case, $ \boldsymbol{p}=(p_1,p_2)$, $\boldsymbol{r}=(r_1,r_2)$, $\boldsymbol{s}=(s_1,s_2)
$, and
\begin{equation*}
\Bigl(\overset{[31]}{\mathbb{R}} \Big)^{\boldsymbol{p},\boldsymbol{q},\boldsymbol{r}}_{\boldsymbol{2}^{\xi_1},\boldsymbol{q},\boldsymbol{s}}= \sum_{a,b,c,d}{R}^{p_2r_1}_{ab}\bigl(t^3_1-t^1_2\bigr) {R}^{p_1 b}_{cs_1}\bigl(t^3_1-t^1_1\bigr) {R}^{ar_2}_{2d}\bigl(t^3_2-t^1_2\bigr) {R}^{cd}_{2s_2}\bigl(t^3_2-t^1_1\bigr).
\end{equation*}
For $r_1\geq 3$, $r_2\geq 3$, we have ${R}^{ar_2}_{2d}\bigl(t^3_2-t^1_2\bigr)=\delta_{a,2} \delta_{r_2,d}$, thus
\begin{equation*}
\Bigl(\overset{[31]}{\mathbb{R}} \Big)^{\boldsymbol{p},\boldsymbol{q},\boldsymbol{r}}_{\boldsymbol{2}^{\xi_1},\boldsymbol{q},\boldsymbol{s}}= \sum_{b,c}{R}^{p_2r_1}_{2b}\bigl(t^3_1-t^1_2\bigr) {R}^{p_1 b}_{cs_1}\bigl(t^3_1-t^1_1\bigr) {R}^{cr_2}_{2s_2}\bigl(t^3_2-t^1_1\bigr).
\end{equation*}
Then ${R}^{cr_2}_{2s_2}\bigl(t^3_2-t^1_1\bigr)=\delta_{c,2} \delta_{r_2,s_2}$ and ${R}^{p_2r_1}_{2b}\bigl(t^3_1-t^1_2\bigr)=\delta_{p_2,2} \delta_{r_1,b}$, so that
\begin{equation*}
\Bigl(\overset{[31]}{\mathbb{R}} \Big)^{\boldsymbol{p},\boldsymbol{q},\boldsymbol{r}}_{\boldsymbol{2}^{\xi_1},\boldsymbol{q},\boldsymbol{s}}= {R}^{p_1 r_1}_{2s_1}\bigl(t^3_1-t^1_1\bigr) \delta_{p_2,2} \delta_{r_2,s_2} =\delta_{p_2,2} \delta_{p_1,2} \delta_{r_1,s_1} \delta_{r_2,s_2}=\delta_{\boldsymbol{p},\boldsymbol{2}^{\xi_1}} \delta_{\boldsymbol{r},\boldsymbol{s}}.
\end{equation*}
\end{Example}

\section[Main theorem for the gl\_4 case]{Main theorem for the $\boldsymbol{\mathfrak{gl}_4}$ case}

The main result of this paper is Theorem~\ref{main2} formulated at the end of this section. We will approach it in several steps.

For a nonnegative integer $m$, set
\begin{equation}\label{Qfun}
Q_{m} (t_{1}, \dots, t_{m} )=\prod_{1 \leqslant i<j \leqslant m} \frac{t_{i}-t_{j}-1}{t_{i}-t_{j}}.
\end{equation}
For an expression $f (t_{1},\dots,t_m )$, define
\begin{equation*}
\Sym_{\boldsymbol{t}} f (t_{1}, \dots, t_m )=\sum_{\sigma\in S_m} f \bigl(t_{\sigma(1)}, \dots, t_{\sigma(m)} \bigr),
\end{equation*}
and
\begin{equation}\label{barsym}
\Symb_{\boldsymbol{t}} f(t_{1}, \dots, t_m)=
\Sym_{\boldsymbol{t}} (f(t_{1}, \dots, t_m) Q_m (t_{1}, \dots, t_m ) ).
\end{equation}

To simplify notation, we will write \smash{$T^{\ltwo}_{ij}$} instead of \smash{$\bigl(T^{\ltwo}\bigr)^i_j$}.

\begin{Proposition}\label{twotensor}
Let $\xi$ be a nonnegative integer and $\boldsymbol{t}=$ $(t_{1}, \dots, t_{\xi} )$. Then
\begin{gather*}
\Delta \bigl( \mathbb{B}^{\ltwo}_{\xi}(\boldsymbol{t}) \bigr)=
\sum_{\eta=0}^{\xi} \frac{1}{(\xi-\eta) ! \eta!}\\
\qquad\times
\Symb_{\boldsymbol{t}}\Biggl( \bigl(\mathbb{B}^{\ltwo}_{\eta}(t_1,\dots,t_\eta) \otimes \mathbb{B}^{\ltwo}_{\xi-\eta}(t_{\eta+1},\dots,t_{\xi}) \bigr)
 \Biggl(\prod_{i=\eta+1}^{\xi } T^{\ltwo}_{22}(t_{i})\otimes \prod_{j=1}^{\eta} T^{\ltwo}_{11}(t_j)\Biggr) \Biggr).
\end{gather*}
\end{Proposition}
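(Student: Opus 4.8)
The plan is to compute the coproduct $\Delta\bigl(\mathbb{B}^{\ltwo}_\xi(\boldsymbol t)\bigr)$ directly from the definition. Recall from the $n=2$ Example that $\mathbb{B}^{\ltwo}_\xi(\boldsymbol t)=T^{\ltwo}_{12}(t_1)\cdots T^{\ltwo}_{12}(t_\xi)$. Since $\Delta$ is an algebra homomorphism, I would first write $\Delta\bigl(\mathbb{B}^{\ltwo}_\xi(\boldsymbol t)\bigr)=\prod_{i=1}^\xi \Delta\bigl(T^{\ltwo}_{12}(t_i)\bigr)$, and then expand each factor using the coproduct formula \eqref{coproduct}, which for $\mathfrak{gl}_2$ reads $\Delta\bigl(T^{\ltwo}_{12}(u)\bigr)=T^{\ltwo}_{12}(u)\otimes T^{\ltwo}_{22}(u)+T^{\ltwo}_{11}(u)\otimes T^{\ltwo}_{12}(u)$. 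Multiplying out the $\xi$ factors produces a sum over all ways of assigning, to each index $i$, one of the two terms; equivalently, a sum over subsets $S\subseteq\{1,\dots,\xi\}$ recording which positions pick the ``$11\otimes 12$'' term.

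Next I would reorganize this raw expansion into the symmetrized form of the statement. Each monomial in the expansion is a product, in the original order $1,2,\dots,\xi$, of tensor factors of the form $T^{\ltwo}_{11}(t_i)\otimes T^{\ltwo}_{12}(t_i)$ (for $i\in S$) and $T^{\ltwo}_{12}(t_i)\otimes T^{\ltwo}_{22}(t_i)$ (for $i\notin S$). To match the right-hand side, where the $\mathbb{B}$-factors are grouped, I would use the commutation relations in $Y(\mathfrak{gl}_2)$ coming from \eqref{trel} to move all $T^{\ltwo}_{12}$-type entries to the left within each tensor component and all the diagonal entries $T^{\ltwo}_{11}$, $T^{\ltwo}_{22}$ to the right. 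This is exactly where the weight $Q_m$ and the barred symmetrization $\Symb$ in \eqref{Qfun}, \eqref{barsym} enter: reordering the $T^{\ltwo}_{12}(t_i)$ among themselves is not free, and the structure constants are precisely the factors $\frac{t_i-t_j-1}{t_i-t_j}$ assembled into $Q_m$. The combinatorial prefactors $\frac{1}{(\xi-\eta)!\,\eta!}$ arise because summing over an unordered subset $S$ of size $\eta$ and then symmetrizing over all $\xi!$ permutations overcounts each term by $\eta!\,(\xi-\eta)!$.

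Concretely, I would collect terms by the cardinality $\eta=|S|$ and argue that the sum over all subsets of size $\eta$, after reordering, equals $\frac{1}{(\xi-\eta)!\,\eta!}\,\Symb_{\boldsymbol t}$ applied to the single representative term in which the first $\eta$ variables go to the left tensor factor and the remaining $\xi-\eta$ to the right. The diagonal entries, once pushed to the right, assemble into $\prod_{i=\eta+1}^\xi T^{\ltwo}_{22}(t_i)\otimes\prod_{j=1}^\eta T^{\ltwo}_{11}(t_j)$, matching the displayed formula; the leading $\mathbb{B}^{\ltwo}_\eta\otimes\mathbb{B}^{\ltwo}_{\xi-\eta}$ is what remains of the off-diagonal entries.

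The main obstacle I anticipate is the bookkeeping in the reordering step: one must verify that the commutation relations \eqref{trel} for the various products $T^{\ltwo}_{12}(t_i)T^{\ltwo}_{12}(t_j)$ and the diagonal-past-off-diagonal moves produce exactly the rational factors packaged in $Q_m$, with no stray terms, and that the overcounting factor is exactly $\eta!\,(\xi-\eta)!$ and not something subtler. Since this statement is classical (the reference to \cite{Kor} and the deferral of a clean proof to Appendix~\ref{appendixA} signal as much), I expect the argument to be an induction on $\xi$ together with careful tracking of the $Q_m$ weights rather than anything conceptually new; the challenge is purely to make the symmetrization identity airtight.
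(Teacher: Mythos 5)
Your overall route---expand $\Delta$ multiplicatively over the factors $T^{\ltwo}_{12}(t_i)$, sort the resulting monomials by which tensor slot receives the $T_{12}$, and then symmetrize---is in fact the same skeleton as the paper's Appendix~\ref{appendixA} proof, but the step you wave through is precisely where the entire difficulty lives, and as stated it is wrong. The relations you need for the reordering are not ``structure-constant'' relations: by \eqref{2com2} and \eqref{2com3}, moving a diagonal entry past an off-diagonal one gives $T_{11}(u)T_{12}(t)=\frac{u-t-1}{u-t}T_{12}(t)T_{11}(u)+\frac{1}{u-t}T_{12}(u)T_{11}(t)$, i.e., a leading term \emph{plus} an exchange term in which the spectral parameters swap between the two generators. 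Iterating through a string of $T_{12}$'s yields \eqref{ABf} and \eqref{DBf}, whose sum-over-$l$ parts are exactly the ``stray terms'' you assert do not occur. They do not vanish monomial by monomial; in the paper's induction on $\xi$ they assemble into two families ($Y_2$ and $Y_4$) that cancel each other only after a change of summation variable inside the symmetrized sums, while the leading terms ($Y_1$ and $Y_3$) recombine into the full sum over $S_\xi$ by partitioning permutations according to where the distinguished index lands. Declaring that the commutators produce ``exactly the rational factors packaged in $Q_m$, with no stray terms'' assumes the conclusion. Relatedly, your attribution of $Q_m$ is off: by \eqref{2com1} the operators $T^{\ltwo}_{12}(t_i)$ \emph{commute} among themselves, so no factors arise from reordering them; the intra-group pieces of $Q_m$ are inert normalization factors which, via the identity \eqref{n!}, produce exactly your claimed overcount $\eta!\,(\xi-\eta)!$, whereas the genuinely dynamical factors are the cross-group products $\prod_{1\le i\le\eta<j\le\xi}\frac{t_i-t_j-1}{t_i-t_j}$ coming from the diagonal-past-off-diagonal moves.

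There is also a concrete convention error at the very first step: with the paper's coproduct \eqref{coproduct}, one has $\Delta\bigl(T^{\ltwo}_{12}(u)\bigr)=T^{\ltwo}_{12}(u)\otimes T^{\ltwo}_{11}(u)+T^{\ltwo}_{22}(u)\otimes T^{\ltwo}_{12}(u)$ (formula \eqref{gl2cop}), not $T_{12}\otimes T_{22}+T_{11}\otimes T_{12}$ as you wrote --- that is the opposite coproduct \eqref{opcoproduct}. Carried through, your expansion would place the $T_{11}$'s in the left tensor slot and the $T_{22}$'s in the right, producing the identity for $\widetilde\Delta$ rather than the stated one, in which the left slot carries $\mathbb{B}^{\ltwo}_\eta(t_1,\dots,t_\eta)\prod_{i=\eta+1}^{\xi}T^{\ltwo}_{22}(t_i)$ and the right slot $\mathbb{B}^{\ltwo}_{\xi-\eta}(t_{\eta+1},\dots,t_\xi)\prod_{j=1}^{\eta}T^{\ltwo}_{11}(t_j)$. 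So: right strategy, fixable sign-of-convention slip, but the proposal is not yet a proof --- the missing content is exactly the control of the exchange terms, which the paper supplies through \eqref{ABf}, \eqref{DBf} and the cancellation $Y_2(\boldsymbol{t})+Y_4(\boldsymbol{t})=0$.
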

This proposition goes back to \cite{Kor,TV}. For convenience, we give its proof in Appendix~\ref{appendixA}.

Given a subset $I $ of $ \{1,2,\dots, k\}$ denote by $I^*$ the complement of $I$ in $\{1,2,\dots,k\}$. Define a~vector \smash{$\mathbf{w}^{I}\in \bigl(\mathbb{C}^2\bigr)^{\otimes k}$} by the rule{\samepage
\begin{equation*} \mathbf{w}^{I}=\mathbf{w}_{a_1}\otimes \mathbf{w}_{a_2}\otimes\dots\otimes \mathbf{w}_{a_k},\end{equation*}
where $a_i=2$ if $i\in I$, and $a_i=1$ if $i\not\in I$.}

Fix a $Y(\mathfrak{g l}_{2})$-module $V$ and a weight singular vector $v \in V$ with respect to the $Y(\mathfrak{g l}_{2})$-action,
\begin{equation*}
T^{\ltwo}_{21}(u)v=0, \qquad T^{\ltwo}_{11}(u)v=\bigl\langle T^{\ltwo}_{11}(u)v \bigr\rangle v,\qquad T^{\ltwo}_{22}(u)v=\bigl\langle T^{\ltwo}_{22}(u)v \bigr\rangle v.
\end{equation*}
Here \smash{$\bigl\langle T^{\ltwo}_{11}(u)v \bigr\rangle$} and \smash{$\bigl\langle T^{\ltwo}_{22}(u)v \bigr\rangle$} are the corresponding eigenvalues.
Given complex numbers $z_1,\dots,z_k$, consider the $Y(\mathfrak{g l}_{2})$-module $L(z_1) \otimes \cdots \otimes L(z_k) \otimes V $. Observe that \smash{$\mathbf{w}_{1}^{\otimes k}\otimes v$} is a weight singular vector with respect to the action of $Y(\mathfrak{g l}_{2})$ in this module.

\begin{Proposition}\label{w1} For the $Y(\mathfrak{g l}_{2})$-module $L(z_1) \otimes \cdots \otimes L(z_k) \otimes V $, we have
\begin{gather}
\mathbb{B}^{\ltwo}_\xi(\boldsymbol{t})\bigl(\mathbf{w}_{1}^{\otimes k}\otimes v\bigr)\nonumber\\
\qquad{}=\sum_{I}\frac{1}{(\xi-|I|)!} {\Symb_{\boldsymbol{t}}}\Biggl[F_{I}(\boldsymbol{t},\boldsymbol{z}) \prod_{a=1}^{|I|} \bigl\langle T^{\ltwo}_{11}(t_{a} ) v \bigr\rangle \bigl( \mathbf{w}^{I}\otimes \mathbb{B}^{\ltwo}_{\xi-|I|}\bigl(t_{|I|+1},\dots,t_{\xi}\bigr) v\bigr) \Biggr],\label{formulaw1}
\end{gather}
where the sum is over all subsets $I\subset \{1,\dots,k\}$ such that $|I|\leq\xi$, and for a given $I=\{i_1<i_2<\dots<i_{|I|}\}$,
\begin{equation}\label{F}
F_{I}(\boldsymbol{t},\boldsymbol{z})= \prod_{a=1}^{|I|} \Biggl(\frac{1}{t_a-z_{i_a}}\prod_{ m=i_a+1}^k \frac{t_a-z_m+1}{t_a-z_m}\Biggr).
\end{equation}
\end{Proposition}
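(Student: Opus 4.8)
The plan is to prove \eqref{formulaw1} by induction on the number $k$ of evaluation factors $L(z_1),\dots,L(z_k)$. The base case $k=0$ is trivial: there are no tensor factors of type $L$, the only admissible subset is $I=\varnothing$, the prefactor $F_\varnothing$ is the empty product equal to $1$, and the formula reduces to the tautology $\mathbb{B}^{\ltwo}_\xi(\boldsymbol t)\,v=\mathbb{B}^{\ltwo}_\xi(\boldsymbol t)\,v$ (the $\overline{\operatorname{Sym}}$-symmetrization over a single surviving term being the identity once the $\frac{1}{\xi!}$ from $(\xi-|I|)!$ with $|I|=0$ is accounted for by the full symmetrization of $\mathbb{B}^{\ltwo}_\xi$, which is already symmetric). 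The inductive step is where the real work sits: assuming the formula for $k-1$ factors, I would peel off the first tensor factor $L(z_1)$ using the coproduct structure of $\mathbb{B}^{\ltwo}_\xi$.

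Concretely, I would apply Proposition~\ref{twotensor} to split $\Delta\bigl(\mathbb{B}^{\ltwo}_\xi(\boldsymbol t)\bigr)$ across the tensor product $L(z_1)\otimes\bigl(L(z_2)\otimes\cdots\otimes L(z_k)\otimes V\bigr)$. In this splitting the left tensor factor acts on $L(z_1)\cong\mathbb{C}^2$ via the evaluation representation $\pi(z_1)$, so each operator $\mathbb{B}^{\ltwo}_\eta$ and each $T^{\ltwo}_{22}$, $T^{\ltwo}_{11}$ acting on $\mathbf{w}_1$ becomes an explicit $2\times 2$ matrix entry. The key computation is to evaluate $\mathbb{B}^{\ltwo}_\eta(t_1,\dots,t_\eta)\,\mathbf{w}_1$ in the single module $L(z_1)$: by the Example with $\mathfrak{gl}_2$ highest weight $(1,0)$ one has $T^{\ltwo}_{21}(u)\mathbf w_1=\tfrac{1}{u-z_1}\mathbf w_2$, $T^{\ltwo}_{11}(u)\mathbf w_1=\mathbf w_1$, and $T^{\ltwo}_{22}(u)\mathbf w_1=\tfrac{u-z_1-1}{u-z_1}\mathbf w_1$ (reading off the action of $R^{\ltwo}(u-z_1)$), which forces $\eta\in\{0,1\}$: either the first factor is untouched ($\eta=0$, contributing $\mathbf w_1$) or exactly one $T_{21}$ lowers it ($\eta=1$, contributing $\mathbf w_2$ with the rational coefficient). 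This dichotomy is precisely the mechanism by which an index $1$ is either added to the subset $I$ (if position $1$ is lowered) or not.

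Then I would match the bookkeeping: the $\eta=1$ branch produces the factor $\tfrac{1}{t_a-z_1}$ together with the shifted product $\prod_{m>1}\tfrac{t_a-z_m+1}{t_a-z_m}$ coming from commuting the surviving $T_{22}(t_i)$ factors of Proposition~\ref{twotensor} past the remaining evaluation modules, exactly reproducing the $a$-th factor of $F_I$ in \eqref{F} when $i_a=1$. Applying the inductive hypothesis to the right tensor factor $L(z_2)\otimes\cdots\otimes V$ then yields the sum over subsets of $\{2,\dots,k\}$, and reassembling the two branches recombines these into the sum over all subsets $I\subset\{1,\dots,k\}$. The combinatorial factors $\tfrac{1}{(\xi-\eta)!\,\eta!}$ from Proposition~\ref{twotensor}, the factor $\tfrac{1}{(\xi-|I|)!}$ from the inductive hypothesis, and the $\overline{\operatorname{Sym}}$-operations must fuse into the single prefactor $\tfrac{1}{(\xi-|I|)!}$ in \eqref{formulaw1}; verifying this, and checking that the nested $\overline{\operatorname{Sym}}_{\boldsymbol t}$ from Proposition~\ref{twotensor} composes correctly with the $\overline{\operatorname{Sym}}_{\boldsymbol t}$ from the inductive hypothesis (using that the $Q_m$-weights in \eqref{barsym} are multiplicative under splitting the variable set), is the main obstacle. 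I expect the delicate point to be tracking how the $Q$-factors hidden inside $\overline{\operatorname{Sym}}$ on the two variable blocks of sizes $\eta$ and $\xi-\eta$ reassemble into the single $Q_\xi$-weighted symmetrization over all $\xi$ variables, and confirming that the eigenvalue product $\prod_{a=1}^{|I|}\bigl\langle T^{\ltwo}_{11}(t_a)v\bigr\rangle$ emerges only from the action on $V$ and not from the intermediate $L(z_j)$ factors.
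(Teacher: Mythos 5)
Your overall route is exactly the paper's: induction on $k$, splitting off $L(z_1)$ via the coproduct formula of Proposition~\ref{twotensor}, and using that $\mathbb{B}^{\ltwo}_{\eta}\mathbf{w}_1$ vanishes for $\eta\geq 2$, so that each evaluation factor is either untouched or lowered exactly once. However, your explicit computation of the $Y(\mathfrak{gl}_2)$-action on $\mathbf{w}_1$ is wrong in a way that would derail the derivation of $F_I$. In $L(z_1)$, whose highest weight is $(1,0)$, one has $T^{\ltwo}_{21}(u)\mathbf{w}_1=0$ and $\mathbb{B}^{\ltwo}_1(u)\mathbf{w}_1=T^{\ltwo}_{12}(u)\mathbf{w}_1=(u-z_1)^{-1}\mathbf{w}_2$, so it is $T_{12}$, not $T_{21}$, that lowers; and the diagonal eigenvalues are $T^{\ltwo}_{11}(u)\mathbf{w}_1=\frac{u-z_1+1}{u-z_1}\mathbf{w}_1$ and $T^{\ltwo}_{22}(u)\mathbf{w}_1=\mathbf{w}_1$ --- you have swapped the two and flipped the sign of the shift. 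This is not cosmetic: in Proposition~\ref{twotensor} the $T^{\ltwo}_{22}(t_i)$ factors sit in the \emph{first} tensor slot, i.e., they act on $L(z_1)$, where their eigenvalue is $1$; the shifted products $\prod_{m=i_a+1}^{k}\frac{t_a-z_m+1}{t_a-z_m}$ in \eqref{F}, together with the eigenvalues $\bigl\langle T^{\ltwo}_{11}(t_a)v\bigr\rangle$, come from the $T^{\ltwo}_{11}(t_j)$ factors in the \emph{second} slot acting on the weight singular vector $\mathbf{w}_1^{\otimes(k-1)}\otimes v$ of the remaining tensor product. Your attribution of the shifted product to ``commuting the surviving $T_{22}(t_i)$ factors past the remaining evaluation modules,'' combined with the eigenvalue $\frac{u-z_1-1}{u-z_1}$, would produce numerators $t_a-z_m-1$ attached to the wrong variables, i.e., a formula incompatible with \eqref{F}.

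The bookkeeping you flag as ``the main obstacle'' --- fusing the prefactors $\frac{1}{(\xi-\eta)!\,\eta!}$ from Proposition~\ref{twotensor} with the $\frac{1}{(\xi-|I|)!}$ of the inductive hypothesis and composing the nested symmetrizations --- is indeed the one remaining step, and you leave it unresolved. The paper closes it with identity \eqref{n!}, $\sum_{\tau\in S_n}\prod_{1\leq i<j\leq n}\frac{x_{\tau(i)}-x_{\tau(j)}-1}{x_{\tau(i)}-x_{\tau(j)}}=n!$: since the relevant expressions are symmetric within each variable block, the $Q$-weights hidden in $\Symb$ symmetrize to factorials that absorb the extra $\frac{1}{\eta!}$. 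Note that this identity is already needed at your base case $k=0$, where the claim $\frac{1}{\xi!}\Symb_{\boldsymbol{t}}\bigl[\mathbb{B}^{\ltwo}_{\xi}(\boldsymbol{t})v\bigr]=\mathbb{B}^{\ltwo}_{\xi}(\boldsymbol{t})v$ is precisely \eqref{n!} applied to the symmetric operator $\mathbb{B}^{\ltwo}_{\xi}$, rather than a tautology. So: correct skeleton, same as the paper's, but the module actions must be corrected and the symmetrization fusion actually carried out via \eqref{n!} before this counts as a proof.
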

\begin{proof}
Observe that for each $Y(\mathfrak{g l}_{2})$-module $L(z_i)$, $i=1,\dots, k$, the corresponding vector ${\mathbf{w}_{1}\in L(z_i)}$ is a weight singular vector,
\begin{equation*}
T^{\ltwo}_{11}(u) \mathbf{w}_{1} =\bigl(1+(u-z_i)^{-1}\bigr)\mathbf{w}_{1},\qquad T^{\ltwo}_{22}(u) \mathbf{w}_{1} =\mathbf{w}_{1},\qquad T^{\ltwo}_{21}(u) \mathbf{w}_{1}=0.
\end{equation*}
Moreover, \smash{$\mathbb{B}_{1}^{\ltwo}(u) \mathbf{w}_{1}=T^{\ltwo}_{12}(u)\mathbf{w}_{1}=(u-z_i)^{-1} \mathbf{w}_{2}$}
and \smash{$ \mathbb{B}_{\zeta}^{\ltwo}(u_1,\dots,u_\zeta) \mathbf{w}_{1}=0$} for $\zeta\geq 2$. Then formula $\eqref{formulaw1}$ follows from Proposition $\ref{twotensor}$ and identity~\eqref{n!} by induction on $k$.
\end{proof}

Given complex numbers $z_1,\dots,z_k$, consider the $Y(\mathfrak{g l}_{2})$-module $V\otimes \bar{L}(z_k) \otimes \cdots \otimes \bar{L}(z_1) $. Observe that \smash{$v\otimes \mathbf{w}_{2}^{\otimes k} $} is a weight singular vector with respect to the action of $Y(\mathfrak{g l}_{2})$ in this module.

\begin{Proposition}\label{w2}For the $Y(\mathfrak{g l}_{2})$-module $V\otimes \bar{L}(z_k) \otimes \cdots \otimes \bar{L}(z_1)$, we have
\begin{gather}
\mathbb{B}^{\ltwo}_\xi(\boldsymbol{t})\bigl(v\otimes \mathbf{w}_{2}^{\otimes k} \bigr)\label{formulaw2}\\
\qquad{}=\sum_{I}\frac{1}{(\xi-|I|)!} {\Symb_{\boldsymbol{t}}}\Biggl[\widetilde{F}_{I}(\boldsymbol{t},\boldsymbol{z}) \prod_{i=1}^{|I|} \bigl\langle T^{\ltwo}_{22}\bigl(t_{\xi-|I|+i} \bigr) v \bigr\rangle \bigl(\mathbb{B}^{\ltwo}_{\xi-|I|}\bigl(t_1,\dots,t_{\xi-|I|}\bigr) v \otimes \mathbf{w}^{I^*}\bigr) \Biggr],\nonumber
\end{gather}
where the sum is over all subsets $I\subset \{1,\dots,k\}$ such that $|I|\leq\xi$, and for a given $I=\{i_1<i_2<\dots<i_{|I|}\}$,
\begin{equation}\label{Ft}
\widetilde{F}_{I}(\boldsymbol{t},\boldsymbol{z})= \prod_{a=1}^{| I|} \Biggl( \frac{1}{z_{i_a}-t_{\xi-a+1}}\prod_{ m=i_a+1}^k \frac{z_m-t_{\xi-a+1}+1}{z_m-t_{\xi-a+1}}\Biggr).
\end{equation}
\end{Proposition}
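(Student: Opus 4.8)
The plan is to mirror the inductive argument of Proposition~\ref{w1}, reflecting the order of the tensor factors and interchanging the roles of the indices $1$ and $2$. First I would record the action of $Y(\mathfrak{gl}_2)$ on the highest weight vector $\mathbf{w}_2$ of each factor $\bar L(z_i)$. A direct computation from $\varpi(z_i)\colon T^{\ltwo}(u)\mapsto\bigl(\bigl(R^{\ltwo}(z_i-u)\bigr)^{(21)}\bigr)^{t_2}$ gives $\varpi(z_i)\bigl(T^{\ltwo}_{ab}(u)\bigr)=\delta_{ab}+(z_i-u)^{-1}E_{ab}$, hence
\begin{gather*}
T^{\ltwo}_{11}(u)\mathbf{w}_2=\mathbf{w}_2,\qquad T^{\ltwo}_{22}(u)\mathbf{w}_2=\frac{z_i-u+1}{z_i-u}\,\mathbf{w}_2,\\
T^{\ltwo}_{21}(u)\mathbf{w}_2=0,\qquad \mathbb{B}^{\ltwo}_1(u)\mathbf{w}_2=T^{\ltwo}_{12}(u)\mathbf{w}_2=(z_i-u)^{-1}\mathbf{w}_1,
\end{gather*}
while $\mathbb{B}^{\ltwo}_\zeta(u_1,\dots,u_\zeta)\mathbf{w}_2=0$ for $\zeta\geq 2$ because $\mathbb{C}^2$ is two dimensional. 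These are precisely the mirror images of the identities for $L(z_i)$ used in the proof of Proposition~\ref{w1}.

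Next I would run an induction on $k$, peeling off the rightmost factor. Writing the module as $\bigl(V\otimes\bar L(z_k)\otimes\cdots\otimes\bar L(z_2)\bigr)\otimes\bar L(z_1)$ and applying the coproduct formula of Proposition~\ref{twotensor} to $\mathbb{B}^{\ltwo}_\xi(\boldsymbol t)$, I would evaluate the second tensor factor on $\mathbf{w}_2\in\bar L(z_1)$. By the vanishing above only $\eta=\xi$ and $\eta=\xi-1$ survive. The term $\eta=\xi$ leaves $\mathbf{w}_2$ untouched and reduces to $\mathbb{B}^{\ltwo}_\xi(\boldsymbol t)$ acting on $V\otimes\bar L(z_k)\otimes\cdots\otimes\bar L(z_2)$, i.e.\ the case $1\notin I$. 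The term $\eta=\xi-1$ contributes $\mathbb{B}^{\ltwo}_1(t_\xi)\mathbf{w}_2=(z_1-t_\xi)^{-1}\mathbf{w}_1$ together with $T^{\ltwo}_{22}(t_\xi)$ acting on the left factor; since $v\otimes\mathbf{w}_2^{\otimes(k-1)}$ is a weight singular vector, this produces the scalar $\bigl\langle T^{\ltwo}_{22}(t_\xi)v\bigr\rangle\prod_{m=2}^{k}\frac{z_m-t_\xi+1}{z_m-t_\xi}$, which is exactly the $a=1$, $i_1=1$ instance of $\widetilde F_I$ in~\eqref{Ft} together with the new weight factor. Matching the two contributions against the right-hand side of~\eqref{formulaw2} then identifies the emerging coefficients with $\widetilde F_I(\boldsymbol t,\boldsymbol z)$ and the weights with $\prod_{i}\bigl\langle T^{\ltwo}_{22}(t_{\xi-|I|+i})v\bigr\rangle$.

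Finally I would reassemble the symmetrized sum. The coproduct of Proposition~\ref{twotensor} carries its own $\Symb$ and the factorial weights $\tfrac{1}{(\xi-\eta)!\,\eta!}$, and the inductive hypothesis supplies a further layer of symmetrization over the remaining variables; fusing these into the single $\Symb_{\boldsymbol t}$ over all of $\boldsymbol t$ with the correct coefficient $\tfrac{1}{(\xi-|I|)!}$ is precisely the combinatorial bookkeeping effected by identity~\eqref{n!}, exactly as in the proof of Proposition~\ref{w1}. I expect this reshuffling of the bar-symmetrizations---keeping track of the $Q_m$ factors concealed in $\Symb$ and of the re-indexing that turns the ordered landing positions of the variables into a subset $I=\{i_1<\dots<i_{|I|}\}$---to be the main technical obstacle, the representation-theoretic input being entirely routine once the action on $\mathbf{w}_2$ is in hand. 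Since the whole argument is the reflection of the proof of Proposition~\ref{w1}, one could alternatively present it as a formal consequence of that proposition under the order-reversing symmetry of the tensor factors; I would nevertheless carry out the induction directly, in keeping with the explicit style of the rest of the paper.
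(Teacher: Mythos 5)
Your proposal is correct and follows essentially the same route as the paper: the paper's proof likewise records that $\mathbf{w}_2\in\bar L(z_i)$ is weight singular with $T^{\ltwo}_{11}(u)\mathbf{w}_2=\mathbf{w}_2$, $T^{\ltwo}_{22}(u)\mathbf{w}_2=\bigl(1+(z_i-u)^{-1}\bigr)\mathbf{w}_2$, $\mathbb{B}^{\ltwo}_1(u)\mathbf{w}_2=(z_i-u)^{-1}\mathbf{w}_1$ and $\mathbb{B}^{\ltwo}_\zeta\mathbf{w}_2=0$ for $\zeta\geq2$, and then deduces \eqref{formulaw2} by induction on $k$ from Proposition~\ref{twotensor} and identity~\eqref{n!}. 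Your write-up merely makes explicit the details the paper leaves tacit (only $\eta=\xi,\xi-1$ surviving, the matching of the $\eta=\xi-1$ contribution with the $a=1$, $i_1=1$ factor of $\widetilde F_I$, and the fusion of symmetrizations via~\eqref{n!}), all of which check out.
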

\begin{proof}

Observe that for each $Y(\mathfrak{g l}_{2})$-module $\bar{L}(z_i)$, $i=1,\dots, k$, the corresponding vector ${\mathbf{w}_{2}\in \bar{L}(z_i)}$ is a weight singular vector,
\begin{equation*}
T^{\ltwo}_{11}(u) \mathbf{w}_{2} =\mathbf{w}_{2},\qquad \ T^{\ltwo}_{22}(u) \mathbf{w}_{2} =\bigl(1+(z_i-u)^{-1}\bigr)\mathbf{w}_{2},\qquad T^{\ltwo}_{21}(u) \mathbf{w}_{2}=0.
\end{equation*}
Moreover, \smash{$\mathbb{B}_{1}^{\ltwo}(u) \mathbf{w}_{2}=T^{\ltwo}_{12}(u)\mathbf{w}_{2}=(z_i-u)^{-1} \mathbf{w}_{1}$},
and \smash{$ \mathbb{B}_{\zeta}^{\ltwo}(u_1,\dots,u_\zeta) \mathbf{w}_{2}=0$} for $\zeta\geq 2$.
Then formula $\eqref{formulaw2}$ follows from Proposition $\ref{twotensor}$ and identity~\eqref{n!} by induction on $k$.
\end{proof}

For $\boldsymbol{t}=(t_1,\dots,t_\xi)$, $\boldsymbol{z}=(z_1,\dots,z_k)$, $y\in \mathbb{C}$, and a subset $I=\{i_1<i_2<\dots<i_{|I|}\}\subset \{1,\dots,k\}$, define the functions
\begin{equation}\label{V}
V_{I}(\boldsymbol{t}, \boldsymbol{z},y)=\frac{1}{(\xi-|I|) !} \Symb_{\boldsymbol{t}}\Biggl( F_I(\boldsymbol{t},\boldsymbol{z}) \prod_{a=1}^{|I|}(t_a-y)\Biggr)
\end{equation}
and
\begin{equation}\label{Vt}
\widetilde{V}_{I}(\boldsymbol{t}, \boldsymbol{z},y)=\frac{1}{(\xi-|I|) !} \Symb_{\boldsymbol{t} }\Biggl( \widetilde{F}_I(\boldsymbol{t},\boldsymbol{z}) \prod_{a=1}^{|I|}(t_{\xi-a+1}-y)\Biggr).
\end{equation}

Consider the collection $\mathcal{S}_{p,q,r,k}$ of pairs of subsets of $\{1,\dots,k\}$ with given cardinalities of the subsets and their intersection. Namely,
\begin{equation*}
\mathcal{S}_{p,q,r,k}=\{(I,J) \mid I, J \subset\{1,\dots,k\}, \, |I|=p,\, |J|=q,\, |I\cap J| =r\}.
\end{equation*}
For $I\subset\{1,\dots,k\}$, set $\check{I}=\{k-i+1, i\in I\}$.
\begin{Theorem}\label{main}
Let $V$ be a $\mathfrak{gl}_4$-module and $v\in V$ a $\mathfrak{gl}_4$-singular vector of weight $\bigl(\Lambda^1,\Lambda^2,\Lambda^3,\Lambda^4\bigr)$. Let $\xi_1$, $\xi_2$, $\xi_3$ be nonnegative integers, $\boldsymbol{t}^1=\bigl(t^1_1,\dots,t^1_{\xi_1}\bigr)$, $\boldsymbol{t}^2=\bigl(t^2_1,\dots,t^2_{\xi_2}\bigr)$, $\boldsymbol{t}^3=\bigl(t^3_1,\dots,t^3_{\xi_3}\bigr)$, and $\boldsymbol{t}=\bigl(\boldsymbol{t}^1,\boldsymbol{t}^2,\boldsymbol{t}^3\bigr)$. For every triple $(p,q,r)$, $p=0,\dots,\min(\xi_2,\xi_3)$, $q=0,\dots,\min(\xi_2,\xi_1)$, $r=\max (0, p+q-\xi_2),\dots,\min(p,q)$, fix a pair $(I_{p,q,r},J_{p,q,r})\in \mathcal{S}_{p,q,r,\xi_2}$. Then,

\begin{enumerate}\itemsep=0pt
\item[$(a)$] In the evaluation $Y(\mathfrak{gl}_4)$-module $V(x)$, one has
\begin{gather}
 {\mathbb{B}}_{\boldsymbol{\xi}}(\boldsymbol{t}) v=\prod_{a=1}^{3}\prod_{i=1}^{\xi_a}\frac{1}{t^a_i-x}\nonumber\\
\times\sum\limits_{p=0}^{\min(\xi_2,\xi_3)}
\sum\limits_{q=0}^{\min(\xi_2,\xi_1)} \!\!\!\sum\limits_{r=\max (0, p+q-\xi_2)}^{\min(p,q)} \Biggl(\Symb_{\boldsymbol{t}^2 }\bigl( V_{\check{I}_{p,q,r}}\bigl(\boldsymbol{t}^3, \boldsymbol{t}^2,x-\Lambda^3\bigr) \widetilde{V}_{J_{p,q,r}}\bigl( \boldsymbol{t}^1, \check{\boldsymbol{t}}^2,x-\Lambda^2\bigr)\bigr) \nonumber\\
\hphantom{\sum\limits_{p=0}^{\min(\xi_2,\xi_3)}\sum\limits_{q=0}^{\min(\xi_2,\xi_1)} \sum\limits_{r=\max (0, p+q-\xi_2)}^{\min(p,q)} \Biggl(}{} \
\times \frac{e_{32}^{\xi_2-p-q+r} e_{31}^{q-r} e_{42}^{p-r} e_{41}^{r} e_{21}^{\xi_1-q} e_{43}^{\xi_3-p} v}{(p-r)!(q-r)! r!(\xi_2-p-q+r)!}\Biggr),\label{mainTh}
\end{gather}
where $\check{\boldsymbol{t}}^2=\bigl(t^2_{\xi_2},\dots,t^2_1\bigr)$.
\item[$(b)$] The function \smash{$\Symb_{\boldsymbol{t}^2 }\bigl(V_{\check{I}_{p,q,r}}\bigl(\boldsymbol{t}^{ 3}, \boldsymbol{t}^{ 2},x-\Lambda^3\bigr) \widetilde{V}_{J_{p,q,r}}\bigl(\boldsymbol{t}^{ 1}, \check{\boldsymbol{t}}^{ 2},x-\Lambda^2\bigr) \bigr)$} in $\eqref{mainTh}$ does not depend on the choice of the pair $(I_{p,q,r},J_{p,q,r})$.
\end{enumerate} \end{Theorem}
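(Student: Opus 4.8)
The plan is to derive \eqref{mainTh} by feeding the two one-sided expansions of Propositions~\ref{w1} and~\ref{w2} into the splitting property of Proposition~\ref{propsplitting}, and then to regroup the resulting sum according to the combinatorial type of the subsets that appear. So I would start from \eqref{splitting} and evaluate its three factors separately in the evaluation module $V(x)$, keeping the index sequences $\boldsymbol a\in\{1,2\}^{\xi_2}$ and $\boldsymbol b\in\{3,4\}^{\xi_2}$ as the bookkeeping data that will eventually become the pair of subsets $(I,J)$.

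For the middle factor, $\bigl(\mathcal T(\boldsymbol t^2)\bigr)^{\boldsymbol a}_{\boldsymbol b}=\prod_i T\bigl(t^2_i\bigr)^{a_i}_{b_i}$ acts through $\epsilon\circ\rho_x$; since $a_i\in\{1,2\}<\{3,4\}\ni b_i$, each factor collapses to $e_{b_i a_i}\bigl(t^2_i-x\bigr)^{-1}$, so this factor contributes one of the mixed generators $e_{31},e_{41},e_{32},e_{42}$ at each position together with the prefactor $\prod_i\bigl(t^2_i-x\bigr)^{-1}$. For the outer factors I would use the definitions of $\psi_2(\boldsymbol t^2)$ and $\phi_2(\boldsymbol t^2)$: the component $\bigl(\psi_2(\boldsymbol t^2)(\mathbb B^{\ltwo}_{\xi_3}(\boldsymbol t^3))\bigr)^{\boldsymbol b-2}$, applied to $v$, is exactly the $\mathbf w^{I}$-coordinate (with $I=\{i:b_i=4\}$) of $\mathbb B^{\ltwo}_{\xi_3}(\boldsymbol t^3)$ acting on $\mathbf w_1^{\otimes\xi_2}$ tensored with $v$, where $v$ is $Y(\mathfrak{gl}_2)$-weight-singular along $\psi_2$; Proposition~\ref{w1} then evaluates it. Under $\psi_2$ the leftover $\mathbb B^{\ltwo}_{\xi_3-p}$ becomes a product of $T^3_4$, giving $e_{43}^{\xi_3-p}$ on $v$, and the eigenvalue factors $\bigl\langle T^3_3(t^3_a)v\bigr\rangle=1+\Lambda^3\bigl(t^3_a-x\bigr)^{-1}$ turn, after extracting $\prod_a\bigl(t^3_a-x\bigr)^{-1}$, into $\prod_a\bigl(t^3_a-(x-\Lambda^3)\bigr)$, which is exactly the shift appearing in $V_{\check I}$. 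Symmetrically, Proposition~\ref{w2} computes $\bigl(\phi_2(\boldsymbol t^2)(\mathbb B^{\ltwo}_{\xi_1}(\boldsymbol t^1))\bigr)^{\boldsymbol a}$ with $J=\{i:a_i=1\}$, yields $e_{21}^{\xi_1-q}$, and produces the shift $x-\Lambda^2$ in $\widetilde V_J$; here the opposite coproduct $\widetilde\Delta^{(\xi_2)}$ and the reversed parameter order in $V\otimes\bar L(z_k)\otimes\cdots\otimes\bar L(z_1)$ account for the reversal $\check{\boldsymbol t}^2$ in the argument of $\widetilde V_J$.

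Next I would assemble the pieces. Because we work in the rational case, the four mixed generators $e_{32},e_{31},e_{42},e_{41}$ mutually commute, and $e_{21}$ commutes with $e_{43}$; the product coming from $\mathcal T(\boldsymbol t^2)$ can therefore be freely reordered into the canonical order of \eqref{example}, with the $e_{21},e_{43}$ factors already standing to the right. The exponents depend only on the membership pattern of the positions in the pair $(I,J)$: positions in $(I\cup J)^*$, $J\setminus I$, $I\setminus J$, $I\cap J$ produce $e_{32},e_{31},e_{42},e_{41}$ respectively, giving $e_{32}^{\xi_2-p-q+r}e_{31}^{q-r}e_{42}^{p-r}e_{41}^{r}$ with $p=|I|$, $q=|J|$, $r=|I\cap J|$. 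Collecting the three prefactor families $\prod_{a,i}\bigl(t^a_i-x\bigr)^{-1}$ and recognizing the $\boldsymbol t^2$-symmetrization with the $Q$-twist \eqref{Qfun} that is forced by the two iterated coproducts together with the symmetry of $\mathbb B^{\ltwo}$ in its arguments, I obtain an intermediate identity that agrees with \eqref{mainTh} except that each $(p,q,r)$-term is replaced by a sum over all pairs $(I,J)\in\mathcal S_{p,q,r,\xi_2}$, each contributing $\Symb_{\boldsymbol t^2}\bigl(V_{\check I}\widetilde V_J\bigr)$ against the same monomial.

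The remaining and, I expect, the genuinely hard step, which simultaneously yields part~(b), is to show that $\Symb_{\boldsymbol t^2}\bigl(V_{\check I}(\boldsymbol t^3,\boldsymbol t^2,x-\Lambda^3)\,\widetilde V_J(\boldsymbol t^1,\check{\boldsymbol t}^2,x-\Lambda^2)\bigr)$ depends on $(I,J)$ only through $p,q,r$. The obstacle is that the barred symmetrization carries the non-symmetric factor $Q_{\xi_2}$, so merely relabelling the variables of $\boldsymbol t^2$ does not leave the expression invariant, and a real computation is needed. I would isolate this as a separate symmetry statement (playing the role of Proposition~\ref{symprop}) and prove it by establishing invariance under a single adjacent transposition of two positions $i,i+1$ of $\boldsymbol t^2$ acting simultaneously on $I$ and $J$, using explicit resonance relations for $F_{\check I},\widetilde F_J$ that exchange the membership of the two positions against the $Q$-factor. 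Since the four blocks $I\cap J$, $I\setminus J$, $J\setminus I$, $(I\cup J)^*$ have fixed sizes $r,p-r,q-r,\xi_2-p-q+r$, the group $S_{\xi_2}$ acts transitively on $\mathcal S_{p,q,r,\xi_2}$, so any two representatives are connected by such transpositions; once invariance is proved, all $|\mathcal S_{p,q,r,\xi_2}|$ summands coincide, the sum over the orbit collapses to any fixed representative $(I_{p,q,r},J_{p,q,r})$, its multiplicity combines with the normalizations of $V_{\check I},\widetilde V_J$ to produce the factorial denominator in \eqref{mainTh}, and part~(b) is immediate because the common value is manifestly independent of the chosen representative.
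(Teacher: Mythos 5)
Your first two paragraphs correctly reproduce the paper's route to its intermediate statement, Proposition~\ref{prethprop}: the splitting identity \eqref{splitting}, the collapse of $\bigl(\mathcal T\bigl(\boldsymbol t^2\bigr)\bigr)^{\boldsymbol a}_{\boldsymbol b}$ to mixed generators with the prefactor $\prod_i\bigl(t^2_i-x\bigr)^{-1}$, and the evaluation of the outer factors by Propositions~\ref{w1} and~\ref{w2} (modulo one glossed detail: the $\phi_2$-factor is applied to $e_{43}^{\xi_3-p}v$, not to $v$, so one must check that $e_{43}^{m}v$ is still weight singular for the $\phi_2$-pullback with the eigenvalues involving $\Lambda^1,\Lambda^2$, as the paper does in Section~\ref{section5}). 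The genuine gap is in your third paragraph: the intermediate identity you assert is not what this computation produces. The correct output, formula \eqref{preth}, is the \emph{plain} sum $\sum_{(I,J)\in\mathcal S_{p,q,r,\xi_2}}\widetilde V_J\bigl(\boldsymbol t^1,\boldsymbol t^2,x-\Lambda^2\bigr)V_I\bigl(\boldsymbol t^3,\boldsymbol t^2,x-\Lambda^3\bigr)$ against the monomial --- with no $\Symb_{\boldsymbol t^2}$, no $Q$-twist, and no reversals $\check I$, $\check{\boldsymbol t}^2$. Nothing at this stage symmetrizes the $\boldsymbol t^2$ variables: in Propositions~\ref{w1} and~\ref{w2} they enter only as evaluation parameters of the auxiliary modules $L\bigl(t^2_i\bigr)$, $\bar L\bigl(t^2_i\bigr)$, while the symmetrizations produced by the iterated coproducts are in $\boldsymbol t^3$ and $\boldsymbol t^1$ and are already absorbed into the definitions \eqref{V} and \eqref{Vt}. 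The reversals in \eqref{mainTh} arise only afterwards, from the longest element $\sigma_0$ in the symmetric-group computation of Section~\ref{section6}, not from the opposite coproduct as you suggest.

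Because of this misstatement, your final orbit-collapse argument is both circular and numerically inconsistent. Converting the plain sum over $\mathcal S_{p,q,r,\xi_2}$ into a single $\Symb_{\boldsymbol t^2}$-expression for one representative divided by $C_{p,q,r,\xi_2}$ is precisely the content of Proposition~\ref{symprop}; by ``recognizing'' the $\boldsymbol t^2$-symmetrization already in the intermediate identity you have assumed the hard step rather than proved it. Quantitatively, if each pair really contributed $\Symb_{\boldsymbol t^2}\bigl(V_{\check I}\widetilde V_J\bigr)$ and all contributions coincided, the $(p,q,r)$-term would equal $|\mathcal S_{p,q,r,\xi_2}|=\xi_2!/\bigl(r!(p-r)!(q-r)!(\xi_2-p-q+r)!\bigr)$ times the common value, overshooting \eqref{mainTh} by a factor of $\xi_2!$ that no ``normalization'' of $V_I$, $\widetilde V_J$ can absorb. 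Moreover, termwise invariance under a simultaneous adjacent transposition is not what the exchange relations provide: formulae \eqref{sym2} and \eqref{sym1} show that $V_I\bigl(\boldsymbol t,\boldsymbol z^{s_a},y\bigr)$ is a two-term linear combination of $V_{s_a(I)}(\boldsymbol t,\boldsymbol z,y)$ and $V_I(\boldsymbol t,\boldsymbol z,y)$ with nonconstant rational coefficients, so the pairs $(I,J)$ and $\bigl(s_a(I),s_a(J)\bigr)$ mix rather than exchange; disentangling this mixing across the whole $S_{\xi_2}$-orbit is exactly what the $X_{\sigma,\tau}$-calculus of Section~\ref{section6} (Lemmas~\ref{longperm}, \ref{deltalem}, \ref{sum1}, \ref{sum2}) accomplishes, and transitivity of the action is nowhere near sufficient. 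Note also the logical order: in the paper, part~(b) is an immediate corollary of \eqref{sympropf}, since its left-hand side manifestly does not depend on the representative; your plan instead takes (b) as an input, and even granting it, part~(a) does not follow.
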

\begin{proof}
Item (a) follows from Propositions $\ref{prethprop}$ and $\ref{symprop}$ given below. Item (b) is an immediate corollary of Proposition $\ref{symprop}$.

Propositions $\ref{prethprop}$ and $\ref{symprop}$ are proved in Sections $\ref{section5}$ and $\ref{section6}$, respectively.
\end{proof}

\begin{Proposition}\label{prethprop}
In the notation of Theorem $\ref{main}$, we have
\begin{gather}
{\mathbb{B}}_{\boldsymbol{\xi}}(\boldsymbol{t}) v=\prod_{a=1}^{3}\prod_{i=1}^{\xi_a}\frac{1}{t^a_i-x}\nonumber\\
\qquad{}\times\sum\limits_{p=0}^{\min(\xi_2,\xi_3)} \sum\limits_{q=0}^{\min(\xi_2,\xi_1)}\sum\limits_{r=\max (0, p+q-\xi_2)}^{\min(p,q)} \Biggl(\sum\limits_{(I,J)\in \mathcal{S}_{p,q,r,\xi_2}} \widetilde{V}_{J}\bigl(\boldsymbol{t}^{ 1}, \boldsymbol{t}^{ 2},x-\Lambda^2\bigr)
V_{I}\bigl(\boldsymbol{t}^{ 3}, \boldsymbol{t}^{ 2},x-\Lambda^3\bigr)\nonumber\\
\qquad{}\hphantom{\times\sum\limits_{p=0}^{\min(\xi_2,\xi_3)} \sum\limits_{q=0}^{\min(\xi_2,\xi_1)}\sum\limits_{r=\max (0, p+q-\xi_2)}^{\min(p,q)} \Biggl(}{}
\times e_{32}^{\xi_2-p-q+r} e_{31}^{q-r} e_{42}^{p-r} e_{41}^{r} e_{21}^{\xi_1-q} e_{43}^{\xi_3-p} v\Biggr).\label{preth}
\end{gather}
\end{Proposition}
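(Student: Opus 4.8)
The plan is to substitute the splitting formula of Proposition~\ref{propsplitting} into the explicit $Y(\mathfrak{gl}_2)$-formulae of Propositions~\ref{w1} and~\ref{w2}, evaluate the middle factor $\mathcal{T}(\boldsymbol{t}^2)$ in the evaluation module, and read off the six $\mathfrak{gl}_4$-lowering generators together with their coefficients. Since in~\eqref{splitting} operators act from right to left, I would first apply $\bigl(\psi_2(\boldsymbol{t}^2)(\mathbb{B}^{\ltwo}_{\xi_3}(\boldsymbol{t}^3))\bigr)^{\boldsymbol{b-2}}$ to~$v$. By the definition of $\psi_2(\boldsymbol{t}^2)$, this component is the coefficient of $\mathbf{w}^I$, with $I=\{i:b_i=4\}$, in $\mathbb{B}^{\ltwo}_{\xi_3}(\boldsymbol{t}^3)$ acting on $\mathbf{w}_1^{\otimes\xi_2}\otimes v$ in $L(t^2_1)\otimes\dots\otimes L(t^2_{\xi_2})\otimes V$, where $V$ carries the $Y(\mathfrak{gl}_2)$-action through the $(3,4)$-block embedding. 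Proposition~\ref{w1} then applies directly: the eigenvalue is $\bigl\langle T^{\ltwo}_{11}(u)v\bigr\rangle=1+\Lambda^3(u-x)^{-1}$ because $\psi_2(T^{\ltwo}_{11})=T^3_3$, and the residual weight function gives $\mathbb{B}^{\ltwo}_{\xi_3-|I|}v=\prod_i(t^3_i-x)^{-1}e_{43}^{\xi_3-p}v$ since $\psi_2(T^{\ltwo}_{12})=T^3_4\mapsto e_{43}(u-x)^{-1}$. Comparing with~\eqref{V} at $y=x-\Lambda^3$, this step produces $\prod_i(t^3_i-x)^{-1}\,V_I(\boldsymbol{t}^3,\boldsymbol{t}^2,x-\Lambda^3)\,e_{43}^{\xi_3-p}v$ with $p=|I|$.

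Next I would apply $\bigl(\phi_2(\boldsymbol{t}^2)(\mathbb{B}^{\ltwo}_{\xi_1}(\boldsymbol{t}^1))\bigr)^{\boldsymbol{a}}$ to the vector $e_{43}^{\xi_3-p}v$. By~\eqref{eijkl}, $e_{43}$ commutes with $e_{11}$, $e_{22}$ and $e_{12}$, and $e_{12}v=0$, so $e_{43}^{\xi_3-p}v$ is again a weight singular vector for the $(1,2)$-block $Y(\mathfrak{gl}_2)$-action with unchanged eigenvalue $\bigl\langle T^{\ltwo}_{22}(u)\bigr\rangle=1+\Lambda^2(u-x)^{-1}$. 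Proposition~\ref{w2} then yields, for $J=\{i:a_i=1\}$ and $q=|J|$, the factor $\prod_i(t^1_i-x)^{-1}\,\widetilde{V}_J(\boldsymbol{t}^1,\boldsymbol{t}^2,x-\Lambda^2)\,e_{21}^{\xi_1-q}e_{43}^{\xi_3-p}v$, using $\phi_2(T^{\ltwo}_{12})=T^1_2\mapsto e_{21}(u-x)^{-1}$ and $[e_{21},e_{43}]=0$. Finally the middle factor $\mathcal{T}(\boldsymbol{t}^2)^{\boldsymbol{a}}_{\boldsymbol{b}}=\prod_{i=1}^{\xi_2}T(t^2_i)^{a_i}_{b_i}$ acts; since $a_i\in\{1,2\}$ and $b_i\in\{3,4\}$, each factor equals $e_{b_ia_i}(t^2_i-x)^{-1}$, and $e_{31}$, $e_{32}$, $e_{41}$, $e_{42}$ mutually commute by~\eqref{eijkl}. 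Sorting the positions by whether $i$ lies in $I\cap J$, $I\setminus J$, $J\setminus I$, or $(I\cup J)^*$ gives $e_{41}^{r}$, $e_{42}^{p-r}$, $e_{31}^{q-r}$, $e_{32}^{\xi_2-p-q+r}$ with $r=|I\cap J|$; as $\mathcal{T}$ stands leftmost in~\eqref{splitting}, these generators sit to the left of $e_{21}^{\xi_1-q}e_{43}^{\xi_3-p}$ with no commutator corrections, reproducing the ordering in~\eqref{preth}.

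It then remains to assemble everything. The scalar factors $(t^a_i-x)^{-1}$ from the three steps combine into the prefactor $\prod_{a=1}^3\prod_{i=1}^{\xi_a}(t^a_i-x)^{-1}$, and the sum over $(\boldsymbol{a},\boldsymbol{b})$ becomes a sum over pairs of subsets $(I,J)\subset\{1,\dots,\xi_2\}$. Grouping by $(p,q,r)=(|I|,|J|,|I\cap J|)$, whose admissible range is exactly $p\le\min(\xi_2,\xi_3)$, $q\le\min(\xi_2,\xi_1)$, $\max(0,p+q-\xi_2)\le r\le\min(p,q)$, i.e.\ $(I,J)\in\mathcal{S}_{p,q,r,\xi_2}$, collects the coefficient into $\sum_{(I,J)\in\mathcal{S}_{p,q,r,\xi_2}}\widetilde{V}_J(\boldsymbol{t}^1,\boldsymbol{t}^2,x-\Lambda^2)\,V_I(\boldsymbol{t}^3,\boldsymbol{t}^2,x-\Lambda^3)$ times the $(p,q,r)$-monomial, which is~\eqref{preth}. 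The step I expect to be most delicate is the application of Proposition~\ref{w2}: the opposite coproduct $\widetilde{\Delta}$ in the definition of $\phi_2(\boldsymbol{t}^2)$ reverses the order of the $\bar{L}$-tensor factors, so one must check carefully that under this reversal the component index $\boldsymbol{a}$ corresponds to the subset $J$ in precisely the way that makes the attached function equal to $\widetilde{V}_J(\boldsymbol{t}^1,\boldsymbol{t}^2,x-\Lambda^2)$ (the reversed indexing built into~\eqref{Ft} and~\eqref{Vt} is tailored for this) and makes $e_{31}$ occur $q-r$ times and $e_{32}$ occur $\xi_2-p-q+r$ times. By contrast, the verifications that $e_{43}^{\xi_3-p}v$ remains $(1,2)$-block singular and that the two eigenvalue computations produce $x-\Lambda^2$ and $x-\Lambda^3$ are routine consequences of~\eqref{eijkl} and the evaluation-module eigenvalues.
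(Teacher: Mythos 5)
Your proposal is correct and takes essentially the same route as the paper's own proof in Section~\ref{section5}: substitute the splitting formula of Proposition~\ref{propsplitting}, evaluate the $\psi_2$- and $\phi_2$-components via Propositions~\ref{w1} and~\ref{w2} in the modules pulled back through the two block embeddings (including the check that $e_{43}^{\xi_3-p}v$ remains weight singular for the upper-left block, with eigenvalue $1+\Lambda^2(u-x)^{-1}$), read off the middle factor $\mathcal{T}\bigl(\boldsymbol{t}^2\bigr)^{\boldsymbol{a}}_{\boldsymbol{b}}$ as a product of mutually commuting generators $e_{b_ia_i}$, and regroup the sum over pairs $(I,J)$ by the triple $(p,q,r)$. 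The delicate point you flag, namely matching the component index $\boldsymbol{a}$ to the subset $J$ under the opposite coproduct and the reversed order of the $\bar{L}$-factors, is exactly what the reversed indexing in formulae~\eqref{Ft} and~\eqref{Vt} is built to absorb, just as in the paper's derivation of~\eqref{step3}.
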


\begin{Proposition}\label{symprop}
In the notation of Theorem $\ref{main}$, we have
\begin{gather}
\sum\limits_{(I,J)\in \mathcal{S}_{p,q,r,\xi_2}} \widetilde{V}_{J}\bigl(\boldsymbol{t}^{ 1}, \boldsymbol{t}^{ 2},x-\Lambda^2\bigr) V_{I}\bigl(\boldsymbol{t}^{ 3}, \boldsymbol{t}^{ 2},x-\Lambda^3\bigr)
\nonumber\\
\qquad{} =\frac{ \Symb_{\boldsymbol{t}^2 } \bigl(V_{\check{I}_0}\bigl(\boldsymbol{t}^3, \boldsymbol{t}^2,x-\Lambda^3\bigr) \widetilde{V}_{J_0}\bigl( \boldsymbol{t}^1, \check{\boldsymbol{t}}^2,x-\Lambda^2\bigr)\bigr) }{(p-r)!(q-r)! r!(\xi_2-p-q+r)!},\label{sympropf}
\end{gather}
where $(I_0,J_0)$ is any pair from $\mathcal S_{p,q,r,\xi_2}$.
\end{Proposition}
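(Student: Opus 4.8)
The plan is to prove \eqref{sympropf} by an orbit–stabilizer argument for the symmetric group $S_{\xi_2}$ acting on the shared variables $\boldsymbol{t}^2$, after first reducing the identity to a purely rational-function statement in $\boldsymbol{t}^2$ and the subsets $I,J$. First I would strip off the symmetrizations over $\boldsymbol{t}^1$ and $\boldsymbol{t}^3$. By \eqref{V} and \eqref{Vt}, $V_I\bigl(\boldsymbol{t}^3,\boldsymbol{t}^2,\cdot\bigr)$ carries the operator $\tfrac{1}{(\xi_3-p)!}\Symb_{\boldsymbol{t}^3}$ and $\widetilde V_J\bigl(\boldsymbol{t}^1,\boldsymbol{t}^2,\cdot\bigr)$ carries $\tfrac{1}{(\xi_1-q)!}\Symb_{\boldsymbol{t}^1}$, while the factors $\prod(t^3_a-y)$ and $\prod(t^1_{\xi_1-a+1}-y)$ depend on $I,J$ only through the cardinalities $p=|I|$, $q=|J|$. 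Since $\Symb_{\boldsymbol{t}^1}$ and $\Symb_{\boldsymbol{t}^3}$ commute with every operation performed on $\boldsymbol{t}^2$ and occur with identical prefactors on both sides, applying the same operator to both sides reduces \eqref{sympropf} to the equality of arguments at the level of the functions $F_I$, $\widetilde F_J$ of \eqref{F}, \eqref{Ft}; that is, to
\[
\sum_{(I,J)\in\mathcal S_{p,q,r,\xi_2}}\widetilde F_J\bigl(\boldsymbol t^1,\boldsymbol t^2\bigr)\,F_I\bigl(\boldsymbol t^3,\boldsymbol t^2\bigr)
=\frac{\Symb_{\boldsymbol t^2}\bigl(F_{\check I_0}\bigl(\boldsymbol t^3,\boldsymbol t^2\bigr)\,\widetilde F_{J_0}\bigl(\boldsymbol t^1,\check{\boldsymbol t}^2\bigr)\bigr)}{r!\,(p-r)!\,(q-r)!\,(\xi_2-p-q+r)!}.
\]

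The heart of the matter is the behaviour of the single term on the right under permutations of $\boldsymbol{t}^2$. I would establish the key lemma that, for each $\sigma\in S_{\xi_2}$, the summand appearing in $\Symb_{\boldsymbol t^2}$ satisfies
\[
F_{\check I_0}\bigl(\boldsymbol t^3,\sigma\boldsymbol t^2\bigr)\,\widetilde F_{J_0}\bigl(\boldsymbol t^1,\sigma\check{\boldsymbol t}^2\bigr)\,Q_{\xi_2}\bigl(\sigma\boldsymbol t^2\bigr)
=\widetilde F_{J_\sigma}\bigl(\boldsymbol t^1,\boldsymbol t^2\bigr)\,F_{I_\sigma}\bigl(\boldsymbol t^3,\boldsymbol t^2\bigr),
\]
where $(I_\sigma,J_\sigma)$ is the pair obtained by transporting $(I_0,J_0)$ through $\sigma$. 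The role of the weight $Q_{\xi_2}(\boldsymbol t^2)=\prod_{i<j}\frac{t^2_i-t^2_j-1}{t^2_i-t^2_j}$ is precisely to absorb the discrepancy between the ordering-dependent products $\prod_{m>i_a}\frac{t-z_m\pm1}{t-z_m}$ in $F$ and $\widetilde F$ when the $z$-variables $\boldsymbol t^2$ are permuted out of increasing order; the reversal $\check{}$ on $I_0$ and on $\boldsymbol t^2$ inside $\widetilde F_{J_0}$ is the bookkeeping that aligns the two opposite product conventions of \eqref{F} and \eqref{Ft}, so that this absorption is uniform. Verifying this cancellation—that the permuted ratio products times $Q_{\xi_2}$ telescope into the canonically ordered products of $F_{I_\sigma}$ and $\widetilde F_{J_\sigma}$—is the main computational obstacle, and I expect it to reduce to a rearrangement identity for such products that can be checked transposition by transposition.

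Granting the lemma, the conclusion is combinatorial. The assignment $\sigma\mapsto(I_\sigma,J_\sigma)$ is a surjection $S_{\xi_2}\to\mathcal S_{p,q,r,\xi_2}$, because $S_{\xi_2}$ acts transitively on pairs of subsets with prescribed block sizes $|I\cap J|=r$, $|I\setminus J|=p-r$, $|J\setminus I|=q-r$, and $|(I\cup J)^{*}|=\xi_2-p-q+r$. The fibre over each pair is a coset of the stabilizer of $(I_0,J_0)$, namely the Young subgroup $S_r\times S_{p-r}\times S_{q-r}\times S_{\xi_2-p-q+r}$ permuting within the four blocks, of order $r!\,(p-r)!\,(q-r)!\,(\xi_2-p-q+r)!$. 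Hence each pair $(I,J)$ is produced exactly that many times by the sum over $\sigma$, which cancels the denominator and returns the left-hand side, proving the reduced identity and therefore \eqref{sympropf}. Finally, since the orbit of $(I_0,J_0)$ is all of $\mathcal S_{p,q,r,\xi_2}$, the right-hand side is manifestly the same for every representative $(I_0,J_0)$, which is exactly assertion (b) of Theorem~\ref{main}.
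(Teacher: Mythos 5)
Your proposal correctly reproduces one genuine step of the paper's argument: the orbit--stabilizer rewriting of the left-hand side as $\frac{1}{C_{p,q,r,k}}\sum_{\sigma\in S_k}\widetilde V_{\sigma(J_0)}(\boldsymbol z)V_{\sigma(I_0)}(\boldsymbol z)$ with $\boldsymbol z=\boldsymbol t^2$, which is exactly formula~\eqref{twosums}, with the same transitivity and Young-subgroup count (your factorial $(\xi_2-p-q+r)!$ is the correct one; the $(\xi_2-p-q-r)!$ in the displayed statement is a typo in the source). The preliminary reduction stripping $\Symb_{\boldsymbol t^1}$, $\Symb_{\boldsymbol t^3}$ and the cardinality-only factors $\prod_a(t_a-y)$ is sound, though the paper does not need it. But your key lemma --- the term-by-term identity $F_{\check I_0}\bigl(\boldsymbol t^3,(\boldsymbol t^2)^\sigma\bigr)\widetilde F_{J_0}\bigl(\boldsymbol t^1,(\check{\boldsymbol t}^2)^\sigma\bigr)Q_{\xi_2}\bigl((\boldsymbol t^2)^\sigma\bigr)=\widetilde F_{J_\sigma}\bigl(\boldsymbol t^1,\boldsymbol t^2\bigr)F_{I_\sigma}\bigl(\boldsymbol t^3,\boldsymbol t^2\bigr)$ --- is false, and it is precisely where the difficulty of the proposition lives. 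Counterexample: take $k=\xi_2=2$, $p=1$, $q=r=0$, $\xi_3=1$, $I_0=\{1\}$, so $\check I_0=\{2\}$ and $F_{\{2\}}(t;z_1,z_2)=\frac{1}{t-z_2}$. For $\sigma=\mathrm{id}$ your lemma would assert $\frac{1}{t-z_2}\cdot\frac{z_1-z_2-1}{z_1-z_2}=F_{I_{\mathrm{id}}}(t;z_1,z_2)$ for some one-element set $I_{\mathrm{id}}$; but the left side has a pole at $z_1=z_2$, while $F_{\{1\}}=\frac{t-z_2+1}{(t-z_1)(t-z_2)}$ and $F_{\{2\}}=\frac{1}{t-z_2}$ do not. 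Individual terms of $\Symb_{\boldsymbol t^2}$ always carry the poles of $Q_{\xi_2}$; only the full sum is regular there (the residues cancel pairwise between $\sigma$ and $\sigma s_a$), so no assignment $\sigma\mapsto(I_\sigma,J_\sigma)$ whatsoever can make the summands match bijectively, and the orbit--stabilizer multiplicity count that you build on top of the lemma collapses with it.

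The structural reason for the failure is that under permutations of $\boldsymbol t^2$ these functions do not simply transport their index sets: by the paper's exchange lemma, formulas \eqref{sym2} and \eqref{sym1}, $V_I(\boldsymbol t,\boldsymbol z^{s_a},y)$ is a \emph{two-term} linear combination of $V_{s_a(I)}(\boldsymbol t,\boldsymbol z,y)$ and $V_I(\boldsymbol t,\boldsymbol z,y)$ with coefficients rational in $z_a-z_{a+1}$. Converting the orbit sum $\sum_\sigma\widetilde V_{\sigma(J_0)}(\boldsymbol z)V_{\sigma(I_0)}(\boldsymbol z)$ into the symmetrized single-pair expression therefore requires resumming these mixed contributions globally, which is what the machinery of Section~\ref{section6} does: the elements $\hat\sigma=\sum_\tau X_{\sigma,\tau}(\boldsymbol z)\ddot\tau\dot\sigma$ of \eqref{sighat} realize the exchange relations as a representation of $S_k$, the inversion identity \eqref{deltalemf} of Lemma~\ref{deltalem} is inserted as a resolution of the identity in \eqref{longeq}, and the two resulting sums are evaluated by induction on the length of the permutation in Lemmas~\ref{sum1} and~\ref{sum2}, yielding \eqref{keypoint}. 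This is not ``a rearrangement identity checked transposition by transposition'' in your sense --- a single transposition already produces two terms, not one --- and your plan contains no mechanism for recombining them. The combinatorial shell of your argument is right; the analytic core is missing.
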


Below we reformulate Theorem~\ref{main} in a more closed form.

\begin{Theorem}\label{main2}
Let $V$ be a $\mathfrak{gl}_4$-module and $v\in V$ a $\mathfrak{gl}_4$-singular vector of weight $\bigl(\Lambda^1,\Lambda^2,\Lambda^3,\Lambda^4\bigr)$. Let $\xi_1$, $\xi_2$, $\xi_3$ be nonnegative integers, $\boldsymbol{t}^1=\bigl(t^1_1,\dots,t^1_{\xi_1}\bigr)$, $\boldsymbol{t}^2=\bigl(t^2_1,\dots,t^2_{\xi_2}\bigr)$, $\boldsymbol{t}^3=\bigl(t^3_1,\dots,t^3_{\xi_3}\bigr)$, and $\boldsymbol{t}=\bigl(\boldsymbol{t}^1,\boldsymbol{t}^2,\boldsymbol{t}^3\bigr)$. For every triple $(p,q,r)$, $p=0,\dots,\min(\xi_2,\xi_3)$, $q=0,\dots,\min(\xi_2,\xi_1)$, $r=\max (0, p+q-\xi_2),\dots,\min(p,q)$, fix two sequences $\boldsymbol{i}=\{i_1<\dots<i_p\}$ and $\boldsymbol{j}=\{j_1<\dots<j_q\}$, such that $|\{i_1,\dots,i_p\}\cap \{j_1,\dots,j_q\}|=r$. Then,
\begin{enumerate}\itemsep=0pt
\item[$(a)$] In the evaluation $Y(\mathfrak{gl}_4)$-module $V(x)$, one has
\begin{gather}
{\mathbb{B}}_{\boldsymbol{\xi}}(\boldsymbol{t}) v =\prod_{a=1}^{3}\prod_{i=1}^{\xi_a}\frac{1}{t^a_i-x} \nonumber\\
 \hphantom{{\mathbb{B}}_{\boldsymbol{\xi}}(\boldsymbol{t}) v =}{}
 \times\sum\limits_{p=0}^{\min(\xi_2,\xi_3)} \sum\limits_{q=0}^{\min(\xi_2,\xi_1)}\sum\limits_{r=\max (0, p+q-\xi_2)}^{\min(p,q)}
 \Symb_{\boldsymbol{t}^1} \Symb_{\boldsymbol{t}^2} \Symb_{\boldsymbol{t}^3} G_{\boldsymbol{i},\boldsymbol{j}}(\boldsymbol{t}) \nonumber\\
 \hphantom{{\mathbb{B}}_{\boldsymbol{\xi}}(\boldsymbol{t}) v =}{}
 \qquad\times\frac{e_{32}^{\xi_2-p-q+r} e_{31}^{q-r} e_{42}^{p-r} e_{41}^{r} e_{21}^{\xi_1-q} e_{43}^{\xi_3-p}v}{(\xi_2-p-q+r)!(q-r)!(p-r)! r!(\xi_1-q)!(\xi_3-p)!}, \label{mainTh2}
\end{gather}
where
\begin{align}
G_{\boldsymbol{i},\boldsymbol{j}}(\boldsymbol{t})={}& \prod_{a=1}^{p} \Biggl(\frac{t^3_a-x+\Lambda^3}{t_a^3-t^2_{i_a}}\prod_{ m=i_a+1}^{\xi_2} \frac{t_a^3-t^2_m+1}{t_a^3-t^2_m} \Biggr) \nonumber\\
&{\times}\prod_{s=1}^{q} \Biggl( \frac{t_{\xi_1-q+s}^1-x+\Lambda^2}{t^2_{j_{s}}-t^1_{\xi_1-q+s}}\prod_{ l=1}^{j_s-1} \frac{t^2_l-t_{\xi_1-q+s}^1+1}{t^2_l-t^1_{\xi_1-q+s}} \Biggr).\label{Gfu}
\end{align}
\item[$(b)$] The function $\Symb_{\boldsymbol{t}^1} \hspace{-0.3pt} \Symb_{\boldsymbol{t}^2}\hspace{-0.3pt} \Symb_{\boldsymbol{t}^3}\hspace{-0.3pt} G_{\boldsymbol{i},\boldsymbol{j}}(\boldsymbol{t})$ does not depend on the choice of the sequences~$\boldsymbol{i}$,~$\boldsymbol{j}$.
\end{enumerate}
\end{Theorem}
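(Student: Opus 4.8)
The plan is to obtain Theorem~\ref{main2} as a direct rewriting of Theorem~\ref{main}, by unfolding the definitions \eqref{V} and \eqref{Vt} of the functions $V_I$ and $\widetilde V_I$ inside \eqref{mainTh}. Fixing a representative pair $(I_0,J_0)=(I_{p,q,r},J_{p,q,r})\in\mathcal S_{p,q,r,\xi_2}$, I would substitute
\[
V_{\check I_0}\bigl(\boldsymbol t^3,\boldsymbol t^2,x-\Lambda^3\bigr)=\frac{1}{(\xi_3-p)!}\,\Symb_{\boldsymbol t^3}\Bigl(F_{\check I_0}\bigl(\boldsymbol t^3,\boldsymbol t^2\bigr)\prod_{a=1}^{p}\bigl(t^3_a-x+\Lambda^3\bigr)\Bigr),
\]
together with the analogous expansion of $\widetilde V_{J_0}\bigl(\boldsymbol t^1,\check{\boldsymbol t}^2,x-\Lambda^2\bigr)$ as $\frac{1}{(\xi_1-q)!}\,\Symb_{\boldsymbol t^1}(\cdots)$. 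Since $\boldsymbol t^1$, $\boldsymbol t^2$, $\boldsymbol t^3$ are disjoint groups of variables, the two inner barred symmetrizations commute with the outer $\Symb_{\boldsymbol t^2}$ of \eqref{mainTh}, and their $Q$-factors simply multiply; hence the three operators merge into a single $\Symb_{\boldsymbol t^1}\Symb_{\boldsymbol t^2}\Symb_{\boldsymbol t^3}$ acting on the product of the two integrands. The new prefactors $1/(\xi_3-p)!$ and $1/(\xi_1-q)!$ combine with the denominator appearing in \eqref{mainTh} to reproduce exactly the denominator $(\xi_2-p-q+r)!(q-r)!(p-r)!\,r!\,(\xi_1-q)!(\xi_3-p)!$ of \eqref{mainTh2}.

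It then remains to identify the resulting integrand
\[
F_{\check I_0}\bigl(\boldsymbol t^3,\boldsymbol t^2\bigr)\,\widetilde F_{J_0}\bigl(\boldsymbol t^1,\check{\boldsymbol t}^2\bigr)\prod_{a=1}^{p}\bigl(t^3_a-x+\Lambda^3\bigr)\prod_{a=1}^{q}\bigl(t^1_{\xi_1-a+1}-x+\Lambda^2\bigr)
\]
with $G_{\boldsymbol i,\boldsymbol j}(\boldsymbol t)$ from \eqref{Gfu}, under the correspondence in which $\boldsymbol i$ records the $\boldsymbol t^2$-components selected by $F_{\check I_0}$ and $\boldsymbol j$ records those selected by $\widetilde F_{J_0}(\,\cdot\,,\check{\boldsymbol t}^2)$, so that $|\boldsymbol i\cap\boldsymbol j|=|I_0\cap J_0|=r$. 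The factor $F_{\check I_0}(\boldsymbol t^3,\boldsymbol t^2)$ matches the first line of \eqref{Gfu} at once by \eqref{F}. For the second factor one expands $\widetilde F_{J_0}(\boldsymbol t^1,\check{\boldsymbol t}^2)$ using \eqref{Ft} and renames the dummy product index $m\mapsto\xi_2+1-m$, which converts the reversed arguments $\check t^2_m=t^2_{\xi_2+1-m}$ back into the variables $t^2_l$ of \eqref{Gfu}. I would exploit here that the whole expression sits under $\Symb_{\boldsymbol t^2}$, so that any relabeling of $\boldsymbol t^2$ by a permutation (in particular the reversal $t^2_l\mapsto t^2_{\xi_2+1-l}$) leaves the value unchanged.

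I expect this last identification to be the only delicate point, and the main obstacle of the proof: one has to track how the two \emph{independent} reversals, the reversed index set $\check I_0$ inside $F_{\check I_0}$ and the reversed tuple $\check{\boldsymbol t}^2$ inside $\widetilde F_{J_0}$, interact with the ``decreasing'' ranges of \eqref{F} and \eqref{Ft} to produce the ranges $\prod_{m=i_a+1}^{\xi_2}$ and $\prod_{l=1}^{\xi_2-j_s}$ of \eqref{Gfu}, and to confirm that the shifted denominators land on the correct components $t^2_{i_a}$ and $t^2_{j_s}$ after the permutation of $\boldsymbol t^2$ is taken into account; this is pure index bookkeeping but easy to get wrong. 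Finally, part~$(b)$ of Theorem~\ref{main2} is inherited from part~$(b)$ of Theorem~\ref{main}: the latter asserts that the symmetrized expression depends on $(I_0,J_0)$ only through the cardinalities $(p,q,r)$, and the above identification translates this into the statement that $\Symb_{\boldsymbol t^1}\Symb_{\boldsymbol t^2}\Symb_{\boldsymbol t^3}G_{\boldsymbol i,\boldsymbol j}$ does not depend on the chosen sequences $\boldsymbol i$, $\boldsymbol j$.
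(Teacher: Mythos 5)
Your proposal is correct and follows essentially the same route as the paper's own proof of Theorem~\ref{main2}: the paper likewise derives \eqref{mainTh2}--\eqref{Gfu} by unfolding the definitions \eqref{V} and \eqref{Vt} inside \eqref{mainTh} to get the explicit forms \eqref{VIg} and \eqref{VJg} (your dummy-index renaming $m\mapsto\xi_2+1-m$ is the paper's reindexing $b=q+1-s$), tacitly merging the symmetrizers over the disjoint groups $\boldsymbol t^1$, $\boldsymbol t^2$, $\boldsymbol t^3$ and absorbing $1/(\xi_1-q)!$, $1/(\xi_3-p)!$ into the denominator, and then obtaining part~$(b)$ as a reformulation of part~$(b)$ of Theorem~\ref{main}. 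One caution on your closing remark: the appeal to $\Symb_{\boldsymbol t^2}$-invariance is superfluous and would be unsound if used to undo the reversal $\check{\boldsymbol t}^2$ in the factor $\widetilde F_{J_0}$ alone, since a relabeling of $\boldsymbol t^2$ under $\Symb_{\boldsymbol t^2}$ must act on both factors simultaneously and in general alters $\bigl|\check I_0\cap J_0\bigr|$; the identification with $G_{\boldsymbol i,\boldsymbol j}$ requires only the dummy-index renaming combined with the reversal already built into the correspondence $J_{p,q,r}=\{\xi_2-j_1+1,\dots,\xi_2-j_q+1\}$, exactly as in the paper.
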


\begin{proof}
Given the pair $(I_{p,q,r},J_{p,q,r})$ from the formulation of Theorem~\ref{main}, define the sequences $\boldsymbol{i}=\{i_1<\dots<i_p\}$ and $\boldsymbol{j}=\{j_1<\dots<j_q\}$ by the rule
\begin{gather*}
I_{p,q,r}=\{\xi_2-i_1+1,\xi_2-i_2+1,\dots,\xi_2-i_p+1\},\\ J_{p,q,r}=\{\xi_2-j_1+1,\xi_2-j_2+1,\dots,\xi_2-j_q+1\}.
\end{gather*}
Notice that
$\{i_1,\dots,i_p\}=\check{I}_{p,q,r}$, $\{j_1,\dots,j_q\}=\check{J}_{p,q,r}$,
and
\begin{equation*}|\{i_1,\dots,i_p\}\cap \{j_1,\dots,j_q\}|=|I_{p,q,r}\cap J_{p,q,r}|=r.\end{equation*}
Then combining formulae~\eqref{F} and~\eqref{Ft}--\eqref{Vt}, we obtain that
\begin{equation}\label{VIg}
V_{\check{I}_{p,q,r}}\bigl(\boldsymbol{t}^3,\boldsymbol{t}^2,x-\Lambda^3\bigr)=\frac{1}{(\xi_3-p)!}\Symb_{\boldsymbol{t}^3}
\prod_{a=1}^{p} \Biggl(\frac{t^3_a-x+\Lambda^3}{t_a^3-t^2_{i_a}}\prod_{ m=i_a+1}^{\xi_2} \frac{t_a^3-t^2_m+1}{t_a^3-t^2_m}\Biggr)
\end{equation}
and
\begin{gather*}
\widetilde{V}_{J_{p,q,r}}\bigl(\boldsymbol{t}^1,\check{\boldsymbol{t}}^2,x-\Lambda^2\bigr)=\frac{1}{(\xi_1-q)!}\Symb_{\boldsymbol{t}^1}
\prod_{b=1}^{q} \Biggl( \frac{t_{\xi_1-b+1}^1-x+\Lambda^2}{t^2_{j_{q-b+1}}-t^1_{\xi_1-b+1}}\prod_{ l=1}^{j_{q-b+1}-1} \frac{t^2_l-t_{\xi_1-b+1}^1+1}{t^2_l-t^1_{\xi_1-b+1}}\Biggr).
\end{gather*}
After substituting $b=q+1-s$, the last formula becomes
\begin{gather}
\widetilde{V}_{J_{p,q,r}}\bigl(\boldsymbol{t}^1,\check{\boldsymbol{t}}^2,x-\Lambda^2\bigr)\nonumber\\
\qquad{}=\frac{1}{(\xi_1-q)!}\Symb_{\boldsymbol{t}^1}
\prod_{s=1}^{q} \Biggl( \frac{t_{\xi_1-q+s}^1-x+\Lambda^2}{t^2_{j_{s}}-t^1_{\xi_1-q+s}}\prod_{ l=1}^{j_s-1} \frac{t^2_l-t_{\xi_1-q+s}^1+1}{t^2_l-t^1_{\xi_1-q+s}}\Biggr).\label{VJg}
\end{gather}
Plugging~\eqref{VIg} and~\eqref{VJg} into formula~\eqref{mainTh}, we obtain formulae~\eqref{mainTh2} and~\eqref{Gfu}.

Item (b) of Theorem~\ref{main2} is a reformulation of item (b) of Theorem~\ref{main}.
\end{proof}

\begin{Example} Below we give two examples of natural choices of the sequences $\boldsymbol{i}$, $\boldsymbol{j}$ in Theorem~\ref{main2} and write down the corresponding expressions for the function $G_{\boldsymbol{i},\boldsymbol{j}}( \boldsymbol{t})$, see formula~\eqref{Gfu}.
\begin{enumerate}\itemsep=0pt
\item[$(a)$]
$\boldsymbol{i}=\boldsymbol{i}_1=\{1<\dots<p\}$, $\boldsymbol{j}=\boldsymbol{j}_1=\{p+1-r<\dots<p+q-r\}$. Then
\begin{align*}
G_{\boldsymbol{i}_1,\boldsymbol{j}_1}( \boldsymbol{t})={}& \prod_{a=1}^{p}\Biggl( \frac{t^3_a-x+\Lambda^3}{t_a^3-t^2_a}\prod_{ m=a+1}^{\xi_2} \frac{t_a^3-t^2_m+1}{t_a^3-t^2_m}\Biggr)\\
&{\times}\prod_{c=1}^{q} \Biggl( \frac{t_{\xi_1-q+c}^1-x+\Lambda^2}{t^2_{p-r+c}-t^1_{\xi_1-q+c}}\prod_{ l=1}^{p-r+c-1} \frac{t^2_l-t_{\xi_1-q+c}^1+1}{t^2_l-t^1_{\xi_1-q+c}}\Biggr).
\end{align*}

\item[$(b)$] $\boldsymbol{i}=\boldsymbol{i}_2=\{q+1-r< \dots<q+p-r\}$, $\boldsymbol{j}=\boldsymbol{j}_2=\{1<\dots<q\}$. Then
\begin{align*}
G_{\boldsymbol{i}_2,\boldsymbol{j}_2}( \boldsymbol{t})={}& \prod_{a=1}^{p}\Biggl( \frac{t^3_a-x+\Lambda^3}{t_a^3-t^2_{q-r+a}}\prod_{ m=q-r+a+1}^{\xi_2} \frac{t_a^3-t^2_m+1}{t_a^3-t^2_m}\Biggr)\\
&{\times}\prod_{b=1}^{q} \Biggl( \frac{t_{\xi_1-b+1}^1-x+\Lambda^2}{t^2_{b}-t^1_{\xi_1-b+1}} \prod_{ l=1}^{b-1} \frac{t^2_l-t_{\xi_1-b+1}^1+1}{t^2_l-t^1_{\xi_1-b+1}}\Biggr).
\end{align*}
\end{enumerate}
Notice that the equality
\begin{equation*}
\Symb_{\boldsymbol{t}^1} \Symb_{\boldsymbol{t}^2} \Symb_{\boldsymbol{t}^3} G_{\boldsymbol{i}_1,\boldsymbol{j}_1}(\boldsymbol{t}) = \Symb_{\boldsymbol{t}^1} \Symb_{\boldsymbol{t}^2} \Symb_{\boldsymbol{t}^3} G_{\boldsymbol{i}_2,\boldsymbol{j}_2}(\boldsymbol{t}), \end{equation*}
stated in item (b) of Theorem~\ref{main2}, is not obvious.
\end{Example}

\section{Proof of Proposition~\ref{prethprop}}\label{section5}
Let $V$ be a $\mathfrak{gl}_4$-module and $v\in V$ a $\mathfrak{gl}_4$-singular vector of weight $\bigl(\Lambda^1,\Lambda^2,\Lambda^3,\Lambda^4\bigr)$. Let $\xi_1$, $\xi_2$, $\xi_3$ be nonnegative integers, $\boldsymbol{t}^1=\bigl(t^1_1,\dots,t^1_{\xi_1}\bigr)$, $\boldsymbol{t}^2=\bigl(t^2_1,\dots,t^2_{\xi_2}\bigr)$, $\boldsymbol{t}^3=\bigl(t^3_1,\dots,t^3_{\xi_3}\bigr)$ and $\boldsymbol{t}=\bigl(\boldsymbol{t}^1,\boldsymbol{t}^2,\boldsymbol{t}^3\bigr)$.
Recall that in the evaluation $Y(\mathfrak{gl}_4)$-module $V(x)$, we have $T^a_b(u)=\delta_{ab}+ e_{ba}(u-x)^{-1}$, thus
\begin{equation*}T^a_a(u)v=\frac{u-x+\Lambda^a}{u-x} v.\end{equation*}
By Proposition $\ref{propsplitting}$,
\begin{equation}\label{step1}
{\mathbb{B}}_{\boldsymbol{\xi}}(\boldsymbol{t})v =\
\sum_{\boldsymbol{a},\boldsymbol{b}} \Bigl( \overset{[2]}{\mathbb{T}} \bigl(\boldsymbol{t}^{2}\bigr)\Bigr)^{\boldsymbol{a}}_{\boldsymbol{b}} \bigl(\phi_2\bigl(\boldsymbol{t}^ 2\bigr)\bigl(\mathbb{B}_{\xi_1}^{\ltwo}\bigl(\boldsymbol{t}^{1}\bigr)\bigr)\bigr)^{\boldsymbol{a}} \bigl(\psi_2\bigl(\boldsymbol{t}^{2}\bigr)\bigl(\mathbb{B}_{\xi_3}^{\ltwo}\bigl(\boldsymbol{t}^{ 3}\bigr)\bigr)\bigr)^{\boldsymbol{b-2}} v,
\end{equation}
where the sum is taken over all sequences $\boldsymbol{a}=(a_1,a_2,\dots,a_{\xi_2})$, $\boldsymbol{b}=(b_1,b_2,\dots,b_{\xi_2})$, such that $a_i\in \{1,2\}$, and $b_i\in \{3,4\}$ for all $i=1,\dots,\xi_2$.

Let ${ }^{\psi} V(x)$ be the $Y(\mathfrak{g l}_{2})$-module obtained by pulling back the module $V(x)$ through the embedding $\psi_2$.
To compute \smash{$\bigl(\psi_2\bigl(\boldsymbol{t}^{2}\bigr)\bigl(\mathbb{B}_{\xi_3}^{\ltwo}\bigl(\boldsymbol{t}^{ 3}\bigr)\bigr)\bigr)^{\boldsymbol{b}} v$}, we take the weight function \smash{$\mathbb{B}^{\ltwo}_{\xi_3}\bigl(\boldsymbol{t}^3\bigr)\bigl(\mathbf{w}_{1}^{\otimes \xi_2}\otimes v\bigr)$} in~the $Y(\mathfrak{g l}_{2})$-module $L\bigl(t_{1}^{2}\bigr) \otimes \cdots \otimes L\bigl(t_{\xi_2}^2 \bigr) \otimes { }^{\psi} V(x) $ and apply Proposition $\ref{w1}$ for $k=\xi_2$. Then~we~obtain
\begin{align}
&\bigl(\psi_2\bigl(\boldsymbol{t}^{2}\bigr)\bigl(\mathbb{B}_{\xi_3}^{\ltwo}\bigl(\boldsymbol{t}^{ 3}\bigr)\bigr)\bigr)^{\boldsymbol{b-2}} v\nonumber\\
&\qquad{}=\frac{1}{(\xi_{3}-|I|)!} \Symb_{ \boldsymbol{t}^{3}}\Biggl[F_I\bigl(\boldsymbol{t}^3,\boldsymbol{t}^2\bigr) \prod_{m=1}^{|I|} \frac{t^3_m-x+\Lambda^3}{t^3_m-x} \prod_{r=|I|+1}^{\xi_3} \frac{1}{t^3_r-x} \Biggr] e_{43}^{\xi_3-|I|}v, \label{step2}
\end{align}
where the subset $I\subset\{1,\dots,\xi_2\}$ and the sequence $\boldsymbol{b}=(b_1,\dots,b_{\xi_2})$ are related as follows: $b_j=3$ if $j\notin I$ and $b_j=4$ if $j\in I$. Therefore, by formula~\eqref{V} we have
\begin{equation*}
\bigl(\psi_2\bigl(\boldsymbol{t}^{2}\bigr)\bigl(\mathbb{B}_{\xi_3}^{\ltwo}\bigl(\boldsymbol{t}^{ 3}\bigr)\bigr)\bigr)^{\boldsymbol{b-2}} v= \prod_{r=1}^{\xi_3} \frac{1}{t^3_r-x} V_{I}\bigl(\boldsymbol{t}^{ 3}, \boldsymbol{t}^{ 2},x-\Lambda^3\bigr) e_{43}^{\xi_3-|I|}v.
\end{equation*}

The next step is to compute \smash{$\bigl(\phi_2\bigl(\boldsymbol{t}^{2}\bigr)\bigl(\mathbb{B}_{\xi_1}^{\ltwo}\bigl(\boldsymbol{t}^{ 1}\bigr)\bigr)\bigr)^{\boldsymbol{a}} e_{43}^{\xi_3-|I|}v$}. Notice that for any nonnegative integer $m$, we have
\begin{equation*}
T^2_1(u) e^{m}_{43}v=0, \qquad T^2_2(u) e^{m}_{43}v=\frac{u-x+\Lambda^2}{u-x}e^{m}_{43}v,\qquad T^1_1(u) e^{m}_{43}v=\frac{u-x+\Lambda^1}{u-x}e^{m}_{43}v.
\end{equation*}
Let ${ }^{\phi} V(x)$ be the $Y(\mathfrak{g l}_{2})$-module obtained by pulling back $V(x)$ through the embedding $\phi_2$.
To compute the \smash{$\bigl(\phi_2\bigl(\boldsymbol{t}^{2}\bigr)\bigl(\mathbb{B}_{\xi_1}^{\ltwo}\bigl(\boldsymbol{t}^{ 1}\bigr)\bigr)\bigr)^{\boldsymbol{a}} e_{43}^{\xi_3-|I|}v$}, we take the weight function \smash{$\mathbb{B}^{\ltwo}_{\xi_1}\bigl(\boldsymbol{t}^1\bigr)\bigl(e_{43}^{\xi_3-|I|}v\otimes \mathbf{w}_{2}^{\otimes \xi_2}\bigr)$} in the $Y(\mathfrak{g l}_{2})$-module \smash{${ }^{\phi} V(x)\otimes L\bigl(t_{\xi_2}^{2} \bigr) \otimes \cdots \otimes L\bigl(t_{1}^2\bigr) $} and apply Proposition $\ref{w2}$ for $k=\xi_2$. Then we obtain
\begin{gather}
\bigl(\phi_2\bigl(\boldsymbol{t}^{2}\bigr)\bigl(\mathbb{B}_{\xi_1}^{\ltwo}\bigl(\boldsymbol{t}^{ 1}\bigr)\bigr)\bigr)^{\boldsymbol{a}} e_{43}^{\xi_3-|I|}v\nonumber\\
\qquad{}=\frac{1}{(\xi_{1}-|J|)!} \Symb_{\boldsymbol{t}^{1}}\Biggl[\tilde{F}_J\bigl(\boldsymbol{t}^1,\boldsymbol{t}^2\bigr) \prod_{m=1}^{|J|} \frac{t^1_m-x+\Lambda^2}{t^1_m-x} \prod_{r=|J|+1}^{\xi_1} \frac{1}{t^1_r-x} \Biggr] e_{21}^{\xi_1-|J|}e_{43}^{\xi_3-|I|}v,\label{step3}
\end{gather}
where the subset $J\subset\{1,\dots,\xi_2\}$ and the sequence $\boldsymbol{a}=(a_1,\dots,a_{\xi_2})$ are related as follows: $a_j=1$ if $j\in J$ and $a_j=2$ if $j\notin J$. Therefore, by formula~\eqref{Vt} we have
\begin{equation*}
\bigl(\phi_2\bigl(\boldsymbol{t}^{2}\bigr)\bigl(\mathbb{B}_{\xi_1}^{\ltwo}\bigl(\boldsymbol{t}^{ 1}\bigr)\bigr)\bigr)^{\boldsymbol{a}} e_{43}^{\xi_3-|I|}v=\prod_{r=1}^{\xi_1} \frac{1}{t^1_r-x}\widetilde{V}_{J}\bigl(\boldsymbol{t}^{ 1}, \boldsymbol{t}^{ 2},x-\Lambda^2\bigr) e_{21}^{\xi_1-|J|}e_{43}^{\xi_3-|I|}v.
\end{equation*}

Finally, for the sequences $\boldsymbol{a}$, $\boldsymbol{b}$ that are related to the sets $I$, $J$ as above, we have
\begin{equation}\label{step4}
\Bigl(\overset{[2]}{\mathbb{T}} \bigl(\boldsymbol{t}^{2}\bigr)\Bigr)^{\boldsymbol{a}}_{\boldsymbol{b}} =\prod_{r=1}^{\xi_2} \frac{1}{t^2_r-x} e_{32}^{\xi_2-p-q+s} e_{31}^{q-s} e_{42}^{p-s} e_{41}^{s},
\end{equation}
where $p=|I|$, $q=|J|$, $s=|I\cap J|$. Now formula~\eqref{preth} follows from formulae~\eqref{step1}--\eqref{step4}.

\section{Proof of Proposition~\ref{symprop}}
\label{section6}
Consider the algebra $\mathcal{A}$ generated by two commuting copies of the symmetric group $S_k$ and rational functions of $z_1,\dots,z_k$ subject to relations~\eqref{relat} below. We denote the copies of~$S_k$ in~$\mathcal{A}$ by~\smash{$\dot S_k$} and~\smash{$\ddot S_k$}, and mark elements of \smash{$\dot S_k$} and \smash{$\ddot S_k$} by the corresponding dots, keeping the notation~$S_k$ without dots for the abstract symmetric group.

Let $\boldsymbol{z}=(z_1,\dots,z_k)$ and $\boldsymbol{z}^\sigma=(z_{\sigma(1)},\dots,z_{\sigma(k)})$. The additional relations in $\mathcal{A}$ are
\begin{equation}\label{relat}
\dot\sigma f(\boldsymbol{z})=f(\boldsymbol{z}^\sigma) \dot\sigma,\qquad \ddot\tau f(\boldsymbol{z})=f(\boldsymbol{z}) \ddot \tau.
\end{equation}

For $a=1,\dots,k-1$, let $s_a\in S_k$ be the transposition of $a$ and $a+1$. Consider the elements $\hat{s}_1,\dots,\hat{s}_{k-1}$ of $\mathcal{A}$,
\begin{equation}\label{shat}
\hat{s}_a=\biggl(\frac{z_a-z_{a+1}}{z_a-z_{a+1}-1}\ddot s_a-\frac{1}{z_a-z_{a+1}-1}\biggr)\dot s_a.
\end{equation}
It is straightforward to check that they satisfy the following relations:
\begin{equation*}
\hat{s}_a\hat{s}_{a+1}\hat{s}_a=\hat{s}_{a+1}\hat{s}_a\hat{s}_{a+1},\qquad \hat{s}_a^2=1.
\end{equation*}
Therefore, the assignment $s_a\mapsto \hat{s}_a$ defines an algebra homomorphism $\mathbb{C}S_k\rightarrow \mathcal{A}$. For any~${\sigma\in S_k}$, we denote by $\hat{\sigma}$ the corresponding element of $ \mathcal{A}$. Every element $\hat{\sigma}$ can be written in the following form:
\begin{equation}\label{sighat}
\hat{\sigma}=\sum_{\tau\in S_k} X_{\sigma,\tau}(\boldsymbol{z}) \ddot \tau \dot \sigma,
\end{equation}
where $X_{\sigma,\tau}(\boldsymbol{z})$ are functions of $z_1,\dots,z_k$.

Let $|\sigma|$ denote the length of $\sigma\in S_k$.
\begin{Lemma} \label{longperm} The functions $X_{\sigma,\tau}(\boldsymbol{z})$ have the following properties:
\begin{gather}\label{lm1}
X_{\sigma,\tau}(\boldsymbol{z})=0 \qquad \text{if} \ |\tau|>|\sigma|,
\\ \label{lm2}
X_{\sigma,\tau}(\boldsymbol{z})=\delta_{\sigma,\tau} X_{\sigma,\sigma}(\boldsymbol{z})
\qquad \text{if} \ |\tau|=|\sigma|,
\\ \label{lm3}
X_{\sigma,\sigma}(\boldsymbol{z})=\prod_{a<b,\,\sigma^{-1}(a)>\sigma^{-1}(b)}\frac{z_a-z_b}{z_a-z_b-1}.
\end{gather}
\end{Lemma}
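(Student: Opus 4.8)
The plan is to argue by induction on the length $|\sigma|$, using the homomorphism property $\hat\sigma=\hat s_a\hat\sigma'$ to convert the three claims into a single recursion for the coefficients $X_{\sigma,\tau}(\boldsymbol z)$ appearing in \eqref{sighat}.

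First I would establish that recursion. The base case $\sigma=e$ is immediate, since $\hat e=\ddot e\dot e$ gives $X_{e,\tau}=\delta_{\tau,e}$, which satisfies \eqref{lm1}--\eqref{lm3}. For $\sigma\neq e$, choose a left descent $a$, i.e.\ $|s_a\sigma|<|\sigma|$, and write $\sigma=s_a\sigma'$ with $\sigma'=s_a\sigma$, $|\sigma|=|\sigma'|+1$. Then $\hat\sigma=\hat s_a\hat\sigma'$. Substituting $\hat\sigma'=\sum_\tau X_{\sigma',\tau}(\boldsymbol z)\ddot\tau\dot\sigma'$ and the defining formula \eqref{shat} for $\hat s_a$, I push the factor $\dot s_a$ to the right past the rational functions using the first relation in \eqref{relat}, which replaces $X_{\sigma',\tau}(\boldsymbol z)$ by $X_{\sigma',\tau}(\boldsymbol z^{s_a})$; then I use that $\dot{}$ and $\ddot{}$ commute and that $\dot s_a\dot\sigma'=\dot\sigma$. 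Comparing coefficients of $\ddot\rho\dot\sigma$ yields
\begin{equation*}
X_{\sigma,\rho}(\boldsymbol z)=\frac{z_a-z_{a+1}}{z_a-z_{a+1}-1}\,X_{\sigma',s_a\rho}\bigl(\boldsymbol z^{s_a}\bigr)-\frac{1}{z_a-z_{a+1}-1}\,X_{\sigma',\rho}\bigl(\boldsymbol z^{s_a}\bigr).
\end{equation*}

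Next I would read \eqref{lm1} and \eqref{lm2} directly off this recursion. If $|\rho|>|\sigma|$, then both $|\rho|$ and $|s_a\rho|$ exceed $|\sigma'|$, so both terms vanish by the inductive \eqref{lm1}, proving \eqref{lm1}. If $|\rho|=|\sigma|=|\sigma'|+1$, the second term vanishes by \eqref{lm1}, while in the first term $X_{\sigma',s_a\rho}(\boldsymbol z^{s_a})$ is nonzero only when $|s_a\rho|=|\sigma'|$ and (by the inductive \eqref{lm2}) $s_a\rho=\sigma'$, that is $\rho=\sigma$; this gives \eqref{lm2} together with the one-term recursion $X_{\sigma,\sigma}(\boldsymbol z)=\frac{z_a-z_{a+1}}{z_a-z_{a+1}-1}X_{\sigma',\sigma'}(\boldsymbol z^{s_a})$.

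Finally, to obtain \eqref{lm3}, I would track the set $D(\sigma)=\{(c,d):c<d,\ \sigma^{-1}(c)>\sigma^{-1}(d)\}$ over which the product in \eqref{lm3} runs. The key combinatorial fact is that, because $a$ is a left descent with $|\sigma|=|\sigma'|+1$, one has $(a,a+1)\in D(\sigma)$ but $(a,a+1)\notin D(\sigma')$, and the assignment $(c,d)\mapsto(s_a(c),s_a(d))$ is an order-preserving bijection $D(\sigma')\to D(\sigma)\setminus\{(a,a+1)\}$. Since this relabeling never reverses a pair, the substitution $\boldsymbol z\mapsto\boldsymbol z^{s_a}$ turns $X_{\sigma',\sigma'}(\boldsymbol z)=\prod_{(c,d)\in D(\sigma')}\frac{z_c-z_d}{z_c-z_d-1}$ into $\prod_{(c,d)\in D(\sigma)\setminus\{(a,a+1)\}}\frac{z_c-z_d}{z_c-z_d-1}$, and multiplying by the prefactor $\frac{z_a-z_{a+1}}{z_a-z_{a+1}-1}$ restores exactly the missing $(a,a+1)$ factor, giving \eqref{lm3}. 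The main obstacle is this last step: one must check carefully that the single excluded pair $(a,a+1)$ is precisely the inversion created in passing from $\sigma'$ to $\sigma$, and that no other factor has its sign reversed under $\boldsymbol z\mapsto\boldsymbol z^{s_a}$, so that the substitution cleanly relabels the factors rather than inverting any of them.
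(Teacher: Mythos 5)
Your proposal is correct and follows essentially the same route as the paper: the paper proves \eqref{lm1} and \eqref{lm2} "by inspection" from \eqref{shat} and \eqref{lm3} "by induction on $|\sigma|$", which is precisely what your recursion $X_{\sigma,\rho}(\boldsymbol z)=\frac{z_a-z_{a+1}}{z_a-z_{a+1}-1}X_{\sigma',s_a\rho}(\boldsymbol z^{s_a})-\frac{1}{z_a-z_{a+1}-1}X_{\sigma',\rho}(\boldsymbol z^{s_a})$ carries out in detail. Your treatment of the inversion-set bijection, including the check that the single pair $(a,a+1)$ is the only order-reversed pair under $s_a$ so no factor is inverted by the substitution $\boldsymbol z\mapsto\boldsymbol z^{s_a}$, correctly supplies the step the paper leaves tacit.
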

\begin{proof}
Formulae~\eqref{lm1} and~\eqref{lm2} follow from formula~\eqref{shat} by inspection. Formula~\eqref{lm3} can be shown by induction on $|\sigma|$.
\end{proof}

Denote by $\sigma_0$ the longest element of $S_k$, $\sigma_0(i)=k-i+1$, $i=1,\dots,k$. Let
\begin{equation*}
\Phi(\boldsymbol{z})=\prod_{a<b}\frac{z_a-z_b-1}{z_a-z_b}.
\end{equation*}
Notice that
\begin{equation}\label{phix}
\Phi(\boldsymbol{z})=\frac{1}{X_{\sigma_0,\sigma_0}(\boldsymbol{z})}.
\end{equation}
\begin{Lemma}\label{deltalem}One has
\begin{equation} \label{deltalemf}
\sum_{\lambda\in S_k} X_{\lambda,\rho}(\boldsymbol{z})\Phi\bigl(\boldsymbol{z}^{\lambda\sigma_0}\bigr)X_{\sigma_0\lambda^{-1},\sigma_0\tau^{-1}}\bigl(\boldsymbol{z}^{\lambda\sigma_0}\bigr)=\delta_{\rho,\tau}.
\end{equation}
\end{Lemma}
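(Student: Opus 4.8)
The plan is to repackage the scalar functions $X_{\sigma,\tau}(\boldsymbol z)$ into a single group-algebra element and then read \eqref{deltalemf} as the statement that two bases are dual for the canonical trace form. Write $R=\mathbb{C}(\boldsymbol z)$ and, for $\sigma\in S_k$, set $g_\sigma=\hat\sigma\,\dot\sigma^{-1}=\sum_{\tau}X_{\sigma,\tau}(\boldsymbol z)\,\ddot\tau\in R[\ddot S_k]$ (the factor $\dot\sigma$ in \eqref{sighat} cancels against $\dot\sigma^{-1}$). Since $\sigma\mapsto\hat\sigma$ is a homomorphism and $\dot\sigma f(\boldsymbol z)=f(\boldsymbol z^\sigma)\dot\sigma$ by \eqref{relat}, one gets the cocycle identity $g_{\alpha\beta}(\boldsymbol z)=g_\alpha(\boldsymbol z)\,g_\beta(\boldsymbol z^\alpha)$, and taking $\beta=\alpha^{-1}$ with $g_1=1$ the inverse relation $g_\sigma(\boldsymbol z)^{-1}=g_{\sigma^{-1}}(\boldsymbol z^\sigma)$. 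By \eqref{lm1}--\eqref{lm3} the transition from $\{\ddot\sigma\}$ to $\{g_\sigma\}$ is triangular in the length order with invertible diagonal, so $\{g_\sigma\}$ is an $R$-basis of $R[\ddot S_k]$.

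Next I would rewrite the two factors of \eqref{deltalemf}. Introduce the symmetric nondegenerate trace $\operatorname{tr}(\ddot\tau)=\delta_{\tau,1}$ on $R[\ddot S_k]$, so that the coefficient of $\ddot\tau$ in $x$ equals $\operatorname{tr}(x\,\ddot\tau^{-1})$; in particular $X_{\lambda,\rho}(\boldsymbol z)=\operatorname{tr}\bigl(g_\lambda\,\ddot\rho^{-1}\bigr)$. The inverse relation with $\sigma=\lambda\sigma_0$ gives $g_{\sigma_0\lambda^{-1}}(\boldsymbol z^{\lambda\sigma_0})=g_{\lambda\sigma_0}(\boldsymbol z)^{-1}$, hence $X_{\sigma_0\lambda^{-1},\sigma_0\tau^{-1}}(\boldsymbol z^{\lambda\sigma_0})$ is the coefficient of $\ddot{(\sigma_0\tau^{-1})}$ in $g_{\lambda\sigma_0}(\boldsymbol z)^{-1}$. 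Setting $\tilde g_\lambda=\Phi(\boldsymbol z^{\lambda\sigma_0})\,g_{\lambda\sigma_0}(\boldsymbol z)^{-1}$, the left-hand side of \eqref{deltalemf} becomes the $\bigl(\ddot\rho,\ddot{(\sigma_0\tau^{-1})}\bigr)$-component of $\sum_\lambda g_\lambda\otimes\tilde g_\lambda$. Because the Casimir element $C=\sum_\rho\ddot\rho\otimes\ddot\rho^{-1}$ of the trace form is basis-independent, it suffices to prove that $\{g_\lambda\}$ and $\{\ddot\sigma_0\,\tilde g_\lambda\}$ are dual bases, i.e.\ $\operatorname{tr}\bigl(g_\mu\,\ddot\sigma_0\,\tilde g_\lambda\bigr)=\delta_{\mu,\lambda}$. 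Granting this, $\sum_\lambda g_\lambda\otimes\tilde g_\lambda=(1\otimes\ddot\sigma_0)C=\sum_\rho\ddot\rho\otimes\ddot{(\sigma_0\rho^{-1})}$, whose $\bigl(\ddot\rho,\ddot{(\sigma_0\tau^{-1})}\bigr)$-component is $\delta_{\sigma_0\rho^{-1},\sigma_0\tau^{-1}}=\delta_{\rho,\tau}$, which is exactly \eqref{deltalemf}.

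To establish the duality I would collapse all variable shifts using cyclicity of $\operatorname{tr}$ and the cocycle. Writing $g_{\lambda\sigma_0}^{-1}=g_{\sigma_0}(\boldsymbol z^\lambda)^{-1}g_\lambda^{-1}$ and moving $g_\lambda^{-1}$ around the trace gives $\operatorname{tr}\bigl(g_\lambda^{-1}g_\mu\,\ddot\sigma_0\,g_{\sigma_0}(\boldsymbol z^\lambda)^{-1}\bigr)$; the cocycle yields $g_\lambda^{-1}g_\mu=g_{\lambda^{-1}\mu}(\boldsymbol z^\lambda)$, so with $\nu=\lambda^{-1}\mu$ and $\boldsymbol y=\boldsymbol z^\lambda$ the pairing equals $\Phi(\boldsymbol y^{\sigma_0})\operatorname{tr}\bigl(g_\nu(\boldsymbol y)\,\ddot\sigma_0\,g_{\sigma_0}(\boldsymbol y^{\sigma_0})\bigr)$, using $g_{\sigma_0}(\boldsymbol y)^{-1}=g_{\sigma_0}(\boldsymbol y^{\sigma_0})$. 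By cyclicity this trace is the coefficient of $\ddot\sigma_0$ in $g_{\sigma_0}(\boldsymbol y^{\sigma_0})g_\nu(\boldsymbol y)$, and a second application of the cocycle collapses that product to the single factor $g_{\sigma_0\nu}(\boldsymbol y^{\sigma_0})$. Thus the pairing equals $\Phi(\boldsymbol y^{\sigma_0})\,X_{\sigma_0\nu,\sigma_0}(\boldsymbol y^{\sigma_0})$. Now the standard length identity $|\sigma_0\nu|=|\sigma_0|-|\nu|$ together with \eqref{lm1} forces $X_{\sigma_0\nu,\sigma_0}=0$ unless $|\sigma_0|\le|\sigma_0\nu|$, i.e.\ unless $\nu=1$; and for $\nu=1$ it equals $X_{\sigma_0,\sigma_0}(\boldsymbol y^{\sigma_0})=\Phi(\boldsymbol y^{\sigma_0})^{-1}$ by \eqref{phix}. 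The factor $\Phi$ cancels, so the pairing is $\delta_{\nu,1}=\delta_{\mu,\lambda}$, as needed.

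The only delicate bookkeeping is the interplay of the two commuting copies of $S_k$ with the variable shifts: one must use consistently that $\dot\sigma$ transports functions by $\boldsymbol z\mapsto\boldsymbol z^\sigma$ while $\ddot\tau$ commutes with functions, so that products in $R[\ddot S_k]$ are ordinary convolutions and the cocycle relations hold in the stated form. Granting that, the decisive step—and the one I expect to carry the argument—is the double use of the cocycle that turns the seemingly complicated weighted sum into the single coefficient $X_{\sigma_0\nu,\sigma_0}(\boldsymbol y^{\sigma_0})$; after this reduction the triangularity \eqref{lm1} and the normalization \eqref{phix} finish the proof at once.
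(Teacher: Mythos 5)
Your proof is correct, and it is worth recording how it relates to the paper's argument, since the skeleton is the same but the packaging is genuinely different. Both proofs run on the same two engines: the multiplicativity of $\sigma\mapsto\hat\sigma$ and the triangularity of Lemma~\ref{longperm} combined with~\eqref{phix}. Your cocycle $g_{\alpha\beta}(\boldsymbol{z})=g_\alpha(\boldsymbol{z})g_\beta(\boldsymbol{z}^{\alpha})$ for $g_\sigma=\hat\sigma\dot\sigma^{-1}$ is exactly the paper's formula~\eqref{prodx} written invariantly, and your final evaluation $\Phi(\boldsymbol{y}^{\sigma_0})X_{\sigma_0\nu,\sigma_0}(\boldsymbol{y}^{\sigma_0})=\delta_{\nu,\mathrm{id}}$ is precisely the content of the paper's~\eqref{Xform} (vanishing unless the first index is the unique longest element, plus the normalization~\eqref{phix}). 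The real divergence is in how the ``easy'' orthogonality is converted into~\eqref{deltalemf}: the paper rewrites~\eqref{xeq} as a matrix identity $AB=C$ for $k!\times k!$ matrices and extracts~\eqref{deltalemf} from $BC^{-1}A=\mathrm{id}$, i.e., from the finite-dimensional fact that a one-sided matrix inverse is two-sided; you instead obtain two-sided invertibility of each $g_\sigma$ directly from the cocycle and replace the matrix-inversion swap by the basis-independence of the Casimir tensor of the trace form on $\mathbb{C}(\boldsymbol{z})[\ddot S_k]$, reading~\eqref{deltalemf} as the statement that $\{g_\lambda\}$ and $\{\ddot\sigma_0\tilde g_\lambda\}$ are dual bases. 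What each buys: the paper's route needs no formalism beyond matrix algebra, while yours explains conceptually why a sum over $S_k$ of products of coefficients collapses to a delta, and it makes the shifted-variable bookkeeping automatic by hiding it inside the elements $g_\lambda,\tilde g_\lambda\in\mathbb{C}(\boldsymbol{z})[\ddot S_k]$. One line deserves to be made explicit: as literally stated, taking $\beta=\alpha^{-1}$ in the cocycle gives only $g_\sigma(\boldsymbol{z})g_{\sigma^{-1}}(\boldsymbol{z}^{\sigma})=1$, a right inverse; the left-inverse identity $g_{\sigma^{-1}}(\boldsymbol{z}^{\sigma})g_\sigma(\boldsymbol{z})=g_{\mathrm{id}}(\boldsymbol{z}^{\sigma})=1$ follows from the cocycle read with $\alpha=\sigma^{-1}$, $\beta=\sigma$ at the shifted variables (or from finite-dimensionality), and your trace manipulations do use the inverse on both sides. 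Also note that in ``$(1\otimes\ddot\sigma_0)C$'' the correct factor is $\ddot\sigma_0^{-1}$, which is harmless only because $\sigma_0^{-1}=\sigma_0$. With these two micro-clarifications, the argument is complete and correct.
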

\begin{proof}
Since $\widehat{\sigma\tau} =\hat{\sigma}\hat{\tau}$, by formula~\eqref{sighat} we have
\begin{equation}\label{prodx}
X_{\sigma\tau,\rho}(\boldsymbol{z})=\sum_\pi X_{\sigma,\pi}(\boldsymbol{z})X_{\tau,\pi^{-1}\rho}(\boldsymbol{z}^\sigma).
\end{equation}
Taking here $\rho=\sigma_0$, and using Lemma $\ref{longperm}$ and formula~\eqref{phix}, we get
\begin{equation}\label{Xform}
\sum_\pi X_{\sigma,\pi}(\boldsymbol{z})X_{\tau,\pi^{-1}\sigma_0}(\boldsymbol{z}^\sigma)=\delta_{\sigma\tau,\sigma_0} \frac{1}{\Phi(\boldsymbol{z})}.
\end{equation}
Replacing now $\boldsymbol{z}$ by \smash{$\boldsymbol{z}^{\sigma^{-1}}$} in formula~\eqref{Xform} and taking there $\tau=\mu^{-1}\sigma_0$, we get
\begin{equation}\label{xeq}
\sum_\pi X_{\sigma,\pi}\bigl(\boldsymbol{z}^{\sigma^{-1}}\bigr)X_{\mu^{-1}\sigma_0,\pi^{-1}\sigma_0}(\boldsymbol{z} )=\delta_{\sigma,\mu} \frac{1}{\Phi\bigl(\boldsymbol{z}^{\sigma^{-1}}\bigr)}.
\end{equation}
Formula~\eqref{xeq} can be understood as the matrix equality $AB=C$ for
$k!\times k!$ matrices~$A$,~$B$,~$C$ with entries labeled by permutations:
\begin{equation*}
A_{\sigma,\pi}=X_{\sigma,\pi}\bigl(\boldsymbol{z}^{\sigma^{-1}}\bigr),\qquad B_{\pi,\mu}=X_{\mu^{-1}\sigma_0,\pi^{-1}\sigma_0}(\boldsymbol{z}),\qquad C_{\sigma,\mu}= \delta_{\sigma,\mu} \frac{1}{\Phi\bigl(\boldsymbol{z}^{\sigma^{-1}}\bigr)}.
\end{equation*}
Therefore, the product $BC^{-1}A$ equals the identity matrix, which can be written as follows:
\begin{equation*}
\sum_{\mu} X_{\mu^{-1}\sigma_0,\pi^{-1}\sigma_0}(\boldsymbol{z})\Phi\bigl(\boldsymbol{z}^{\mu^{-1}}\bigr)X_{\mu,\sigma}\bigl(\boldsymbol{z}^{\mu^{-1}}\bigr)=\delta_{\pi,\sigma}.
\end{equation*}
After the substitution
$\lambda=\mu^{-1}\sigma_0$, $\rho=\pi^{-1}\sigma_0$, $\tau=\sigma^{-1}\sigma_0$, we get formula~\eqref{deltalemf}.
\end{proof}

\begin{Lemma}
One has
\begin{equation}\label{Xprop}
X_{\mu,\sigma}(\boldsymbol{z}^{s_a})=\frac{z_{a}-z_{a+1}}{z_{a}-z_{a+1}+1}X_{s_a\mu,s_a\sigma}(\boldsymbol{z} )+\frac{1}{z_{a}-z_{a+1}+1}X_{s_a\mu,\sigma}(\boldsymbol{z} ).
\end{equation}
\end{Lemma}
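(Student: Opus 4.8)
The plan is to exploit that $\sigma\mapsto\hat\sigma$ is an algebra homomorphism together with the relation $\hat s_a^2=1$. These two facts give the operator identity $\hat\mu=\hat s_a\,\widehat{s_a\mu}$ in $\mathcal A$. I would expand the right-hand side in the monomials $\ddot\tau\dot\rho$ and compare with the expansion~\eqref{sighat} of $\hat\mu$; this produces a recursion relating $X_{\mu,\sigma}(\boldsymbol z)$ to the functions $X_{s_a\mu,\cdot}(\boldsymbol z^{s_a})$, and a single substitution $\boldsymbol z\mapsto\boldsymbol z^{s_a}$ then yields~\eqref{Xprop}.

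Concretely, I would first write $\widehat{s_a\mu}=\sum_\nu X_{s_a\mu,\nu}(\boldsymbol z)\,\ddot\nu\,\dot{(s_a\mu)}$ and act on the left by $\hat s_a$ in the form~\eqref{shat}. To bring the result into the standard form $(\text{function})\,\ddot\sigma\,\dot\mu$, I would push the rightmost factor $\dot s_a$ all the way to the right: past each function it acts by $\dot s_a f(\boldsymbol z)=f(\boldsymbol z^{s_a})\dot s_a$, turning every $X_{s_a\mu,\nu}(\boldsymbol z)$ into $X_{s_a\mu,\nu}(\boldsymbol z^{s_a})$; it commutes past every $\ddot\nu$ because the two copies of $S_k$ commute; and finally $\dot s_a\,\dot{(s_a\mu)}=\dot\mu$. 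The remaining factor $\ddot s_a$ is then absorbed through $\ddot s_a\ddot\nu=\ddot{(s_a\nu)}$, while the relation $\ddot\tau f(\boldsymbol z)=f(\boldsymbol z)\ddot\tau$ keeps each coefficient function on the far left.

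After this rearrangement $\hat s_a\widehat{s_a\mu}=\sum_\nu X_{s_a\mu,\nu}(\boldsymbol z^{s_a})\bigl(c_a\ddot{(s_a\nu)}-d_a\ddot\nu\bigr)\dot\mu$, where $c_a=\frac{z_a-z_{a+1}}{z_a-z_{a+1}-1}$ and $d_a=\frac{1}{z_a-z_{a+1}-1}$. Equating this to $\hat\mu=\sum_\sigma X_{\mu,\sigma}(\boldsymbol z)\,\ddot\sigma\,\dot\mu$ and using uniqueness of the coefficients in the expansion over the basis $\{\ddot\sigma\dot\mu\}$, I would read off
\begin{equation*}
X_{\mu,\sigma}(\boldsymbol z)=\frac{z_a-z_{a+1}}{z_a-z_{a+1}-1}\,X_{s_a\mu,s_a\sigma}(\boldsymbol z^{s_a})-\frac{1}{z_a-z_{a+1}-1}\,X_{s_a\mu,\sigma}(\boldsymbol z^{s_a}).
\end{equation*}
The only index bookkeeping is that the term $c_a\ddot{(s_a\nu)}$ contributes to $\ddot\sigma$ via $\nu=s_a\sigma$, producing $X_{s_a\mu,s_a\sigma}(\boldsymbol z^{s_a})$, whereas the term $-d_a\ddot\nu$ contributes via $\nu=\sigma$.

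Finally I would substitute $\boldsymbol z\mapsto\boldsymbol z^{s_a}$ everywhere. Since $s_a^2=1$ we have $(\boldsymbol z^{s_a})^{s_a}=\boldsymbol z$, and $z_a-z_{a+1}$ changes sign under $s_a$, so $c_a(\boldsymbol z^{s_a})=\frac{z_a-z_{a+1}}{z_a-z_{a+1}+1}$ and $-d_a(\boldsymbol z^{s_a})=\frac{1}{z_a-z_{a+1}+1}$; this converts the displayed recursion into exactly~\eqref{Xprop}. I do not expect a genuine obstacle here, since everything is a direct manipulation in $\mathcal A$; the main points to watch are the index shift $\nu=s_a\sigma$ coming from the $\ddot s_a$ factor and, throughout, keeping straight which copy of $S_k$ each coefficient function commutes with.
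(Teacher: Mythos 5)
Your proposal is correct and follows essentially the same route as the paper: your in-algebra computation of $\hat s_a\,\widehat{s_a\mu}$ is precisely the special case $\sigma=s_a$ of the paper's product formula~\eqref{prodx} (with the explicit coefficients $X_{s_a,s_a}$ and $X_{s_a,\id}$ read off from~\eqref{shat}), and your final substitution $\boldsymbol z\mapsto\boldsymbol z^{s_a}$ together with the index shift matches the paper's replacement of $\boldsymbol z$ by $\boldsymbol z^{s_a}$ and $\pi=s_a\mu$. The only cosmetic difference is that you re-derive this instance of~\eqref{prodx} directly from the relations in $\mathcal A$ rather than citing it, which makes the argument slightly more self-contained but not different in substance.
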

\begin{proof}By formulae~\eqref{shat} and~\eqref{sighat}, we have
\begin{equation*}
X_{s_a,s_a}(\boldsymbol{z})=\frac{z_a-z_{a+1}}{z_a-z_{a+1}-1}, \qquad X_{s_a,\id}(\boldsymbol{z})=\frac{-1}{z_a-z_{a+1}-1},
\end{equation*}
and $X_{s_a,\tau}(\boldsymbol{z})=0$, otherwise. Therefore, by formula~\eqref{prodx} we obtain
\begin{equation*}
X_{s_a\pi,\sigma}(\boldsymbol{z})=\frac{z_{a}-z_{a+1}}{z_{a}-z_{a+1}-1}X_{\pi,s_a\sigma}(\boldsymbol{z}^{s_a})-\frac{1}{z_a-z_{a+1}-1}X_{\pi,\sigma}(\boldsymbol{z}^{s_a}).
\end{equation*}
Replacing here $\boldsymbol{z}$ by $\boldsymbol{z}^{s_a}$ and making the substitution $\pi=s_a\mu$, we get formula~\eqref{Xprop}.
\end{proof}

\begin{Lemma} One has
\begin{equation}\label{Xprod2}
X_{\mu,\sigma }\bigl(\boldsymbol{z}^{s_a\mu^{-1}}\bigr)= \frac{z_{a}-z_{a+1}}{z_{a}-z_{a+1}-1} X_{\mu s_a,\sigma s_a} \bigl(\boldsymbol{z}^{s_a\mu^{-1}}\bigr)-\frac{1}{z_{a}-z_{a+1}-1}X_{\mu s_a,\sigma }\bigl(\boldsymbol{z}^{s_a\mu^{-1}}\bigr).
\end{equation}
\end{Lemma}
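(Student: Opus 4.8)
The plan is to mirror the proof of formula~\eqref{Xprop}, but now with $s_a$ appearing as the \emph{right} factor in the product formula~\eqref{prodx} rather than the left one. Recall from~\eqref{shat} and~\eqref{sighat} the explicit values $X_{s_a,s_a}(\boldsymbol{z})=\frac{z_a-z_{a+1}}{z_a-z_{a+1}-1}$ and $X_{s_a,\id}(\boldsymbol{z})=-\frac{1}{z_a-z_{a+1}-1}$, with $X_{s_a,\tau}(\boldsymbol{z})=0$ for all other $\tau$. These two nonzero values are exactly what make the sum in~\eqref{prodx} collapse to two terms, and they are the only structural input beyond~\eqref{prodx} that the argument requires.

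First I would specialize~\eqref{prodx} by taking the right factor equal to $s_a$. Setting $\sigma=\mu$, $\tau=s_a$, and relabelling the free index $\rho$ as $\sigma$, the only surviving summands are $\pi=\sigma s_a$ and $\pi=\sigma$, with the coefficients $X_{s_a,\cdot}$ evaluated at the twisted point $\boldsymbol{z}^\mu$. This yields
\[
X_{\mu s_a,\sigma}(\boldsymbol{z})=\frac{z_{\mu(a)}-z_{\mu(a+1)}}{z_{\mu(a)}-z_{\mu(a+1)}-1}\,X_{\mu,\sigma s_a}(\boldsymbol{z})-\frac{1}{z_{\mu(a)}-z_{\mu(a+1)}-1}\,X_{\mu,\sigma}(\boldsymbol{z}),
\]
the right-multiplication analogue of the intermediate identity used to prove~\eqref{Xprop}. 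Note that the twist $\boldsymbol{z}^\mu$ now sits inside the \emph{coefficients}, which is the source of all the later bookkeeping.

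The second step is to interchange the roles of $\mu$ and $\mu s_a$, i.e.\ to replace $\mu$ by $\mu s_a$ throughout. Then the left-hand side becomes $X_{\mu,\sigma}(\boldsymbol{z})$, and using $(\mu s_a)(a)=\mu(a+1)$ and $(\mu s_a)(a+1)=\mu(a)$ the coefficient argument $w:=z_{\mu(a)}-z_{\mu(a+1)}$ is sent to $-w$; a short simplification of the two resulting fractions gives
\[
X_{\mu,\sigma}(\boldsymbol{z})=\frac{w}{w+1}\,X_{\mu s_a,\sigma s_a}(\boldsymbol{z})+\frac{1}{w+1}\,X_{\mu s_a,\sigma}(\boldsymbol{z}),\qquad w=z_{\mu(a)}-z_{\mu(a+1)}.
\]
Finally I would substitute $\boldsymbol{z}\mapsto\boldsymbol{z}^{s_a\mu^{-1}}$ in this identity. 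Since $z_{\mu(a)}\mapsto z_{(s_a\mu^{-1})(\mu(a))}=z_{a+1}$ and likewise $z_{\mu(a+1)}\mapsto z_a$, the coefficient $w$ becomes $-(z_a-z_{a+1})$, so $\frac{w}{w+1}$ turns into $\frac{z_a-z_{a+1}}{z_a-z_{a+1}-1}$ and $\frac{1}{w+1}$ into $-\frac{1}{z_a-z_{a+1}-1}$. This is precisely~\eqref{Xprod2}.

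The only genuinely delicate point is the argument bookkeeping in this last substitution: one must check that evaluating $w=z_{\mu(a)}-z_{\mu(a+1)}$ at $\boldsymbol{z}^{s_a\mu^{-1}}$ produces exactly $z_{a+1}-z_a$ (this is where the particular twist $s_a\mu^{-1}$ is engineered to cancel the unwanted $\mu$), and that the two signs then come out as stated. It is also worth doing the swap of $\mu$ and $\mu s_a$ \emph{before} the substitution, so that the evaluation point is applied to an identity valid for all $\boldsymbol{z}$ and no circularity in the dependence on $\mu$ arises. Everything else is a routine collapse of the sum in~\eqref{prodx} together with elementary manipulation of rational functions.
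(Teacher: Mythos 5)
Your proof is correct and takes essentially the same route as the paper: both specialize \eqref{prodx} with right factor $s_a$ so that the sum collapses to the two terms supported by $X_{s_a,\id}$ and $X_{s_a,s_a}$, then trade $\mu$ for $\mu s_a$ and evaluate at the twisted point $\boldsymbol{z}^{s_a\mu^{-1}}$. The only cosmetic difference is the order of the last two substitutions — the paper replaces $\boldsymbol{z}$ by $\boldsymbol{z}^{\mu^{-1}}$ first, which turns the coefficients into $X_{s_a,\id}(\boldsymbol{z})$ and $X_{s_a,s_a}(\boldsymbol{z})$ and thereby avoids your explicit $w\mapsto -w$ sign bookkeeping — but this does not change the argument.
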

\begin{proof}
By formula~\eqref{prodx}, we have
\begin{equation*}
X_{\mu s_a,\sigma }(\boldsymbol{z})=\sum_{\pi} X_{\mu,\pi} (\boldsymbol{z})X_{s_a,\pi^{-1}\sigma } (\boldsymbol{z}^{\mu}).
\end{equation*}
Thus
\begin{equation*}
X_{\mu s_a,\sigma }(\boldsymbol{z})=\sum_{\pi} X_{\mu,\pi} (\boldsymbol{z})X_{s_a,\pi^{-1}\sigma } (\boldsymbol{z}^{\mu})=X_{\mu,\sigma } (\boldsymbol{z})X_{s_a, {\rm id}} (\boldsymbol{z}^{\mu})+X_{\mu,\sigma s_a} (\boldsymbol{z})X_{s_a, s_a} (\boldsymbol{z}^{\mu}).
\end{equation*}
Replacing here $\boldsymbol{z}$ by $\boldsymbol{z}^{\mu^{-1}}$, we get
\begin{equation*}
X_{\mu s_a,\sigma }\bigl(\boldsymbol{z}^{\mu^{-1}}\bigr)=X_{\mu,\sigma } \bigl(\boldsymbol{z}^{\mu^{-1}}\bigr)X_{s_a,{\rm id}} (\boldsymbol{z} )+X_{\mu,\sigma s_a} \bigl(\boldsymbol{z}^{\mu^{-1}}\bigr)X_{s_a, s_a} (\boldsymbol{z} ).
\end{equation*}
Substituting now $\mu$ with $\mu s_a$, we obtain~\eqref{Xprod2}.
\end{proof}

For $\sigma\in S_k$ and a subset $I=\{i_1,\dots,i_m\}\subset\{1,\dots,k\}$, denote
\begin{equation*}\sigma (I)=\{\sigma(i_1),\dots,\sigma(i_m)\}.\end{equation*}
Recall the functions $V_I(\boldsymbol{t},\boldsymbol{z},y)$, $\widetilde{V}_J(\boldsymbol{t},\boldsymbol{z},y)$, see formulae~\eqref{F},~\eqref{V} and~\eqref{Ft},~\eqref{Vt}.

\begin{Lemma}
For each $a=1,\dots, k-1$, we have
\begin{gather}\label{sym2}
V_{I}(\boldsymbol{t},\boldsymbol{z}^{s_a}\!,y)=\frac{z_{a+1}-z_a}{z_{a+1}-z_{a}-1}V_{s_a(I)}(\boldsymbol{t},\boldsymbol{z},y)-\frac{1}{z_{a+1}-z_{a}-1}V_{I}(\boldsymbol{t},\boldsymbol{z},y),
\\ \label{sym1}
\widetilde{V}_I(\boldsymbol{t},\boldsymbol{z}^{s_a}\!,y)=\frac{z_a-z_{a+1}}{z_a-z_{a+1}-1}\widetilde{V}_{s_a(I)}(\boldsymbol{t},\boldsymbol{z},y)-\frac{1}{z_a-z_{a+1}-1}\widetilde{V}_I(\boldsymbol{t},\boldsymbol{z},y).
\end{gather}
\end{Lemma}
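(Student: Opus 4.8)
The plan is to prove~\eqref{sym2} in full and then to indicate how~\eqref{sym1} follows by the same argument. Since the prefactor $1/(\xi-|I|)!$ and the polynomial $\prod_{c=1}^{|I|}(t_c-y)$ in~\eqref{V} do not depend on $\boldsymbol z$, and since $|s_a(I)|=|I|$, it suffices to prove the same three-term relation with $V_I$ replaced by $\Symb_{\boldsymbol t}\bigl(F_I(\boldsymbol t,\cdot)\prod_{c=1}^{|I|}(t_c-y)\bigr)$. The first step is to record how the single-index building block
\[
f(t;i)=\frac{1}{t-z_i}\prod_{m=i+1}^{k}\frac{t-z_m+1}{t-z_m},
\]
out of which $F_I=\prod_{b}f(t_b;i_b)$ is assembled (see~\eqref{F}), behaves under the exchange $z_a\leftrightarrow z_{a+1}$. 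Inspection gives three cases: for $i\notin\{a,a+1\}$ the block is unchanged (if $i<a$ the two factors for $z_a$ and $z_{a+1}$ appear symmetrically, and if $i>a+1$ neither variable occurs); for $i=a$ one gets $f(t;a)|_{s_a}=\frac{t-z_a+1}{t-z_a}\,f(t;a+1)$; and for $i=a+1$ one gets $f(t;a+1)|_{s_a}=\frac{t-z_{a+1}}{t-z_{a+1}+1}\,f(t;a)$.

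Next I would treat the case where exactly one of $a,a+1$ lies in $I$, so that $s_a(I)\neq I$. Writing $b$ for the position of that index in $I$, the three block rules and the invariance of all other factors show that both $F_I(\boldsymbol t,\boldsymbol z^{s_a})$ and $F_I(\boldsymbol t,\boldsymbol z)$ are explicit scalar multiples of $F_{s_a(I)}(\boldsymbol t,\boldsymbol z)$, the scalars being rational functions of $t_b,z_a,z_{a+1}$ only. Relation~\eqref{sym2} then reduces, when $a\in I$ and $a+1\notin I$, to the identity
\[
\frac{t_b-z_a+1}{t_b-z_a}=\frac{z_{a+1}-z_a}{z_{a+1}-z_a-1}-\frac{1}{z_{a+1}-z_a-1}\cdot\frac{t_b-z_{a+1}+1}{t_b-z_a},
\]
and to its companion when $a+1\in I$ and $a\notin I$; both are checked by clearing denominators. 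In this case no symmetrization is required, so applying $\frac{1}{(\xi-|I|)!}\Symb_{\boldsymbol t}\bigl((\,\cdot\,)\prod_{c}(t_c-y)\bigr)$ to the $F$-level identity gives~\eqref{sym2}.

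The remaining case $s_a(I)=I$ (both or neither of $a,a+1$ in $I$) is the heart of the matter, since there the right-hand side of~\eqref{sym2} collapses to $V_I(\boldsymbol t,\boldsymbol z,y)$ and the claim is that $V_I$ is symmetric in $z_a,z_{a+1}$. If neither index lies in $I$ this is immediate, as $F_I$ is then invariant under $z_a\leftrightarrow z_{a+1}$. If both lie in $I$, say $i_b=a$ and $i_{b+1}=a+1$, the block rules give $F_I(\boldsymbol t,\boldsymbol z^{s_a})=c\,F_I(\boldsymbol t,\boldsymbol z)$ with
\[
c-1=\frac{(z_a-z_{a+1})(t_b-t_{b+1}+1)}{(t_b-z_{a+1}+1)(t_{b+1}-z_a)},
\]
so one must show $\Sym_{\boldsymbol t}\bigl((c-1)F_I\prod_{c}(t_c-y)\,Q_\xi\bigr)=0$, using~\eqref{barsym}. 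This is where symmetrization is indispensable: the factor $(t_b-t_{b+1}+1)$ coming from $c-1$ meets the factor $\frac{t_b-t_{b+1}-1}{t_b-t_{b+1}}$ from $Q_\xi$, and a short computation shows that the product of all factors involving $t_b$ or $t_{b+1}$ is antisymmetric under the transposition exchanging $t_b$ and $t_{b+1}$, every other factor being symmetric under it. Hence the whole summand is antisymmetric under that transposition, and pairing the permutations in $\Sym_{\boldsymbol t}$ that differ by it makes the sum vanish. I expect this antisymmetry step to be the main obstacle.

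Finally, \eqref{sym1} is proved by the same scheme, working with the building block $\frac{1}{z_i-t}\prod_{m=i+1}^{k}\frac{z_m-t+1}{z_m-t}$ of $\widetilde F_I$ (see~\eqref{Ft}) in place of $f(t;i)$; the interchange of the roles of $t$ and $z$ in the numerators is precisely what turns the coefficients of~\eqref{sym2} into $\frac{z_a-z_{a+1}}{z_a-z_{a+1}-1}$ and $\frac{-1}{z_a-z_{a+1}-1}$ in~\eqref{sym1}.
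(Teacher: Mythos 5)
Your proof is correct, and its case decomposition coincides with the paper's, but the execution is genuinely different in two respects, so a comparison is in order. The paper's proof is three lines long: it asserts that by the structure of formulae \eqref{F} and \eqref{V} it suffices to take $k=2$, and then lists three rational identities, which are exactly your three cases --- the coefficient identity $1=\frac{z'-z}{z'-z-1}-\frac{1}{z'-z-1}$ (your case $s_a(I)=I$, where the right-hand side of \eqref{sym2} collapses), an identity that after clearing an overall factor $\frac{1}{t-z'}$ is literally your displayed one-block identity (the case $|I\cap\{a,a+1\}|=1$), and a cubic polynomial identity encoding the case $a,a+1\in I$. You differ in that (i) you never reduce to $k=2$: you spell out the localization (blocks with $i<a$ involve $z_a,z_{a+1}$ only through a swap-symmetric pair of trailing factors, blocks with $i>a+1$ involve neither) which the paper leaves tacit behind ``by the structure of the formulae''; and (ii) in the hardest case you replace the paper's polynomial identity by the observation that, after cancelling $t_b-z_{a+1}+1$, the summand carries the odd factor $\frac{(t_b-t_{b+1}+1)(t_b-t_{b+1}-1)}{t_b-t_{b+1}}$ against an otherwise $t_b\leftrightarrow t_{b+1}$-symmetric product, so pairing $\sigma$ with $\sigma s_{b,b+1}$ inside $\Sym_{\boldsymbol{t}}$ annihilates the sum. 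I checked your formula for $c-1$ and the antisymmetry claim; both are right, and at $k=\xi=2$ clearing denominators in your antisymmetric combination reproduces the paper's third identity verbatim, so the two arguments are equivalent in content: yours buys a self-contained argument valid directly for all $k$ and $\xi$, making transparent why symmetrization is indispensable exactly when $s_a(I)=I$ with both indices present, while the paper's buys brevity at the cost of asking the reader to reconstruct the reduction. Your closing remark deducing \eqref{sym1} by the same scheme likewise matches the paper's one-sentence treatment of that half.
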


\begin{proof}
By the structure of formulae~\eqref{F} and~\eqref{V} for the function $V_I(\boldsymbol{t},\boldsymbol{z},y)$, it is enough to prove formula~\eqref{sym2} for $k=2$. In this case, the statement follows from the identities
\begin{gather*}
1=\frac{z'-z}{z'-z-1}-\frac{1}{z'-z-1},
\\
\frac{1}{t-z'}\cdot\frac{t-z+1}{t-z}=\frac{z'-z}{z'-z-1}\cdot\frac{1}{t-z'}-\frac{1}{z'-z-1}\cdot\frac{1}{t-z}\cdot\frac{t-z'+1}{t-z'},
\\
\bigl(t'-z\bigr)\bigl(t-z'+1\bigr)\bigl(t-t'-1\bigr)-(t-z)\bigl(t'-z'+1\bigr)\bigl(t'-t-1\bigr)\\
\qquad{}=\bigl(t'-z'\bigr)(t-z+1)\bigl(t-t'-1\bigr)-\bigl(t-z'\bigr)\bigl(t'-z+1\bigr)\bigl(t'-t-1\bigr).
\end{gather*}

The proof of~\eqref{sym1} is similar by using formulae~\eqref{Ft} and~\eqref{Vt} for functions $\widetilde{V}_J(\boldsymbol{t},\boldsymbol{z},y)$.
\end{proof}

The statement of Proposition $\ref{symprop}$ is given by formula~\eqref{sympropf}. It can be written as follows:
\begin{align}
&\sum\limits_{(I,J)\in \mathcal{S}_{p,q,r,k}}V_{I}\bigl(\boldsymbol{t}^3, \boldsymbol{z},x-\Lambda^3\bigr) \widetilde{V}_{J}\bigl( \boldsymbol{t}^1, \boldsymbol{z},x-\Lambda^2\bigr) \nonumber\\
&\qquad{}=\frac{1}{C_{p,q,r,k}}\sum_{\sigma\in S_k} V_{\sigma_0({I}_0)}\bigl(\boldsymbol{t}^3, \boldsymbol{z}^{\sigma},x-\Lambda^3\bigr) \widetilde{V}_{J_0}\bigl( \boldsymbol{t}^1, \boldsymbol{z}^{\sigma\sigma_0},x-\Lambda^2\bigr) \Phi(\boldsymbol{z}^\sigma),\label{new54}
\end{align}
where $\mathcal S_{p,q,r,k}$ on the left-hand side is the set of all pairs of subsets $I$, $J$ of $\{1,\dots,k \}$, such that $|I|=p$, $|J|=q$, $|I\cap J|=r$, and we use $k=\xi_2$, $\boldsymbol{z}=\boldsymbol{t}^2$. On the right-hand side,
\[
C_{p,q,r,k}= (p-r)!(q-r)! r!(k-p-q+r)!
\]
 and $(I_0,J_0)$  is any fixed pair from $\mathcal S_{p,q,r,k}$. We also expanded $\Symb_{\boldsymbol{t}^2}$ according to formulae~\eqref{Qfun} and~\eqref{barsym}, and observed that
 \[
 \check{I}_0=\sigma_0(I_0) , \qquad \check{\boldsymbol{t}}^2=\boldsymbol{z}^{\sigma_0}.
 \]

In the rest of the proof, we will suppress the arguments $\boldsymbol{t}^1$, $\boldsymbol{t}^3$, $x-\Lambda^2$, $x-\Lambda^3$ because they are the same on both sides of formula~\eqref{new54} and will never be changed in the reasoning.

Notice that every pair $(I,J)\in \mathcal{S}_{p,q,r,k}$ can be obtained from an arbitrary fixed pair $(I_0,J_0) \in \mathcal{S}_{p,q,r,k}$ by the action of the symmetric group $S_k$. Therefore, the left-hand side of the formula~\eqref{new54} can be written in the following way:
\begin{equation}\label{twosums}
\sum_{(I,J)\in \mathcal{S}_{p,q,r,k}}\widetilde{V}_J( \boldsymbol{z}) V_I(\boldsymbol{z})=\frac{1}{C_{p,q,r,k}}\sum_{\sigma\in S_k} \widetilde{V}_{\sigma(J_0)}(\boldsymbol{z}) V_{\sigma(I_0)}(\boldsymbol{z} ).
\end{equation}

Using Lemma $\ref{deltalem}$, we get{\samepage
\begin{gather}
\sum_{\sigma\in S_k} \widetilde{V}_{\sigma(J_0)}(\boldsymbol{z}) V_{\sigma(I_0)}(\boldsymbol{z} ) \nonumber\\
\qquad{}=\sum_{\sigma,\pi,\tau\in S_k} \widetilde{V}_{\sigma(J_0)}(\boldsymbol{z})X_{\pi,\sigma}(\boldsymbol{z}) \Phi(\boldsymbol{z}^{\pi\sigma_0})X_{\sigma_0\pi^{-1},\sigma_0\tau^{-1}}(\boldsymbol{z}^{\pi\sigma_0})V_{\tau(I_0)}(\boldsymbol{z}),\label{longeq}
\end{gather}}%
since from formula~\eqref{deltalemf}
\begin{equation*}
\sum_{\pi\in S_k} X_{\pi,\sigma}(\boldsymbol{z}) \Phi(\boldsymbol{z}^{\pi\sigma_0})X_{\sigma_0\pi^{-1},\sigma_0\tau^{-1}}(\boldsymbol{z}^{\pi\sigma_0}) =\delta_{\sigma,\tau}.
\end{equation*}
\begin{Lemma}\label{sum1} We have
\begin{equation}\label{sumvx}
\sum_{\sigma\in S_k} \widetilde{V}_{\sigma(J_0)}(\boldsymbol{z})X_{\pi,\sigma}(\boldsymbol{z})=\widetilde{V}_{J_0}(\boldsymbol{z}^\pi).
\end{equation}
\end{Lemma}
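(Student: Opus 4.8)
The plan is to prove \eqref{sumvx} by induction on the length $|\pi|$, writing $\Theta_\pi(\boldsymbol z)$ for its left-hand side, $\Theta_\pi(\boldsymbol z)=\sum_{\sigma\in S_k}\widetilde V_{\sigma(J_0)}(\boldsymbol z)X_{\pi,\sigma}(\boldsymbol z)$. For the base case $\pi=\id$, formula \eqref{sighat} shows $X_{\id,\sigma}(\boldsymbol z)=\delta_{\id,\sigma}$, so $\Theta_{\id}(\boldsymbol z)=\widetilde V_{J_0}(\boldsymbol z)=\widetilde V_{J_0}(\boldsymbol z^{\id})$, as required.

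For the inductive step I would factor $\pi=s_a\pi'$ with $|\pi'|=|\pi|-1$ and prove the recursion $\Theta_{s_a\pi'}(\boldsymbol z)=\Theta_{\pi'}(\boldsymbol z^{s_a})$. The identity needed for the coefficients is obtained by specializing \eqref{prodx} to $\sigma=s_a$, $\tau=\pi'$; since $X_{s_a,s_a}(\boldsymbol z)=\tfrac{z_a-z_{a+1}}{z_a-z_{a+1}-1}$, $X_{s_a,\id}(\boldsymbol z)=-\tfrac{1}{z_a-z_{a+1}-1}$ and $X_{s_a,\tau}(\boldsymbol z)=0$ otherwise, this gives
\[
X_{s_a\pi',\sigma}(\boldsymbol z)=\frac{z_a-z_{a+1}}{z_a-z_{a+1}-1}\,X_{\pi',s_a\sigma}\bigl(\boldsymbol z^{s_a}\bigr)-\frac{1}{z_a-z_{a+1}-1}\,X_{\pi',\sigma}\bigl(\boldsymbol z^{s_a}\bigr).
\]

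Next I would substitute this into $\Theta_{s_a\pi'}(\boldsymbol z)$ and, in the first of the two resulting sums, reindex $\sigma\mapsto s_a\sigma$, using $(s_a\sigma)(J_0)=s_a(\sigma(J_0))$ and $s_a^2=\id$. This puts both sums over a common index $\sigma$ sharing the factor $X_{\pi',\sigma}(\boldsymbol z^{s_a})$, with $\widetilde V$-prefactor
\[
\frac{z_a-z_{a+1}}{z_a-z_{a+1}-1}\,\widetilde V_{s_a(\sigma(J_0))}(\boldsymbol z)-\frac{1}{z_a-z_{a+1}-1}\,\widetilde V_{\sigma(J_0)}(\boldsymbol z).
\]
By \eqref{sym1}, applied with $I=\sigma(J_0)$ and the arguments $\boldsymbol t^1$, $x-\Lambda^2$ suppressed as throughout this section, this prefactor is exactly $\widetilde V_{\sigma(J_0)}(\boldsymbol z^{s_a})$. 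Hence $\Theta_{s_a\pi'}(\boldsymbol z)=\sum_\sigma \widetilde V_{\sigma(J_0)}(\boldsymbol z^{s_a})X_{\pi',\sigma}(\boldsymbol z^{s_a})=\Theta_{\pi'}(\boldsymbol z^{s_a})$. Combining the recursion with the induction hypothesis $\Theta_{\pi'}(\boldsymbol z)=\widetilde V_{J_0}(\boldsymbol z^{\pi'})$ and the identity $\boldsymbol z^{s_a\pi'}=(\boldsymbol z^{s_a})^{\pi'}$ yields $\Theta_{s_a\pi'}(\boldsymbol z)=\widetilde V_{J_0}(\boldsymbol z^{s_a\pi'})=\widetilde V_{J_0}(\boldsymbol z^{\pi})$, closing the induction.

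The conceptual content is entirely in two matching facts: the coefficient recursion for $X_{s_a\pi',\sigma}$ produced by \eqref{prodx}, and the transformation rule \eqref{sym1} for $\widetilde V$. The step I expect to require the most care is ensuring that these align verbatim — that the prefactors $\tfrac{z_a-z_{a+1}}{z_a-z_{a+1}-1}$ and $-\tfrac{1}{z_a-z_{a+1}-1}$ coming from the coefficient recursion are precisely the coefficients appearing in \eqref{sym1}, and that the bookkeeping of the subset action under $\sigma\mapsto s_a\sigma$ turns $\sigma(J_0)$ into $s_a(\sigma(J_0))$ while leaving the suppressed arguments untouched. Beyond that, the argument is a direct substitution and reindexing.
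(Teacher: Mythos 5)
Your proof is correct and takes essentially the same route as the paper's: induction on $|\pi|$, peeling off a simple reflection $s_a$, reindexing $\sigma\mapsto s_a\sigma$, and matching the resulting coefficients against the exchange relation \eqref{sym1} together with the $X$-coefficient recursion that follows from \eqref{prodx} (your two-term identity for $X_{s_a\pi',\sigma}(\boldsymbol{z})$ is precisely the displayed intermediate formula in the paper's proof of the lemma containing \eqref{Xprop}). The only difference is organizational: you expand the left-hand side forward into two sums and collapse them with \eqref{sym1}, whereas the paper substitutes $\boldsymbol{z}\mapsto\boldsymbol{z}^{s_a}$ into the induction hypothesis and expands both factors via \eqref{Xprop} and \eqref{sym1} into four sums whose second and third terms cancel --- the same computation run in the opposite direction, with your bookkeeping arguably slightly leaner.
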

\begin{proof}
We will use induction on the length of the permutation $\pi$. For $\pi=\id$, formula~\eqref{sumvx} is clear, and for $\pi=s_a$ with some $a=1,\dots,k-1$, formula~\eqref{sumvx} coincides with formula~\eqref{sym1}. For the induction step, we find $a$ such that $|s_a\pi|=|\pi|-1$, and denote $\rho =s_a\pi$. Then by the induction assumption
\begin{equation*}
\sum_{\sigma} \widetilde{V}_{\sigma(J_0)}(\boldsymbol{z})X_{\rho,\sigma}(\boldsymbol{z})= \widetilde{V}_{J_0}(\boldsymbol{z}^\rho).
\end{equation*}
Replacing here $\boldsymbol{z}$ by $\boldsymbol{z}^{s_a}$, we get
\begin{equation}\label{indstep}
\sum_{\sigma} \widetilde{V}_{\sigma(J_0)}(\boldsymbol{z}^{s_a})X_{\rho,\sigma}(\boldsymbol{z}^{s_a})= \widetilde{V}_{J_0}(\boldsymbol{z}^{s_a\rho})=\widetilde{V}_{J_0}(\boldsymbol{z}^{\pi}).
\end{equation}
Using formulae~\eqref{Xprop} and~\eqref{sym1}, the left-hand side of~\eqref{indstep} becomes
\begin{gather*}
\sum_{\sigma} \frac{(z_a-z_{a+1})^2}{(z_a-z_{a+1})^2-1}\widetilde{V}_{s_a\sigma(J_0)}(\boldsymbol{z})X_{s_a\rho,s_a\sigma}(\boldsymbol{z})\\
\qquad{} +\sum_{\sigma}\frac{z_a-z_{a+1}}{(z_a-z_{a+1})^2-1}\widetilde{V}_{s_a\sigma(J_0)}(\boldsymbol{z})X_{s_a\rho,\sigma}(\boldsymbol{z})\\
\qquad{}- \sum_{\sigma}\frac{z_a-z_{a+1}}{(z_a-z_{a+1})^2-1}\widetilde{V}_{\sigma(J_0)}(\boldsymbol{z})X_{s_a\rho,s_a\sigma}(\boldsymbol{z})\\
\qquad{} -\sum_{\sigma} \frac{1}{(z_a-z_{a+1})^2-1}\widetilde{V}_{\sigma(J_0)}(\boldsymbol{z})X_{s_a\rho, \sigma}(\boldsymbol{z}).
\end{gather*}
Changing the summation index in the first and second sums from $\sigma$ to $s_a\sigma$, we observe that the second and third sums cancel each other, while the first and fourth sums combine together and simplify to the expression
\begin{equation*}
\sum_{\sigma} \widetilde{V}_{\sigma(J_0)}(\boldsymbol{z})X_{s_a\rho,\sigma}(\boldsymbol{z}) =\sum_{\sigma} \widetilde{V}_{\sigma(J_0)}(\boldsymbol{z})X_{\pi,\sigma}(\boldsymbol{z}),
\end{equation*}
which appears on the left-hand side of formula~\eqref{sumvx}.
\end{proof}

\begin{Lemma}\label{sum2} We have
\begin{equation}\label{sum2f}
\sum_{\tau}V_{\tau(I_0)}(\boldsymbol{z})X_{\sigma_0\pi^{-1},\sigma_0\tau^{-1}}(\boldsymbol{z}^{\pi\sigma_0})=V_{\sigma_0(I_0)}(\boldsymbol{z}^{\pi\sigma_0}).
\end{equation}
\end{Lemma}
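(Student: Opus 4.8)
The plan is to mirror the proof of Lemma~\ref{sum1} almost verbatim, replacing the tools adapted to $\widetilde V$ by those adapted to $V$: the recursion~\eqref{Xprod2} in place of~\eqref{Xprop}, and the symmetry~\eqref{sym2} in place of~\eqref{sym1}. First I would strip away the $\sigma_0$-decorations by substituting $\tau=\mu^{-1}\sigma_0$ and abbreviating $\nu=\sigma_0\pi^{-1}$ and $K_0=\sigma_0(I_0)$. Since $\sigma_0^2=\id$, we have $\pi\sigma_0=\nu^{-1}$ and $\tau(I_0)=\mu^{-1}(K_0)$, so~\eqref{sum2f} is equivalent to
\[
\sum_{\mu\in S_k} V_{\mu^{-1}(K_0)}(\boldsymbol{z})\,X_{\nu,\mu}\bigl(\boldsymbol{z}^{\nu^{-1}}\bigr)=V_{K_0}\bigl(\boldsymbol{z}^{\nu^{-1}}\bigr).
\]
In this form the argument $\boldsymbol{z}^{\nu^{-1}}$ of $X$ carries the same permutation $\nu$ as its first index, which is precisely the shape appearing in~\eqref{Xprod2}; this explains why~\eqref{Xprod2}, rather than~\eqref{Xprop}, is the correct tool here. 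I would prove this identity by induction on $|\nu|=|\sigma_0\pi^{-1}|$, equivalently by downward induction on $|\pi|$ with base point $\pi=\sigma_0$.

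For the base case $\nu=\id$, formula~\eqref{sighat} gives $X_{\id,\mu}=\delta_{\id,\mu}$, so the sum collapses to its $\mu=\id$ term and reduces to the tautology $V_{K_0}(\boldsymbol{z})=V_{K_0}(\boldsymbol{z})$. For the inductive step I would pick a transposition $s_a$ with $|\nu s_a|=|\nu|-1$ and set $\rho=\nu s_a$, so that $\nu=\rho s_a$ and $\nu^{-1}=s_a\rho^{-1}$. Applying the induction hypothesis for $\rho$ after the substitution $\boldsymbol{z}\mapsto\boldsymbol{z}^{s_a}$, and using $\bigl(\boldsymbol{z}^{s_a}\bigr)^{\rho^{-1}}=\boldsymbol{z}^{\nu^{-1}}$, yields
\[
\sum_{\mu} V_{\mu^{-1}(K_0)}\bigl(\boldsymbol{z}^{s_a}\bigr)\,X_{\rho,\mu}\bigl(\boldsymbol{z}^{\nu^{-1}}\bigr)=V_{K_0}\bigl(\boldsymbol{z}^{\nu^{-1}}\bigr),
\]
whose right-hand side already matches the goal, so it remains only to reconcile the two left-hand sides.

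To that end I would expand $X_{\rho,\mu}\bigl(\boldsymbol{z}^{\nu^{-1}}\bigr)$ by~\eqref{Xprod2} with its first index specialized to $\rho$ (so that $\rho s_a=\nu$), and expand $V_{\mu^{-1}(K_0)}\bigl(\boldsymbol{z}^{s_a}\bigr)$ by~\eqref{sym2}, obtaining four sums exactly as in Lemma~\ref{sum1}. Reindexing $\mu\mapsto\mu s_a$ in the two terms carrying $X_{\nu,\mu s_a}$ lets me pull out the common factor $X_{\nu,\mu}\bigl(\boldsymbol{z}^{\nu^{-1}}\bigr)$, leaving the scalar coefficient
\[
\frac{z_a-z_{a+1}}{z_a-z_{a+1}-1}\,V_{s_a\mu^{-1}(K_0)}\bigl(\boldsymbol{z}^{s_a}\bigr)-\frac{1}{z_a-z_{a+1}-1}\,V_{\mu^{-1}(K_0)}\bigl(\boldsymbol{z}^{s_a}\bigr).
\]
Applying~\eqref{sym2} once more to each $V\bigl(\boldsymbol{z}^{s_a}\bigr)$ here, the coefficient of $V_{s_a\mu^{-1}(K_0)}(\boldsymbol{z})$ cancels and the coefficient of $V_{\mu^{-1}(K_0)}(\boldsymbol{z})$ simplifies (with $c=z_a-z_{a+1}$) to $(c^2-1)/(c^2-1)=1$, so the whole coefficient equals $V_{\mu^{-1}(K_0)}(\boldsymbol{z})$ and the induction closes. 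I expect the only real difficulty to be this bookkeeping: tracking the left/right placement of $s_a$, performing the reindexing $\mu\mapsto\mu s_a$, and arranging the two uses of~\eqref{sym2} so that the cross terms cancel and the surviving coefficient collapses to $1$---the exact analogue of the cancellation of the ``second and third sums'' in the proof of Lemma~\ref{sum1}.
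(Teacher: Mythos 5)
Your proposal is correct and follows essentially the same route as the paper's proof: the same substitution ($\mu=\sigma_0\pi^{-1}$, $\sigma=\sigma_0\tau^{-1}$, in your notation $\nu$, $\mu$) reducing~\eqref{sum2f} to the equivalent identity~\eqref{sum2fn}, the same induction on the length with $\rho=\nu s_a$, and the same use of~\eqref{Xprod2} and~\eqref{sym2} with the reindexing $\mu\mapsto\mu s_a$. The only difference is cosmetic bookkeeping: you expand $X$ first and apply~\eqref{sym2} twice to collapse the scalar coefficient to $1$, whereas the paper expands everything into four sums and cancels them pairwise --- the same cancellation, grouped differently.
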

\begin{proof}
Recall the notation $\sigma_0(I_0)=\check{I_0}$. Transform formula~\eqref{sum2f} by making the substitutions $\mu=\sigma_0\pi^{-1}$, $\sigma=\sigma_0\tau^{-1}$,{\samepage
\begin{equation}\label{sum2fn}
\sum_{\sigma}V_{\sigma^{-1}(\check{I_0})}(\boldsymbol{z})X_{\mu, \sigma }\bigl(\boldsymbol{z}^{\mu^{-1}}\bigr)=V_{\check{I_0}}\bigl(\boldsymbol{z}^{\mu^{-1}}\bigr).
\end{equation}
The rest of the proof is analogous to that of Lemma $\ref{sum1}$.}

To prove formula~\eqref{sum2fn}, we will use induction on the length of $\mu$. For $\mu=\id$, formula~\eqref{sum2fn} is clear, and for $\mu=s_a$ with some $a=1,\dots,k-1$, formula~\eqref{sum2fn} coincides with formula~\eqref{sym2}. For the induction step, we find $a$ such that $|\mu s_a|=|\mu|-1$, and denote $\rho=\mu s_a$. Then by the induction assumption,
\begin{equation*}
\sum_{\sigma} V_{\sigma^{-1}(\check{I}_0)}(\boldsymbol{z})X_{\rho,\sigma }\bigl(\boldsymbol{z}^{\rho^{-1}}\bigr) =V_{\check{I}_0}\bigl(\boldsymbol{z}^{\rho^{-1}}\bigr),
\end{equation*}
and replacing here $\boldsymbol{z}$ by $\boldsymbol{z}^{s_a}$, we get
\begin{equation}\label{indstep2}
\sum_{\sigma} V_{\sigma^{-1}(\check{I}_0)}(\boldsymbol{z}^{s_a})X_{\rho,\sigma }\bigl(\boldsymbol{z}^{s_a\rho^{-1}}\bigr)=V_{\check{I}_0}\bigl(\boldsymbol{z}^{\mu^{-1}}\bigr)= V_{\check{I}_0}\bigl(\boldsymbol{z}^{s_a\rho ^{-1}}\bigr).
\end{equation}
Using formulae~\eqref{Xprod2},~\eqref{sym2}, the left-hand side of~\eqref{indstep2} becomes
\begin{gather*}
\sum_{\sigma} \frac{(z_{a+1}-z_a)^2}{(z_{a+1}-z_{a})^2-1}V_{s_a\sigma^{-1}(\check{I}_0)}(\boldsymbol{z})X_{\rho s_a,\sigma s_a}\bigl(\boldsymbol{z}^{s_a\rho^{-1}}\bigr) \\
\qquad{}-\sum_{\sigma}\frac{z_a-z_{a+1}}{(z_a-z_{a+1})^2-1}V_{s_a\sigma^{-1}(\check{I}_0)}(\boldsymbol{z})X_{\rho s_a,\sigma }\bigl(\boldsymbol{z}^{s_a\rho^{-1}}\bigr)
\\
\qquad{}+\sum_{\sigma} \frac{z_a-z_{a+1}}{(z_a-z_{a+1})^2-1}V_{\sigma^{-1}(\check{I}_0)}(\boldsymbol{z})X_{\rho s_a,\sigma s_a}\bigl(\boldsymbol{z}^{s_a\rho^{-1}}\bigr) \\
\qquad{}-\sum_{\sigma} \frac{1}{(z_{a+1}-z_{a})^2-1}V_{\sigma^{-1}(\check{I}_0)}(\boldsymbol{z})X_{\rho s_a, \sigma }\bigl(\boldsymbol{z}^{s_a\rho^{-1}}\bigr).
\end{gather*}
Changing the summation index in the first and second sums from $\sigma$ to $\sigma s_a$, we observe that the second and third sums cancel each other, while the first and the fourth sums combine together and simplify to the expression
\begin{equation*}
\sum_{\sigma} V_{\sigma^{-1}(\check{I}_0)}(\boldsymbol{z})X_{\rho s_a,\sigma }\bigl(\boldsymbol{z}^{s_a\rho^{-1}}\bigr) =\sum_{\sigma} V_{\sigma^{-1}(\check{I}_0)}(\boldsymbol{z})X_{\mu,\sigma }\bigl(\boldsymbol{z}^{\mu^{-1}}\bigr),
\end{equation*}
which appears on the left-hand side of formula~\eqref{sum2fn}.
\end{proof}

Using Lemmas $\ref{sum1}$ and $\ref{sum2}$, we evaluate the sums over $\sigma$ and $\tau$ on the right-hand side of formula~\eqref{longeq} and get the equality
\begin{equation}\label{keypoint}
\sum_{\sigma\in S_k} \widetilde{V}_{\sigma(J_0)}(\boldsymbol{z}) V_{\sigma(I_0)}(\boldsymbol{z} )=\sum_{\pi} V_{\sigma_0(I_0)}(\boldsymbol{z}^{\pi\sigma_0})\widetilde{V}_{J_0}(\boldsymbol{z}^{\pi})\Phi(\boldsymbol{z}^{\pi\sigma_0}).
\end{equation}
Using formula~\eqref{twosums} on the left-hand side and making the substitution $\pi=\sigma\sigma_0$ on the right-hand side, we obtain that $\eqref{keypoint}$ can be written as
\begin{equation*}
\sum_{(I,J)\in \mathcal{S}_{p,q,r,k}}\widetilde{V}_J( \boldsymbol{z}) V_I(\boldsymbol{z})=\frac{1}{C_{p,q,r,k}} \sum_{\sigma} V_{\sigma_0(I_0)}(\boldsymbol{z}^{\sigma})\widetilde{V}_{J_0}(\boldsymbol{z}^{\sigma\sigma_0})\Phi(\boldsymbol{z}^{ \sigma}),
\end{equation*}
which is formula~\eqref{new54}. Proposition~\ref{symprop} is proved.

\appendix

\section{Proof of Proposition~\ref{twotensor}}\label{appendixA}

In this appendix, we will consider only the algebra $Y(\mathfrak{gl}_2)$ and, for convenience, we will not write the superscript $\ltwo$.
We will use the commutation relations
\begin{gather}
T_{11} (u)T_{11} (t)=T_{11} (t)T_{11} (u),\qquad
 T_{12} (u) T_{12} (t)= T_{12} (t) T_{12} (u),\nonumber\\
 T_{22} (u) T_{22} (t)= T_{22} (t) T_{22} (u),\label{2com1}
\\ \label{2com2}
T_{11} (u) T_{12} (t)=\frac{u-t-1}{u-t} T_{12} (t) T_{11} (u) +\frac{1}{u-t} T_{12} (u) T_{11} (t),
\\ \label{2com3}
T_{22} (u) T_{12} (t)=\frac{u-t+1}{u-t} T_{12} (t) T_{22} (u)-\frac{1}{u-t} T_{12} (u) T_{22} (t),
\end{gather}
following from the defining relations in $Y(\mathfrak{g l}_{2})$, see~\eqref{trel}. We will also use the next statement.
\begin{Proposition}
One has
\begin{gather}
T_{11} (u) T_{12} (t_1) \cdots T_{12} (t_k)=\prod_{i=1}^k\frac{u-t_i-1}{u-t_i} T_{12} (t_1)\cdots T_{12} (t_k) T_{11} (u)\label{ABf}\\[-1mm]
\qquad{}+\sum_{l=1}^k \frac{1}{u-t_l}\prod_{\substack{m=1\\m\neq l}}^k \frac{t_l-t_m-1}{t_l-t_m} T_{12} (t_1)\cdots T_{12} (t_{l-1}) T_{12} (t_{l+1})\cdots T_{12} (t_k) T_{12} (u)T_{11} (t_l),\nonumber
\\[-1mm]
T_{22} (u) T_{12} (t_1) \cdots T_{12} (t_k)=\prod_{i=1}^k\frac{u-t_i+1}{u-t_i} T_{12} (t_1)\cdots T_{12} (t_k) T_{22} (u)\label{DBf}\\[-1mm]
\qquad{} -\sum_{l=1}^k \frac{1}{u-t_l}\prod_{\substack{m=1\\m\neq l}}^k \frac{t_l-t_m+1}{t_l-t_m} T_{12} (t_1)\cdots T_{12} (t_{l-1}) T_{12} (t_{l+1})\cdots T_{12} (t_k) T_{12} (u)T_{22} (t_l).\nonumber
\end{gather}
\end{Proposition}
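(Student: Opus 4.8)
The plan is to prove both \eqref{ABf} and \eqref{DBf} by induction on $k$, the base case $k=1$ being precisely the relations \eqref{2com2} and \eqref{2com3}. I would treat \eqref{ABf} in detail; the derivation of \eqref{DBf} is word-for-word the same after replacing each factor $t-t'-1$ by $t-t'+1$, replacing $T_{11}$ by $T_{22}$, and inserting the overall minus sign in the second sum.

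For the inductive step I would first apply \eqref{2com2} to carry $T_{11}(u)$ past the single factor $T_{12}(t_1)$, obtaining
\[
T_{11}(u)T_{12}(t_1)\cdots T_{12}(t_k)=\frac{u-t_1-1}{u-t_1}\,T_{12}(t_1)\bigl(T_{11}(u)T_{12}(t_2)\cdots T_{12}(t_k)\bigr)+\frac{1}{u-t_1}\,T_{12}(u)\bigl(T_{11}(t_1)T_{12}(t_2)\cdots T_{12}(t_k)\bigr).
\]
To each of the two bracketed products of $k-1$ factors I would apply the induction hypothesis, with spectral parameter $u$ in the first and $t_1$ in the second. Since all the $T_{12}$'s commute by \eqref{2com1}, the newly created $T_{12}(u)$ and $T_{12}(t_1)$ can be shifted to the rightmost $T_{12}$-slot, putting every resulting monomial into the normal form of \eqref{ABf}: a product of $T_{12}$'s (with $T_{12}(u)$ last) followed by a single $T_{11}(\cdot)$.

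I would then sort the monomials by the final factor $T_{11}(\cdot)$. The monomial ending in $T_{11}(u)$ arises only from the leading term of the first bracket, with coefficient $\tfrac{u-t_1-1}{u-t_1}\prod_{i=2}^{k}\tfrac{u-t_i-1}{u-t_i}=\prod_{i=1}^{k}\tfrac{u-t_i-1}{u-t_i}$, reproducing the first term of \eqref{ABf}. The monomial ending in $T_{11}(t_1)$ arises only from the leading term of the second bracket and, after the reordering above, is exactly the $l=1$ summand. For each $l=2,\dots,k$, the monomial ending in $T_{11}(t_l)$ receives two contributions: the $l$-th exchange term of the first bracket, carrying the prefactor $\tfrac{u-t_1-1}{u-t_1}$, and the $l$-th exchange term of the second bracket, carrying the prefactor $\tfrac{1}{u-t_1}$ and evaluated with argument $t_1$ in place of $u$.

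The only step beyond bookkeeping is the partial-fraction identity that fuses these two contributions into the coefficient demanded by \eqref{ABf}. After dividing out the shared product $\prod_{m=2,\,m\neq l}^{k}\tfrac{t_l-t_m-1}{t_l-t_m}$, it reduces to
\[
\frac{u-t_1-1}{(u-t_1)(u-t_l)}+\frac{1}{(u-t_1)(t_1-t_l)}=\frac{1}{u-t_l}\cdot\frac{t_l-t_1-1}{t_l-t_1},
\]
which one checks by clearing denominators: over the common denominator $(u-t_1)(u-t_l)(t_1-t_l)$ the numerator on the left collapses to $(u-t_1)(t_1-t_l+1)$, and cancelling $(u-t_1)$ leaves exactly the right-hand side. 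This closes the induction. I do not foresee any real obstacle; the single point requiring care is the systematic use of \eqref{2com1} to align the omitted-factor products $T_{12}(t_1)\cdots T_{12}(t_{l-1})T_{12}(t_{l+1})\cdots T_{12}(t_k)T_{12}(u)$ before the coefficients are compared.
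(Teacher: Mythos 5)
Your proof is correct, and while it is an induction on $k$ from the same base relations \eqref{2com1}--\eqref{2com3}, it runs the induction in the opposite direction from the paper's, which changes the character of the closing identity. The paper peels off the \emph{last} factor: it first uses the induction hypothesis to move $T_{11}(u)$ through $T_{12}(t_1)\cdots T_{12}(t_{k-1})$ and only then applies \eqref{2com2} to the products $T_{11}(u)T_{12}(t_k)$ and $T_{11}(t_l)T_{12}(t_k)$. The price of that order is that the terms ending in $T_{11}(t_k)$ arise from $k$ distinct sources, and fusing them requires the $k$-term rational identity
\begin{equation*}
\frac{1}{u-t_k}\prod_{i=1}^{k-1}\frac{u-t_i-1}{u-t_i}+\sum_{l=1}^{k-1}\frac{1}{(u-t_l)(t_l-t_k)}\prod_{\substack{m=1\\ m\neq l}}^{k-1}\frac{t_l-t_m-1}{t_l-t_m}=\frac{1}{u-t_k}\prod_{m=1}^{k-1}\frac{t_k-t_m-1}{t_k-t_m},
\end{equation*}
which the paper invokes without proof. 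You peel off the \emph{first} factor: a single application of \eqref{2com2}, followed by two applications of the induction hypothesis with spectral parameters $u$ and $t_1$. Then each monomial ending in $T_{11}(t_l)$, $l\geq 2$, receives exactly two contributions, and the induction closes with your two-term partial-fraction identity, checked by clearing denominators (I verified the collapse of the numerator to $(u-t_1)(t_1-t_l+1)$; it is correct, and the sign-flipped analogue with $t-t'-1$ replaced by $t-t'+1$ closes \eqref{DBf} in the same way). So your route trades the paper's unproved $k$-term summation identity for an elementary one-line verification, at no cost in length; the bookkeeping via commutativity \eqref{2com1} of the $T_{12}$'s, which you flag as the only delicate point, is handled the same way in both arguments.
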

\begin{proof}
The statement goes back to \cite{FT}. We will prove it by induction on $k$. Consider formula~\eqref{ABf}.
The statement for $k=1$ is given by formula~\eqref{2com2}. We use the induction assumption to move $T_{11}(u)$ through the product $T_{12} (t_1) \cdots T_{12} (t_{k-1})$:
\begin{gather} \label{ABi}
T_{11} (u) T_{12} (t_1) \cdots T_{12} (t_{k-1}) T_{12} (t_{k})=\prod_{i=1}^{k-1}\frac{u-t_i-1}{u-t_i} T_{12} (t_1)\cdots T_{12} (t_{k-1}) T_{11} (u)T_{12} (t_{k})\\[-1mm]
+\sum_{l=1}^{k-1} \frac{1}{u-t_l}\prod_{\substack{m=1\\m\neq l}}^{k-1} \frac{t_l-t_m-1}{t_l-t_m} T_{12} (t_1)\cdots T_{12} (t_{l-1}) T_{12} (t_{l+1})\cdots T_{12} (t_{k-1}) T_{12} (u)T_{11} (t_l)T_{12} (t_{k}).\nonumber
\end{gather}
Then we apply~\eqref{2com2} to the product $ T_{11} (u)T_{12} (t_{k})$ and $T_{11} (t_l)T_{12} (t_{k})$ and the right-hand side of~\eqref{ABi} becomes
\begin{gather*}
 \prod_{i=1}^{k}\frac{u-t_i-1}{u-t_i} T_{12} (t_1)\cdots T_{12} (t_{k-1}) T_{12}(t_{k}) T_{11} (u)\\[-1mm]
\quad{}+\frac{1}{u-t_{k}} \prod_{i=1}^{k-1}\frac{u-t_i-1}{u-t_i} T_{12} (t_1)\cdots T_{12} (t_{k-1}) T_{12}(u) T_{11}(t_k) \\[-1mm]
\quad{}+ \sum_{l=1}^{k-1} \frac{1}{u-t_l}\prod_{\substack{m=1\\m\neq l}}^k \frac{t_l-t_m-1}{t_l-t_m} \\[-1mm]
\quad\hphantom{+ \sum_{l=1}^{k-1} \frac{1}{u-t_l}\prod_{\substack{m=1\\m\neq l}}^k}{}\, \times T_{12} (t_1)\cdots T_{12} (t_{l-1}) T_{12} (t_{l+1})\cdots T_{12} (t_{k-1})T_{12} (u) T_{12} (t_{k})T_{11} (t_l)\\
\quad{} +\sum_{l=1}^{k-1} \frac{1}{u-t_l}\frac{1}{t_l-t_k}\prod_{\substack{m=1\\m\neq l}}^{k-1} \frac{t_l-t_m-1}{t_l-t_m} \\
\quad\hphantom{+\sum_{l=1}^{k-1} \frac{1}{u-t_l}\frac{1}{t_l-t_k}\prod_{\substack{m=1\\m\neq l}}^{k-1}}{}\, \times T_{12} (t_1)\cdots T_{12} (t_{l-1}) T_{12} (t_{l+1})\cdots T_{12} (t_{k-1})T_{12} (u) T_{12} (t_l)T_{11} (t_k).
\end{gather*}
The first term here coincides with the first term on the right-hand side of formula~\eqref{ABf}. The third term here is the second term of~\eqref{ABf} without $l=k$ summand. We also used that $T_{12}(u)$ and $T_{12}(t_k)$ commute, see~\eqref{2com1}. The second and fourth summands combine into the product
\begin{equation}\label{l=m}
\frac{1}{u-t_k}\prod_{m=1}^{k-1} \frac{t_k-t_m-1}{t_k-t_m} T_{12} (t_1)\cdots T_{12} (t_{k-1}) T_{12} (u)T_{11} (t_k),
\end{equation}
using the following identity:
\begin{gather*}
\frac{1}{u-t_k} \prod_{i=1}^{k-1}\frac{u-t_i-1}{u-t_i} + \sum_{l=1}^{k-1} \frac{1}{(u-t_l)(t_l-t_k)}\prod_{\substack{m=1\\m\neq l}}^{k-1} \frac{t_l-t_m-1}{t_l-t_m}= \frac{1}{u-t_k}\prod_{m=1}^{k-1} \frac{t_k-t_m-1}{t_k-t_m}.
\end{gather*}
The product~\eqref{l=m} is exactly the summand with $l=k$ of the second term in~\eqref{ABf}. Formula~\eqref{ABf} is proved.

The proof of formula~\eqref{DBf} is similar to that of formula~\eqref{ABf} with relation~\eqref{2com3} used instead of~\eqref{2com2}.
\end{proof}

Recall that for the $\mathfrak{gl}_2$ case we have
\begin{equation*}
{\mathbb{B}}_\xi(\boldsymbol{t})=T_{12} (t_{1} ) \cdots T_{12} (t_{\xi} ),
\end{equation*}
and thus Proposition~\ref{twotensor} can be rewritten as follows.
\begin{Proposition} \label{twotensorsA}
Let $\xi$ be a nonnegative integer and $\boldsymbol{t}=$ $(t_{1}, \dots, t_{\xi} )$. Then
\begin{gather}
\Delta ( T_{12} (t_{1} ) \cdots T_{12} (t_{\xi} ) )
\label{cop2}\\
\qquad{}=\sum_{\eta=0}^{\xi} \frac{1}{(\xi-\eta) ! \eta!} \Symb_{\boldsymbol{t}}\Biggl[ \Biggl(\prod_{i=1}^{\eta} T_{12} (t_i) \otimes \prod_{j=\eta+1}^{\xi } T_{12} (t_j) \Biggr)
\Biggl(\prod_{k=\eta+1}^{\xi } T _{22}(t_{k})\otimes \prod_{l=1}^{\eta} T _{11}(t_l)\Biggr) \Biggr].\nonumber
\end{gather}

\end{Proposition}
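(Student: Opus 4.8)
The plan is to prove \eqref{cop2} by induction on $\xi$, peeling off the leftmost factor $T_{12}(t_1)$. Since $\Delta$ is an algebra homomorphism and, by \eqref{coproduct}, $\Delta(T_{12}(t))=T_{12}(t)\otimes T_{11}(t)+T_{22}(t)\otimes T_{12}(t)$, I would start from
\[
\Delta(T_{12}(t_1)\cdots T_{12}(t_\xi))=\bigl(T_{12}(t_1)\otimes T_{11}(t_1)+T_{22}(t_1)\otimes T_{12}(t_1)\bigr)\,\Delta(T_{12}(t_2)\cdots T_{12}(t_\xi)),
\]
and substitute the inductive hypothesis for $\Delta(T_{12}(t_2)\cdots T_{12}(t_\xi))$, which is a sum over $\eta'$ of $\Symb$-symmetrized canonical terms in the variables $t_2,\dots,t_\xi$. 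The base cases $\xi=0,1$ are immediate, the latter being the definition of $\Delta(T_{12}(t))$ itself.

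Next, I would bring each of the two resulting families of terms back into the canonical shape appearing in \eqref{cop2}, where the $T_{12}$ factors stand to the left of the $T_{22}$ factors in the first tensor component and to the left of the $T_{11}$ factors in the second. For the contribution of $T_{22}(t_1)\otimes T_{12}(t_1)$ the second component is already canonical (the prepended $T_{12}(t_1)$ joins the block of $T_{12}$'s from the left), while in the first component the prepended $T_{22}(t_1)$ must be moved to the right through the block $T_{12}(t_2)\cdots$; this is exactly formula \eqref{DBf}. Symmetrically, for the contribution of $T_{12}(t_1)\otimes T_{11}(t_1)$ the first component is already canonical, and the prepended $T_{11}(t_1)$ in the second component is moved to the right through the block of $T_{12}$'s by formula \eqref{ABf}. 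In this way the first contribution feeds the summand with index $\eta=\eta'$ and the second feeds the summand with $\eta=\eta'+1$.

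The heart of the argument is the bookkeeping of the symmetrizations. Each application of \eqref{ABf} or \eqref{DBf} replaces a single operator--block product by a leading term together with a sum over the landing position $l$ of the moved operator, with explicit rational coefficients of the form $\tfrac{1}{t_1-t_l}\prod_{m\neq l}\tfrac{t_l-t_m\mp1}{t_l-t_m}$. I would show that, once the inductive symmetrization over $t_2,\dots,t_\xi$ (carrying the weight $Q_{\xi-1}$, see \eqref{Qfun} and \eqref{barsym}) is combined with these coefficients, the newly introduced variable $t_1$ is placed on equal footing with the others: the inductive weight $Q_{\xi-1}$ together with the per-step factors reproduces the weight $Q_\xi$ after symmetrization, and summing over the landing position together with the existing symmetrization reconstitutes the full $\Symb_{\boldsymbol t}$ over all of $t_1,\dots,t_\xi$. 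Tracking how the number of variables in each of the two blocks changes (it grows by one in the $T_{22}$-block for the first contribution and in the $T_{11}$-block for the second) then accounts for the passage from the inductive normalizations to the multinomial factor $\tfrac{1}{(\xi-\eta)!\,\eta!}$, via the two elementary ratios $\tfrac{1}{\xi-\eta}$ and $\tfrac{1}{\eta}$ absorbed from the respective position sums.

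I expect the main obstacle to be precisely this last recombination step: verifying the rational-function identity by which $Q_{\xi-1}$ times the coefficients produced by \eqref{ABf}/\eqref{DBf} equals $Q_\xi$ under symmetrization, and checking that the combinatorial normalization comes out exactly. This is of the same partial-fraction type as the identity already used to establish \eqref{ABf}, so I would isolate it as a separate symmetrization lemma and verify it by the same $\Sym$-over-$S_\xi$ manipulation, reducing if necessary to the case of the two variables actually being permuted.
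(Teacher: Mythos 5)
Your skeleton — induction on $\xi$, peeling off $T_{12}(t_1)$, expanding $\Delta(T_{12}(t_1))$ by \eqref{gl2cop}, and pushing $T_{11}(t_1)$, $T_{22}(t_1)$ through the $T_{12}$-blocks via \eqref{ABf} and \eqref{DBf} — is exactly the paper's, but the step you yourself identify as the heart of the argument is wrong, and it is not of the partial-fraction type you expect. The non-leading terms of \eqref{ABf}/\eqref{DBf} (your ``sums over the landing position $l$'') do \emph{not} get absorbed into the symmetrization, and the ratios $\tfrac{1}{\eta}$, $\tfrac{1}{\xi-\eta}$ do not come from them. Look at the structure of such a term: moving $T_{11}(t_1)$ through the block in the second tensor factor replaces $T_{12}(t_l)$ by $T_{12}(t_1)$ there, while the first tensor factor already carries the prepended $T_{12}(t_1)$. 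So the exchange term contains $T_{12}(t_1)$ in \emph{both} tensor components — already at $\xi=2$ one gets $\tfrac{1}{t_1-t_2}\,T_{12}(t_1)T_{22}(t_2)\otimes T_{12}(t_1)T_{11}(t_2)$ from the $T_{12}\otimes T_{11}$ family. No term of this shape occurs in the canonical form on the right of \eqref{cop2} for any $\eta$, since there each variable sits in the $T_{12}$-slot of exactly one component; hence no symmetrization identity can fold these terms into $\Symb_{\boldsymbol t}$. What actually happens is that the exchange sums produced by the two coproduct families cancel each other identically: at $\xi=2$ the $T_{22}\otimes T_{12}$ family contributes precisely $-\tfrac{1}{t_1-t_2}\,T_{12}(t_1)T_{22}(t_2)\otimes T_{12}(t_1)T_{11}(t_2)$, and in general this is the paper's assertion $Y_2(\boldsymbol{t})+Y_4(\boldsymbol{t})=0$, proved by re-indexing the inner sum with the transposition $s_{l,\eta+1}$ so that all landing positions give the same summand, after which the two families differ exactly by a sign. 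A proof organized around ``absorbing'' these terms would stall at this point.

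The normalizations also have a different source than you propose. The paper first uses identity \eqref{n!} to eliminate the within-block $Q$-weights altogether, rewriting the $\eta$-th summand of \eqref{cop2} as a plain sum $\sum_{\rho\in S_\xi}P_{\eta,\xi-\eta}(\boldsymbol{t}^\rho)$, where $P_{\eta,\xi-\eta}$ carries only the cross-block factors $\prod_{i\leq\eta<j}\tfrac{t_i-t_j-1}{t_i-t_j}$ and is symmetric within each block. The factors $\eta$ and $\xi-\eta$ then arise purely combinatorially from the \emph{leading} terms: splitting the target sum according to whether $\rho^{-1}(1)$ lies in $\{1,\dots,\eta\}$ or $\{\eta+1,\dots,\xi\}$, the within-block symmetry of $P$ gives multiplicities $\eta$ and $\xi-\eta$, which convert $\tfrac{1}{(\xi-\eta)!\,\eta!}$ into the inductive normalizations $\tfrac{1}{(\xi-\eta)!(\eta-1)!}$ and $\tfrac{1}{(\xi-\eta-1)!\,\eta!}$ and match the two leading families ($Y_1$ and $Y_3$) exactly. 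Your one correct instinct — that the leading coefficients $\prod_j\tfrac{t_1-t_j\mp1}{t_1-t_j}$ supply precisely the cross factors needed to put $t_1$ on an equal footing — is indeed how $Y_1+Y_3$ reproduces the target. To repair the proposal: insert the preliminary reduction via \eqref{n!} (it disposes of the $Q_{\xi-1}$-versus-$Q_\xi$ bookkeeping you were worried about), and replace the absorption step by the pairwise cancellation of the two exchange families.
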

\begin{Remark}
Notice that according to~\eqref{2com1}, the factors in each of the large products commute among themselves, so the order of the factors is irrelevant.
\end{Remark}
\begin{proof}
Consider the summand from the right-hand side of~\eqref{cop2} with a given $\eta$,
\begin{equation} \label{start}
F_{\eta,\xi-\eta}(\boldsymbol{t})= \Symb_{\boldsymbol{t}}\Biggl[ \Biggl(\prod_{i=1}^{\eta} T_{12} (t_i) \otimes \prod_{j=\eta+1}^{\xi } T_{12} (t_j) \Biggr)
\Biggl(\prod_{k=\eta+1}^{\xi } T _{22}(t_{k})\otimes \prod_{l=1}^{\eta} T _{11}(t_l)\Biggr) \Biggr].
\end{equation}
Let
\begin{align*}
&P_{\eta,\xi-\eta} (\boldsymbol{t})= \prod_{1\leq i\leq\eta<j \leq \xi} \frac{t_i-t_j-1}{t_i-t_j}\\
& \hphantom{P_{\eta,\xi-\eta} (\boldsymbol{t})=}{}\times{}\Biggl(\prod_{i=1}^{\eta} T_{12} (t_i) \otimes \prod_{j=\eta+1}^{\xi } T_{12} (t_j) \Biggr) \Biggl(\prod_{k=\eta+1}^{\xi } T _{22}(t_{k})\otimes \prod_{l=1}^{\eta} T _{11}(t_l)\Biggr),\\
& U_{\eta,\xi-\eta}(\boldsymbol{t})= \prod_{1\leq i<j\leq \eta} \frac{t_{i}-t_{j}-1}{t_{i}-t_{j}} \prod_{\eta+1\leq i<j\leq \xi} \frac{t_{i}-t_{j}-1}{t_{i}-t_{j}}, \qquad
 \boldsymbol{t}^\sigma=\bigl(t_{\sigma(1)},\dots,t_{\sigma(\xi)}\bigr).
\end{align*}
Using this notation, formula~\eqref{start} can be written as
\begin{equation*}
F_{\eta,\xi-\eta}(\boldsymbol{t})=\sum_{\sigma\in S_\xi}\ U_{\eta,\xi-\eta}(\boldsymbol{t}^\sigma) P_{\eta,\xi-\eta}(\boldsymbol{t}^\sigma).
\end{equation*}
Observe that $ F_{\eta,\xi-\eta}(\boldsymbol{t})$ is symmetric in $t_1,\dots,t_\xi$.
Denote by $S_\eta \times S_{\xi-\eta} $ the subgroup of $S_\xi$ stabilizing the subsets $\{1,\dots,\eta\}$ and $\{\eta+1,\dots,\xi\}$. We have
\begin{align*}
\begin{split}
F_{\eta,\xi-\eta}(\boldsymbol{t})&{}=\frac{1}{\eta!(\xi-\eta)!} \sum_{\tau\in S_\eta\times S_{\xi-\eta}} F_{\eta,\xi-\eta}(\boldsymbol{t}^\tau)\\
&{}=\frac{1}{\eta!(\xi-\eta)!} \sum_{\tau\in S_\eta\times S_{\xi-\eta}} \sum_{\sigma\in S_\xi}\ U_{\eta,\xi-\eta}(\boldsymbol{t}^{\tau\sigma}) P_{\eta,\xi-\eta}(\boldsymbol{t}^{\tau\sigma}).
\end{split}
\end{align*}
Changing the summation variable in the inner sum, $\sigma=\tau^{-1}\rho\tau$, and using the fact that $P_{\eta,\xi-\eta}(\boldsymbol{t}^{\rho\tau})=P_{\eta,\xi-\eta}(\boldsymbol{t}^{\rho})$ for all $\tau \in S_\eta\times S_{\xi-\eta}$, we get
\begin{align*}
F_{\eta,\xi-\eta}(\boldsymbol{t})&{}=\frac{1}{\eta!(\xi-\eta)!} \sum_{\tau\in S_\eta\times S_{\xi-\eta}} \sum_{\rho\in S_\xi} U_{\eta,\xi-\eta}(\boldsymbol{t}^{\rho\tau}) P_{\eta,\xi-\eta}(\boldsymbol{t}^{\rho\tau})\\
&{}=\frac{1}{\eta!(\xi-\eta)!} \sum_{\rho\in S_\xi} P_{\eta,\xi-\eta}(\boldsymbol{t}^{\rho}) \sum_{\tau\in S_\eta\times S_{\xi-\eta}} U_{\eta,\xi-\eta}(\boldsymbol{t}^{\rho\tau}).
\end{align*}
Furthermore, using the identity
\begin{equation}
\label{n!}
\sum_{\tau\in S_n} \prod_{1\leq i<j\leq n} \frac{x_{\tau(i)}-x_{\tau(j)}-1}{x_{\tau(i)}-x_{\tau(j)}}=n!,
\end{equation}
we obtain that
$
\sum_{\tau\in S_\eta\times S_{\xi-\eta}}U_{\eta,\xi-\eta}(\boldsymbol{t}^{\rho\tau})=\eta!(\xi-\eta)!$
 and
\begin{equation}\label{formF}
F_{\eta,\xi-\eta}(\boldsymbol{t})=\sum_{\rho\in S_\xi} P_{\eta,\xi-\eta}(\boldsymbol{t}^\rho).
\end{equation}

Using formula~\eqref{formF}, the statement of Proposition~\ref{twotensorsA} can be formulated as follows:
\begin{equation}\label{state}
\Delta ( T_{12} (t_{1} ) \cdots T_{12} (t_{\xi} ) )=\sum_{\eta=0}^\xi \frac{1}{(\xi-\eta)!\eta!}\sum_{\rho\in S_\xi} P_{\eta,\xi-\eta}(\boldsymbol{t}^\rho).
\end{equation}
We prove this formula by induction on $\xi$. The base case $\xi=1$ is given by formula~\eqref{coproduct}:
\begin{equation}\label{gl2cop}
\Delta(T_{12} (t_1))=T_{12} (t_1)\otimes T_{11} (t_1)+T_{22} (t_1)\otimes T_{12} (t_1).
\end{equation}
To make the induction step, we use that
\begin{equation}\label{RHS}
\Delta ( {\mathbb{B}}_\xi(\boldsymbol{t}) )=\Delta\bigl(T_{12} (t_1)\bigr) \Delta (T_{12} (t_2)\cdots T_{12} (t_\xi)),
\end{equation}
expand the first factor according to~\eqref{gl2cop}, and apply the induction assumption to expand the second factor. Denote by $S_{\xi-1}'\subset S_\xi$ the subgroup of permutations $\rho$, such that $\rho(1)=1$. Then the right-hand side of formula~\eqref{RHS} becomes{\samepage
\begin{gather}
T_{12} (t_1)\otimes T_{11} (t_1)\sum_{\eta=1}^{\xi}\frac{1}{(\xi-\eta)!(\eta-1)!} \sum_{\tau\in S_{\xi-1}'} P_{\eta-1,\xi-\eta}\bigl(t_{\tau(2)},\dots,t_{\tau(\xi)}\bigr)\nonumber\\
\qquad{}+T_{22} (t_1)\otimes T_{12} (t_1) \sum_{\eta=0}^{\xi-1}\frac{1}{(\xi-1-\eta)!\eta!} \sum_{\rho\in S_{\xi-1}'} P_{\eta,\xi-\eta-1}\bigl(t_{\rho(2)},\dots,t_{\rho(\xi)}\bigr),\label{TTF}
\end{gather}}%

\pagebreak

\noindent
where in the first term we shifted the summation variable of the exterior sum.
Using the definition of $P_{\eta,\xi-1-\eta}(\boldsymbol{t}) $ and $P_{\eta-1,\xi-\eta}(\boldsymbol{t}) $, we further expand expression~\eqref{TTF}:
\begin{gather*}
\sum_{\eta=1}^{\xi} \frac{1}{(\xi-\eta)!(\eta-1)! } \sum_{\tau\in S_{\xi-1}'} \prod_{1< i\leq\eta<j \leq \xi} \frac{t_{\tau(i)}-t_{\tau(j)}-1}{t_{\tau(i)}-t_{\tau(j)}} \\[1mm]
\quad\times \Biggl(T_{12}(t_1) \prod_{i=2}^{\eta} T_{12} \bigl(t_{\tau(i)}\bigr) \otimes T_{11}(t_1)\prod_{j=\eta+1}^{\xi} T_{12} \bigl(t_{\tau(j)}\bigr) \Biggr) \Biggl(\prod_{k=\eta+1}^{\xi } T _{22}\bigl(t_{\tau(k)}\bigr)\otimes \prod_{l=2}^{\eta} T _{11}\bigl(t_{\tau(l)}\bigr)\Biggr) \\[1mm]
 + \sum_{\eta=0}^{\xi-1} \frac{1}{(\xi-\eta-1) ! \eta!} \sum_{\rho\in S_{\xi-1}'} \prod_{1< i\leq\eta+1<j \leq \xi} \frac{t_{\rho(i)}-t_{\rho(j)}-1}{t_{\rho(i)}-t_{\rho(j)}} \\[1mm]
\quad\times \Biggl(T_{22}(t_1) \prod_{i=2}^{\eta+1} T_{12} \bigl(t_{\rho(i)}\bigr) \otimes T_{12}(t_1)\prod_{j=\eta+2}^{\xi } T_{12} \bigl(t_{\rho(j)}\bigr) \Biggr)
\Biggl(\prod_{k=\eta+2}^{\xi } T _{22}\bigl(t_{\rho(k)}\bigr)\otimes \prod_{l=2}^{\eta+1} T _{11}\bigl(t_{\rho(l)}\bigr)\Biggr).
\end{gather*}
In the first term, we move $T_{11}(t_1)$ through the product \smash{$\prod_{j=\eta+1}^{\xi } T_{12} \bigl(t_{\tau(j)}\bigr)$} using formula~\eqref{ABf}:
\begin{align*}
&T_{11} (t_1)\prod_{j=\eta+1}^{\xi } T_{12} \bigl(t_{\tau(j)}\bigr)=\Biggl( \prod_{j=\eta+1}^{\xi } \frac{t_1-t_{\tau(j)}-1}{t_1-t_{\tau(j)}} T_{12} \bigl(t_{\tau(j)}\bigr)\Biggr) T_{11} (t_1) \\[1mm]
&\qquad{}+\sum_{p=\eta+1}^\xi \frac{1}{t_1-t_{\tau(p)}}\Biggl( \prod_{\substack{j=\eta+1\\j\neq p}}^\xi \frac{t_{\tau(p)}-t_{\tau(j)}-1}{t_{\tau(p)}-t_{\tau(j)}} T_{12} \bigl(t_{\tau(j)}\bigr)\Biggr) T_{12} (t_1)T_{11} \bigl(t_{\tau(p)}\bigr).
\end{align*}
Similarly, in the second term we move $T_{22}(t_1)$ through the product $\prod_{m=2}^{\eta+1} T_{12} \bigl(t_{\rho(m)}\bigr)$ using formula~$\eqref{DBf}$:
\begin{align*}
T_{22} (t_1)\prod_{i=2}^{\eta+1 } T_{12} \bigl(t_{\rho(i)}\bigr)={}&\Biggl( \prod_{i=2}^{\eta+1}\frac{t_1-t_{\rho(i)}+1}{t_1-t_{\rho(i)}} T_{12} \bigl(t_{\rho(i)}\bigr)\Biggr) T_{22} (t_1)\\[1mm]
&{}{-}\sum_{s=2}^{\eta+1} \frac{1}{t_1-t_{\rho(s)}}\Biggl( \prod_{\substack{i=2\\i\neq s}}^{\eta+1} \frac{t_{\rho(s)}-t_{\rho(i)}+1}{t_{\rho(s)}-t_{\rho(i)}} T_{12} \bigl(t_{\rho(i)}\bigr) \Biggr) T_{12} (t_1)T_{22} \bigl(t_{\rho(s)}\bigr),
\end{align*}
After all, the right-hand side of~\eqref{RHS} becomes a sum of four terms:
\begin{equation*}
\Delta ( {\mathbb{B}}_\xi(\boldsymbol{t}) )=Y_1(\boldsymbol{t})+Y_2(\boldsymbol{t})+Y_3(\boldsymbol{t})+Y_4(\boldsymbol{t}),
\end{equation*}
where
\begin{gather*}
 Y_1(\boldsymbol{t})= \sum_{\eta=1}^{\xi} \frac{1}{(\xi-\eta) ! (\eta-1)!} \sum_{\tau\in S_{\xi-1}'} \Biggl[ \prod_{l=\eta+1}^{\xi } \frac{t_1-t_{\tau(l)}-1}{t_1-t_{\tau(l)}} \prod_{1< i\leq\eta<j \leq \xi} \frac{t_{\tau(i)}-t_{\tau(j)}-1}{t_{\tau(i)}-t_{\tau(j)}}\\[1mm]
 \hphantom{Y_1(\boldsymbol{t})= \sum_{\eta=1}^{\xi} \frac{1}{(\xi-\eta) ! (\eta-1)!} \sum_{\tau\in S_{\xi-1}'} \Biggl[}{}
 \times\Biggl(T_{12}(t_1) \prod_{i=2}^{\eta} T_{12} \bigl(t_{\tau(i)}\bigr) \otimes \prod_{j=\eta+1}^{\xi} T_{12} \bigl(t_{\tau(j)}\bigr) \Biggr)\\[1mm]
 \hphantom{Y_1(\boldsymbol{t})= \sum_{\eta=1}^{\xi} \frac{1}{(\xi-\eta) ! (\eta-1)!} \sum_{\tau\in S_{\xi-1}'} \Biggl[}{}
 \times \Biggl(\prod_{k=\eta+1}^{\xi } T _{22}\bigl(t_{\tau(k)}\bigr)\otimes T_{11}(t_1) \prod_{l=2}^{\eta} T _{11}\bigl(t_{\tau(l)}\bigr)\Biggr)\Biggr],
\\
Y_2(\boldsymbol{t})= \sum_{\eta=1}^{\xi-1} \frac{1}{(\xi-\eta) ! (\eta-1)!} \\
\hphantom{Y_2(\boldsymbol{t})= }{}
\times \sum_{\tau\in S_{\xi-1}'} \sum_{l=\eta+1}^\xi \Biggl[ \frac{1}{t_1-t_{\tau(l)}}\prod_{1< i\leq\eta<j \leq \xi} \frac{t_{\tau(i)}-t_{\tau(j)}-1}{t_{\tau(i)}-t_{\tau(j)}} \prod_{\substack{m=\eta+1\\m\neq l}}^\xi \frac{t_{\tau(l)}-t_{\tau(m)}-1}{t_{\tau(l)}-t_{\tau(m)}} \\
\hphantom{Y_2(\boldsymbol{t})=\times \sum_{\tau\in S_{\xi-1}'} \sum_{l=\eta+1}^\xi \Biggl[}{}
 \times\Biggl( T_{12} (t_1)\prod_{k=2}^{\eta} T_{12} \bigl(t_{\tau(k)}\bigr) \otimes T_{12}(t_1) \prod_{\substack{m=\eta+1\\m\neq l}}^\xi T_{12} \bigl(t_{\tau(m)}\bigr)\Biggr)\\
\hphantom{Y_2(\boldsymbol{t})=\times \sum_{\tau\in S_{\xi-1}'} \sum_{l=\eta+1}^\xi \Biggl[}{}
 \times\Biggl( \prod_{i=\eta+1}^{\xi } T _{22}\bigl(t_{\tau(i)}\bigr)\otimes T_{11}\bigl(t_{\tau(l)}\bigr)\prod_{j=2}^{\eta} T _{11}\bigl(t_{\tau(j)}\bigr)\Biggr) \Biggr].
\\
Y_3(\boldsymbol{t}) = \sum_{\eta=0}^{\xi-1} \frac{1}{(\xi-\eta-1) ! \eta!} \sum_{\rho\in S_{\xi-1}'} \Biggl[ \prod_{m=2}^{\eta+1 } \frac{t_{\rho(m)}-t_1-1}{t_{\rho(m)}-t_1} \prod_{1< i\leq\eta+1<j \leq \xi} \frac{t_{\rho(i)}-t_{\rho(j)}-1}{t_{\rho(i)}-t_{\rho(j)}} \\
\hphantom{Y_3(\boldsymbol{t}) = \sum_{\eta=0}^{\xi-1} \frac{1}{(\xi-\eta-1) ! \eta!} \sum_{\rho\in S_{\xi-1}'} \Biggl[}{}
 \times\Biggl( \prod_{m=2}^{\eta+1 } T_{12} \bigl(t_{\rho(m)}\bigr) \otimes T_{12}(t_1) \prod_{l=\eta+2}^{\xi } T_{12} \bigl(t_{\rho(l)}\bigr) \Biggr) \\
\hphantom{Y_3(\boldsymbol{t}) = \sum_{\eta=0}^{\xi-1} \frac{1}{(\xi-\eta-1) ! \eta!} \sum_{\rho\in S_{\xi-1}'} \Biggl[}{}
 \times\Biggl(T_{22}(t_1)\prod_{i=\eta+2}^{\xi } T _{22}\bigl(t_{\rho(i)}\bigr)\otimes \prod_{j=2}^{\eta+1} T _{11}\bigl(t_{\rho(j)}\bigr)\Biggr) \Biggr].
\\
Y_4(\boldsymbol{t})= -\sum_{\eta=1}^{\xi-1} \frac{1}{(\xi-\eta-1) ! \eta!} \\
\hphantom{Y_4(\boldsymbol{t})=}{}
 \times\sum_{\rho\in S_{\xi-1}'} \Biggl[ \sum_{k=2}^{\eta+1} \frac{1}{t_1-t_{\rho(k)}} \prod_{1< i\leq\eta+1<j \leq \xi} \frac{t_{\rho(i)}-t_{\rho(j)}-1}{t_{\rho(i)}-t_{\rho(j)}} \prod_{\substack{m=2\\m\neq k}}^{\eta+1} \frac{t_{\rho(k)}-t_{\rho(m)}+1}{t_{\rho(k)}-t_{\rho(m)}} \\
\hphantom{Y_4(\boldsymbol{t})=\times\sum_{\rho\in S_{\xi-1}'} \Biggl[}{}
 \times\Biggl(T_{12}(t_1) \prod_{\substack{m=2\\m\neq k}}^{\eta+1} T_{12} \bigl(t_{\rho(m)}\bigr) \otimes T_{12}(t_1) \prod_{l=\eta+2}^{\xi } T_{12} \bigl(t_{\rho(l)}\bigr) \Biggr) \\
\hphantom{Y_4(\boldsymbol{t})=\times\sum_{\rho\in S_{\xi-1}'} \Biggl[}{}
 \times\Biggl( T_{22}\bigl(t_{\rho(k)}\bigr)\prod_{i=\eta+2}^{\xi } T _{22}\bigl(t_{\rho(i)}\bigr)\otimes \prod_{j=2}^{\eta+1} T _{11}\bigl(t_{\rho(j)}\bigr)\Biggr) \Biggr].
\end{gather*}
To complete the proof, we will show that
\begin{equation}\label{Yt}
Y_1(\boldsymbol{t})+Y_3(\boldsymbol{t})=\sum_{\eta=0}^\xi \frac{1}{(\xi-\eta)!\eta!}\sum_{\rho\in S_\xi}P_{\eta,\xi-\eta}(\boldsymbol{t}^\rho) \qquad \text{and} \qquad Y_2(\boldsymbol{t})+Y_4(\boldsymbol{t})=0.
\end{equation}
We will start with the first equality in~\eqref{Yt}.

Observe that
\begin{equation*}
Y_1(\boldsymbol{t})= \sum_{\eta=1}^\xi \frac{1}{(\xi-\eta) ! (\eta-1)!} \sum_{\substack{\sigma\in S_\xi\\\sigma (1)=1}} P_{\eta,\xi-\eta}(\boldsymbol{t}^\sigma),
\end{equation*}
and
\begin{equation*}
Y_3(\boldsymbol{t})= \sum_{\eta=0}^{\xi-1} \frac{1}{(\xi-\eta-1) ! \eta !} \sum_{\substack{\sigma\in S_\xi\\\sigma (\eta+1)=1}}P_{\eta,\xi-\eta}(\boldsymbol{t}^\sigma).
\end{equation*}
On the other hand, we have
\begin{align*}
\sum_{\eta=0}^\xi \frac{1}{(\xi-\eta)!\eta!}\sum_{\sigma\in S_\xi} P_{\eta,\xi-\eta}(\boldsymbol{t}^\sigma)={}&\sum_{\eta=1}^\xi \frac{1}{(\xi-\eta)!\eta!} \sum_{\substack{\sigma\in S_\xi\\\sigma^{-1}(1)\in\{1,\dots,\eta\}}} P_{\eta,\xi-\eta}(\boldsymbol{t}^\sigma)\\
&{}{+} \sum_{\eta=0}^{\xi-1} \frac{1}{(\xi-\eta)!\eta!}\sum_{\substack{\sigma\in S_\xi\\\sigma^{-1}(1)\in\{\eta+1,\dots,\xi\}}}P_{\eta,\xi-\eta}(\boldsymbol{t}^\sigma).
\end{align*}
Denote by $s_{a,b}\in S_\xi$ the transposition of $a$ and $b$. Then we have
\begin{equation*}
\sum_{\substack{\sigma\in S_\xi\\\sigma^{-1}(1)\in\{1,\dots,\eta\}}} P_{\eta,\xi-\eta}(\boldsymbol{t}^\sigma) =\sum_{l=1}^\eta \sum_{\substack{\tau\in S_\xi,\\ \tau(1)=1}} P_{\eta,\xi-\eta}(\boldsymbol{t}^{\tau s_{1,l}})= \eta \sum_{\substack{\tau\in S_\xi,\\ \tau(1)=1}} P_{\eta,\xi-\eta}(\boldsymbol{t}^\tau).
\end{equation*}
For the first step, we used $l=\sigma^{-1}(1)$ and $\tau=\sigma s_{1,\sigma^{-1}(1)}$, so that $\tau(1)=1$. For the second step, we used the equality $P_{\eta,\xi-\eta}(\boldsymbol{t}^{\tau s_{1,l}})=P_{\eta,\xi-\eta}(\boldsymbol{t}^\tau)$.
Similarly,
\begin{equation*}
\sum_{\substack{\sigma\in S_\xi\\\sigma^{-1}(1)\in\{\eta+1,\dots,\xi\}}}P_{\eta,\xi-\eta}(\boldsymbol{t}^\sigma)=(\xi-\eta) \sum_{\substack{\rho\in S_\xi,\\ \rho(\eta+1)=1}} P_{\eta,\xi-\eta}(\boldsymbol{t}^\rho).
\end{equation*}
Therefore,
\begin{align*}
\sum_{\eta=0}^\xi \frac{1}{(\xi-\eta)!\eta!}\sum_{\sigma\in S_\xi} P_{\eta,\xi-\eta}(\boldsymbol{t}^\sigma)={}&\sum_{\eta=1}^\xi \frac{1}{(\xi-\eta)!(\eta-1)!} \sum_{\substack{\sigma\in S_\xi\\\sigma(1)=1}} P_{\eta,\xi-\eta}(\boldsymbol{t}^\sigma) \\
&{}{+} \sum_{\eta=0}^{\xi-1} \frac{1}{(\xi-1-\eta)!\eta!}\sum_{\substack{\sigma\in S_\xi\\\sigma(\eta+1)=1}}P_{\eta,\xi-\eta}(\boldsymbol{t}^\sigma)
\\
={}& Y_1(\boldsymbol{t})+Y_3(\boldsymbol{t}).
\end{align*}

Finally, we show that $Y_2(\boldsymbol{t})+Y_4(\boldsymbol{t})=0$.
Observe that $Y_2(\boldsymbol{t})$ can be written as
\begin{gather*}
Y_2(\boldsymbol{t})= \sum_{\eta=1}^{\xi-1} \frac{1}{(\xi-\eta) ! (\eta-1)!}\\
\hphantom{Y_2(\boldsymbol{t})=}{}
 \times\sum_{l=\eta+1}^\xi \sum_{\tau\in S_{\xi-1}'} \Biggl[ \frac{1}{t_1-t_{\tau(l)}} \prod_{1< i\leq\eta<j \leq \xi} \frac{t_{\tau(i)}-t_{\tau(j)}-1}{t_{\tau(i)}-t_{\tau(j)}} \prod_{\substack{m=\eta+1\\m\neq l}}^\xi \frac{t_{\tau(l)}-t_{\tau(m)}-1}{t_{\tau(l)}-t_{\tau(m)}}\\
\hphantom{Y_2(\boldsymbol{t})=\times\sum_{l=\eta+1}^\xi \sum_{\tau\in S_{\xi-1}'} \Biggl[}{}
\times\Biggl( T_{12} (t_1)\prod_{k=2}^{\eta} T_{12} \bigl(t_{\tau(k)}\bigr) \otimes T_{12}(t_1) \prod_{\substack{m=\eta+1\\m\neq l}}^\xi T_{12} \bigl(t_{\tau(m)}\bigr)\Biggr)\\
\hphantom{Y_2(\boldsymbol{t})=\times\sum_{l=\eta+1}^\xi \sum_{\tau\in S_{\xi-1}'} \Biggl[}{}
\times \Biggl( \prod_{i=\eta+1}^{\xi } T _{22}\bigl(t_{\tau(i)}\bigr)\otimes T_{11}(t_{\tau(l)})\prod_{j=2}^{\eta} T _{11}\bigl(t_{\tau(j)}\bigr)\Biggr) \Biggr].
\end{gather*}
Changing the summation variable in the inner sum, $\tau=\sigma s_{l,\eta+1}$, we obtain that
\begin{gather*}
\begin{split}
&Y_2(\boldsymbol{t})= \sum_{\eta=1}^{\xi-1} \frac{1}{(\xi-\eta) ! (\eta-1)!}\\
&\hphantom{Y_2(\boldsymbol{t})=}{}
 \times\sum_{l=\eta+1}^\xi \sum_{\sigma\in S_{\xi-1}'} \Biggl[ \frac{1}{t_1-t_{\sigma(\eta+1)}} \prod_{m=\eta+2}^\xi \frac{t_{\sigma(\eta+1)}-t_{\sigma(m)}-1}{t_{\sigma(\eta+1)}-t_{\sigma(m)}} \prod_{1< i\leq\eta<j \leq \xi} \frac{t_{\sigma(i)}-t_{\sigma(j)}-1}{t_{\sigma(i)}-t_{\sigma(j)}} \\
&\hphantom{Y_2(\boldsymbol{t})=\times\sum_{l=\eta+1}^\xi \sum_{\sigma\in S_{\xi-1}'} \Biggl[}{}
\times\Biggl( T_{12} (t_1)\prod_{k=2}^{\eta} T_{12} \bigl(t_{\sigma(k)}\bigr) \otimes T_{12}(t_1) \prod_{m=\eta+2}^\xi T_{12} \bigl(t_{\sigma(m)}\bigr)\Biggr)\\
&\hphantom{Y_2(\boldsymbol{t})=\times\sum_{l=\eta+1}^\xi \sum_{\sigma\in S_{\xi-1}'} \Biggl[}{}
 \times\Biggl( \prod_{i=\eta+1}^{\xi } T _{22}\bigl(t_{\sigma(i)}\bigr)\otimes \prod_{j=2}^{\eta+1} T _{11}\bigl(t_{\sigma(j)}\bigr)\Biggr) \Biggr].
 \end{split}
\end{gather*}
The expression under the inner sum over $\sigma$ does not depend on $l$, and after a redistribution of factors, we get
\begin{gather}
Y_2(\boldsymbol{t})= \sum_{\eta=1}^{\xi-1} \frac{1}{(\xi-\eta-1) ! (\eta-1)!} \nonumber\\
\hphantom{Y_2(\boldsymbol{t})=}{}
\times \sum_{\sigma\in S_{\xi-1}'}\Biggl[ \frac{1}{t_1-t_{\sigma(\eta+1)}}\prod_{m=\eta+2}^\xi \frac{t_{\sigma(\eta+1)}-t_{\sigma(m)}-1}{t_{\sigma(\eta+1)}-t_{\sigma(m)}}\prod_{i=2}^\eta \frac{t_{\sigma(i)}-t_{\sigma(\eta+1)}-1}{t_{\sigma(i)}-t_{\sigma(\eta+1)}} \nonumber\\
\hphantom{Y_2(\boldsymbol{t})=\times \sum_{\sigma\in S_{\xi-1}'}\Biggl[}{}
\times\prod_{1< i<\eta+1<j \leq \xi} \frac{t_{\sigma(i)}-t_{\sigma(j)}-1}{t_{\sigma(i)}-t_{\sigma(j)}}\nonumber\\
\hphantom{Y_2(\boldsymbol{t})=\times \sum_{\sigma\in S_{\xi-1}'}\Biggl[}{}
\times \Biggl( T_{12} (t_1)\prod_{k=2}^{\eta} T_{12} \bigl(t_{\sigma(k)}\bigr) \otimes T_{12}(t_1) \prod_{m=\eta+2}^\xi T_{12} \bigl(t_{\sigma(m)}\bigr)\Biggr) \nonumber\\
\hphantom{Y_2(\boldsymbol{t})=\times\sum_{\sigma\in S_{\xi-1}'}\Biggl[}{}
\times\Biggl( \prod_{i=\eta+1}^{\xi } T _{22}\bigl(t_{\sigma(i)}\bigr)\otimes \prod_{j=2}^{\eta+1} T _{11}\bigl(t_{\sigma(j)}\bigr)\Biggr) \Biggr].\label{final2}
\end{gather}
Similarly, $Y_4(\boldsymbol{t})$ can be written as
\begin{gather*}
Y_4(\boldsymbol{t})= \sum_{\eta=1}^{\xi-1} \frac{1}{(\xi-1-\eta) ! \eta!} \\
\hphantom{Y_4(\boldsymbol{t})= }{}
\times\sum_{k=2}^{\eta+1} \sum_{\rho\in S_{\xi-1}'} \Biggl[ \frac{1}{t_1-t_{\rho(k)}} \prod_{1< i\leq\eta+1<j \leq \xi} \frac{t_{\rho(i)}-t_{\rho(j)}-1}{t_{\rho(i)}-t_{\rho(j)}} \prod_{\substack{m=2\\m\neq k}}^{\eta+1} \frac{t_{\rho(k)}-t_{\rho(m)}+1}{t_{\rho(k)}-t_{\rho(m)}} \\
\hphantom{Y_4(\boldsymbol{t})= \times\sum_{k=2}^{\eta+1} \sum_{\rho\in S_{\xi-1}'} \Biggl[}{}
\times\Biggl(T_{12}(t_1) \prod_{\substack{m=2\\m\neq k}}^{\eta+1} T_{12} \bigl(t_{\rho(m)}\bigr) \otimes T_{12}(t_1) \prod_{l=\eta+2}^{\xi } T_{12} \bigl(t_{\rho(l)}\bigr) \Biggr) \\
\hphantom{Y_4(\boldsymbol{t})= \times\sum_{k=2}^{\eta+1} \sum_{\rho\in S_{\xi-1}'} \Biggl[}{}
\times\Biggl( T_{22}\bigl(t_{\rho(k)}\bigr)\prod_{i=\eta+2}^{\xi } T _{22}\bigl(t_{\rho(i)}\bigr)\otimes \prod_{j=2}^{\eta+1} T _{11}\bigl(t_{\rho(j)}\bigr)\Biggr) \Biggr],
\end{gather*}
and changing the summation variable in the inner sum, $\rho=\sigma s_{k,\eta+1}$, we get
\begin{gather*}
 Y_4(\boldsymbol{t})= -\sum_{\eta=1}^{\xi-1} \frac{1}{(\xi-1-\eta) ! \eta!}\\
 \hphantom{Y_4(\boldsymbol{t})=}{}
 \times\sum_{k=2}^{\eta+1}\sum_{\sigma\in S_{\xi-1}'} \Biggl[ \frac{1}{t_1-t_{\sigma(\eta+1)}} \prod_{m=2}^{\eta} \frac{t_{\sigma(\eta+1)}-t_{\sigma(m)}+1}{t_{\sigma(\eta+1)}-t_{\sigma(m)}}\prod_{1< i\leq\eta+1<j \leq \xi} \frac{t_{\sigma(i)}-t_{\sigma(j)}-1}{t_{\sigma(i)}-t_{\sigma(j)}} \\
 \hphantom{Y_4(\boldsymbol{t})=\times\sum_{k=2}^{\eta+1}\sum_{\sigma\in S_{\xi-1}'} \Biggl[}{}
\times\Biggl(T_{12}(t_1) \prod_{m=2}^{\eta} T_{12} \bigl(t_{\sigma(m)}\bigr) \otimes T_{12}(t_1) \prod_{l=\eta+2}^{\xi } T_{12} \bigl(t_{\sigma(l)}\bigr) \Biggr)\\
 \hphantom{Y_4(\boldsymbol{t})=\times\sum_{k=2}^{\eta+1}\sum_{\sigma\in S_{\xi-1}'} \Biggl[}{}
\times \Biggl( \prod_{i=\eta+1}^{\xi } T _{22}\bigl(t_{\sigma(i)}\bigr)\otimes \prod_{j=2}^{\eta+1} T _{11}\bigl(t_{\sigma(j)}\bigr)\Biggr) \Biggr].
\end{gather*}
The expression under the inner sum over $\sigma$ does not depend on $k$, and after a redistribution of factors, we get
\begin{gather}
 Y_4(\boldsymbol{t})= -\sum_{\eta=1}^{\xi-1} \frac{1}{(\xi-\eta-1)!(\eta-1)!}\nonumber\\
 \hphantom{Y_4(\boldsymbol{t})=}{}
 \times\sum_{\sigma\in S_{\xi-1}'} \Biggl[ \frac{1}{t_1-t_{\sigma(\eta+1)}}\prod_{m=2}^{\eta} \frac{t_{\sigma(\eta+1)}-t_{\sigma(m)}+1}{t_{\sigma(\eta+1)}-t_{\sigma(m)}}\prod_{q=\eta+2}^\xi\frac{t_{\sigma(\eta+1)}-t_{\sigma(q)}-1}{t_{\sigma(\eta+1)}-t_{\sigma(q)}}\nonumber\\
 \hphantom{Y_4(\boldsymbol{t})=\times\sum_{\sigma\in S_{\xi-1}'} \Biggl[}{}
 \times\prod_{1< i<\eta+1<j \leq \xi} \frac{t_{\sigma(i)}-t_{\sigma(j)}-1}{t_{\sigma(i)}-t_{\sigma(j)}} \nonumber\\
 \hphantom{Y_4(\boldsymbol{t})=\times\sum_{\sigma\in S_{\xi-1}'} \Biggl[}{}
\times\Biggl(T_{12}(t_1) \prod_{m=2}^{\eta} T_{12} \bigl(t_{\sigma(m)}\bigr) \otimes T_{12}(t_1) \prod_{l=\eta+2}^{\xi } T_{12} \bigl(t_{\sigma(l)}\bigr) \Biggr)\nonumber \\
 \hphantom{Y_4(\boldsymbol{t})=\times\sum_{\sigma\in S_{\xi-1}'} \Biggl[}{}
\times\Biggl( \prod_{i=\eta+1}^{\xi } T _{22}\bigl(t_{\sigma(i)}\bigr)\otimes \prod_{j=2}^{\eta+1} T _{11}\bigl(t_{\sigma(j)}\bigr)\Biggr) \Biggr].\label{final4}
\end{gather}
Formulae~\eqref{final2} and~\eqref{final4} show that $Y_2(\boldsymbol{t})+Y_4(\boldsymbol{t})=0$. This completes the proof of formula~\eqref{state}. Proposition~\ref{twotensorsA} is proved.
\end{proof}

\subsection*{Acknowledgements}
The authors thank the referees for very careful reading of this paper and their valuable
suggestions. The second author is supported in part by Simons Foundation grants 430235, 852996.

\pdfbookmark[1]{References}{ref}
\LastPageEnding


\begin{thebibliography}{99}
\footnotesize\itemsep=0pt

\bibitem{FRV}
Felder G., Rim\'anyi R., Varchenko A., Poincar\'e--{B}irkhoff--{W}itt
 expansions of the canonical elliptic differential form, in Quantum Groups,
 \textit{Contemp. Math.}, Vol.~433,
 \href{https://doi.org/10.1090/conm/433/08327}{American Mathematical Society}, Providence, RI, 2007, 191--208,
 \href{http://arxiv.org/abs/math.RT/0502296}{arXiv:math.RT/0502296}.

\bibitem{KP}
Khoroshkin S., Pakuliak S., Generating series for nested {B}ethe vectors,
 \href{https://doi.org/10.3842/SIGMA.2008.081}{\textit{SIGMA}} \textbf{4}
 (2008), 081, 23~pages,
 \href{http://arxiv.org/abs/0810.3131}{arXiv:0810.3131}.

\bibitem{KPT}
Khoroshkin S., Pakuliak S., Tarasov V., Off-shell {B}ethe vectors and
 {D}rinfeld currents,
 \href{https://doi.org/10.1016/j.geomphys.2007.02.005}{\textit{J.~Geom.
 Phys.}} \textbf{57} (2007), 1713--1732,
 \href{http://arxiv.org/abs/math.QA/0610517}{arXiv:math.QA/0610517}.

\bibitem{Kor}
Korepin V.E., Calculation of norms of {B}ethe wave functions,
 \href{https://doi.org/10.1007/BF01212176}{\textit{Comm. Math. Phys.}}
 \textbf{86} (1982), 391--418.

\bibitem{KT}
Kosmakov M., Tarasov V., New combinatorial formulae for nested {B}ethe
 vectors~{II}, \href{https://doi.org/10.1007/s11005-025-01896-2}{\textit{Lett.
 Math. Phys.}} \textbf{115} (2025), 12, 20~pages,
 \href{http://arxiv.org/abs/2402.15717}{arXiv:2402.15717}.

\bibitem{KulRes83}
Kulish P.P., Reshetikhin N.Yu., Diagonalisation of {${\rm GL}(N)$} invariant
 transfer matrices and quantum {$N$}-wave system ({L}ee model),
 \href{https://doi.org/10.1088/0305-4470/16/16/001}{\textit{J.~Phys.~A}}
 \textbf{16} (1983), L591--L596.

\bibitem{KulRes82}
Kulish P.P., Reshetikhin N.Yu., {${\rm GL}_{3}$}-invariant solutions of the
 {Y}ang--{B}axter equation and associated quantum systems,
 \href{https://doi.org/10.1007/BF01095104}{\textit{J.~Sov. Math.}} \textbf{34}
 (1986), 1948--1971.

\bibitem{MV}
Markov Y., Varchenko A., Hypergeometric solutions of trigonometric
 {K}nizhnik--{Z}amolodchikov equations satisfy dynamical difference equations,
 \href{https://doi.org/10.1006/aima.2001.2027}{\textit{Adv. Math.}}
 \textbf{166} (2002), 100--147.

\bibitem{M}
Matsuo A., An~application of {A}omoto--{G}el'fand hypergeometric functions to
 the {${\rm SU}(n)$} {K}nizhnik--{Z}amolodchikov equation,
 \href{https://doi.org/10.1007/BF02102089}{\textit{Comm. Math. Phys.}}
 \textbf{134} (1990), 65--77.

\bibitem{MTT}
Miwa T., Takeyama Y., Tarasov V., Determinant formula for solutions of the
 quantum {K}nizhnik--{Z}amolodchikov equation associated with {$U_q({\rm
 sl}_n)$} at {$|q|=1$},
 \href{https://doi.org/10.2977/prims/1195143360}{\textit{Publ. Res. Inst.
 Math. Sci.}} \textbf{35} (1999), 871--892,
 \href{http://arxiv.org/abs/math.QA/9812096}{arXiv:math.QA/9812096}.

\bibitem{MTV1}
Mukhin E., Tarasov V., Varchenko A., Bethe eigenvectors of higher transfer
 matrices,
 \href{https://doi.org/10.1088/1742-5468/2006/08/p08002}{\textit{J.~Stat.
 Mech. Theory Exp.}} \textbf{2006} (2006), P08002, 44~pages,
 \href{http://arxiv.org/abs/math.QA/0605015}{arXiv:math.QA/0605015}.

\bibitem{MTV2}
Mukhin E., Tarasov V., Varchenko A., Spaces of quasi-exponentials and
 representations of the {Y}angian {$Y(\mathfrak{gl}_N)$},
 \href{https://doi.org/10.1007/s00031-014-9275-8}{\textit{Transform. Groups}}
 \textbf{19} (2014), 861--885,
 \href{http://arxiv.org/abs/1303.1578}{arXiv:1303.1578}.

\bibitem{OPS}
Oskin A., Pakuliak S., Silantyev A., On the universal weight function for the
 quantum affine algebra {$U_q(\widehat{\mathfrak{gl}}_N)$},
 \href{https://doi.org/10.1090/S1061-0022-2010-01110-5}{\textit{St.~Petersburg
 Math.~J.}} \textbf{21} (2009), 651--680,
 \href{http://arxiv.org/abs/0711.2821}{arXiv:0711.2821}.

\bibitem{RSV}
Rim\'anyi R., Stevens L., Varchenko A., Combinatorics of rational functions and
 {P}oincar\'e--{B}irkhoff--{W}itt expansions of the canonical
 {$U(\mathfrak{n}_-)$}-valued differential form,
 \href{https://doi.org/10.1007/s00026-005-0241-3}{\textit{Ann. Comb.}}
 \textbf{9} (2005), 57--74,
 \href{http://arxiv.org/abs/math.CO/0407101}{arXiv:math.CO/0407101}.

\bibitem{RTV1}
Rim\'anyi R., Tarasov V., Varchenko A., Cohomology classes of conormal bundles
 of {S}chubert varieties and {Y}angian weight functions,
 \href{https://doi.org/10.1007/s00209-014-1295-5}{\textit{Math.~Z.}}
 \textbf{277} (2014), 1085--1104,
 \href{http://arxiv.org/abs/1204.4961}{arXiv:1204.4961}.

\bibitem{RTV2}
Rim\'anyi R., Tarasov V., Varchenko A., Partial flag varieties, stable
 envelopes, and weight functions,
 \href{https://doi.org/10.4171/QT/65}{\textit{Quantum Topol.}} \textbf{6}
 (2015), 333--364, \href{http://arxiv.org/abs/1212.6240}{arXiv:1212.6240}.

\bibitem{RTV3}
Rim\'anyi R., Tarasov V., Varchenko A., Trigonometric weight functions as
 {$K$}-theoretic stable envelope maps for the cotangent bundle of a flag
 variety,
 \href{https://doi.org/10.1016/j.geomphys.2015.04.002}{\textit{J.~Geom.
 Phys.}} \textbf{94} (2015), 81--119,
 \href{http://arxiv.org/abs/1411.0478}{arXiv:1411.0478}.

\bibitem{RTV4}
Rim\'anyi R., Tarasov V., Varchenko A., Elliptic and {$K$}-theoretic stable
 envelopes and {N}ewton polytopes,
 \href{https://doi.org/10.1007/s00029-019-0451-5}{\textit{Selecta Math.
 (N.S.)}} \textbf{25} (2019), 16, 43~pages,
 \href{http://arxiv.org/abs/1705.09344}{arXiv:1705.09344}.

\bibitem{SV1}
Schechtman V., Varchenko A., Hypergeometric solutions of
 {K}nizhnik--{Z}amolodchikov equations,
 \href{https://doi.org/10.1007/BF00626523}{\textit{Lett. Math. Phys.}}
 \textbf{20} (1990), 279--283.

\bibitem{SV2}
Schechtman V., Varchenko A., Arrangements of hyperplanes and {L}ie algebra
 homology, \href{https://doi.org/10.1007/BF01243909}{\textit{Invent. Math.}}
 \textbf{106} (1991), 139--194.

\bibitem{S1}
Slavnov N., Algebraic {B}ethe ansatz,
 \href{http://arxiv.org/abs/1804.07350}{arXiv:1804.07350}.

\bibitem{S2}
Slavnov N., Algebraic {B}ethe ansatz and correlation functions---an advanced
 course, \href{https://doi.org/10.1142/12776}{World Scientific Publishing},
 Hackensack, NJ, 2022.

\bibitem{FT}
Takhtadzhan L.A., Faddeev L.D., The quantum method for the inverse problem and
 the {$XYZ$} {H}eisenberg model,
 \href{https://doi.org/10.1070/RM1979v034n05ABEH003909}{\textit{Russ. Math.
 Surv.}} \textbf{34} (1979), 11--68.

\bibitem{TV3}
Tarasov V., Varchenko A., Selberg-type integrals associated with
 {$\mathfrak{sl}_3$},
 \href{https://doi.org/10.1023/B:MATH.0000010712.67685.9d}{\textit{Lett. Math.
 Phys.}} \textbf{65} (2003), 173--185,
 \href{http://arxiv.org/abs/math.QA/0302148}{arXiv:math.QA/0302148}.

\bibitem{TVC}
Tarasov V., Varchenko A., Combinatorial formulae for nested {B}ethe vectors,
 \href{https://doi.org/10.3842/SIGMA.2013.048}{\textit{SIGMA}} \textbf{9}
 (2013), 048, 28~pages,
 \href{http://arxiv.org/abs/math.QA/0702277}{arXiv:math.QA/0702277}.

\bibitem{TV4}
Tarasov V., Varchenko A., Hypergeometric solutions of the quantum differential
 equation of the cotangent bundle of a partial flag variety,
 \href{https://doi.org/10.2478/s11533-013-0376-8}{\textit{Cent.
 Eur.~J.~Math.}} \textbf{12} (2014), 694--710,
 \href{http://arxiv.org/abs/1301.2705}{arXiv:1301.2705}.

\bibitem{TV5}
Tarasov V., Varchenko A., {$q$}-hypergeometric solutions of quantum
 differential equations, quantum {P}ieri rules, and gamma theorem,
 \href{https://doi.org/10.1016/j.geomphys.2019.04.005}{\textit{J.~Geom.
 Phys.}} \textbf{142} (2019), 179--212,
 \href{http://arxiv.org/abs/1710.03177}{arXiv:1710.03177}.

\bibitem{TV}
Varchenko A., Tarasov V., Jackson integral representations for solutions of the
 {K}nizhnik--{Z}amolodchikov quantum equation, \textit{St.~Petersburg
 Math.~J.} \textbf{6} (1994), 275--313,
 \href{http://arxiv.org/abs/hep-th/9311040}{arXiv:hep-th/9311040}.

\end{thebibliography}
\end{document}